\newif\ifsoda
\title{The Unbearable Hardness of Unknotting\thanks{
AdM is partially
supported by the French ANR project ANR-16-CE40-0009-01 (GATO) and the CNRS PEPS project COMP3D. AdM and MT are
partially supported by the Czech-French collaboration project EMBEDS II (CZ:
7AMB17FR029, FR: 38087RM). This work was partially supported by a grant from
the Simons Foundation (grant number 283495 to Yo'av Rieck).
MT is partially supported 
by the project CE-ITI (GA\v{C}R P202/12/G061) and
by the Charles University projects PRIMUS/17/SCI/3 and UNCE/SCI/004.}}
\def\proof{\@ifnextchar[
  {\@xproof}{\@proof}}
\def\@proof{\trivlist \item[\hskip\labelsep {\em Proof.}  ]
  \ignorespaces }
\def\@xproof[#1]{\trivlist \item[\hskip\labelsep{\em #1.}]
  \ignorespaces }
\author{Arnaud de Mesmay\thanks{Univ. Grenoble Alpes, CNRS, Grenoble INP, GIPSA-lab, 38000 Grenoble, France.}
\and
Yo'av Rieck\thanks{Department of Mathematical Sciences, University of Arkansas Fayetteville, AR 72701, USA.}
\and
Eric Sedgwick\thanks{School of Computing, DePaul University, 243 S.~Wabash Ave, Chicago, IL 60604, USA.}
\and
Martin Tancer\thanks{Department of Applied Mathematics, Charles University, Malostransk\'{e} n\'{a}m. 25, 118~00~~Praha~1, Czech Republic.}}
\author[1]{Arnaud de Mesmay}
\author[2]{Yo'av Rieck}
\author[3]{Eric Sedgwick}
\author[4]{Martin Tancer}
\affil[1]{\small{Univ. Grenoble Alpes, CNRS, Grenoble INP\footnote{Institute of Engineering Univ. Grenoble Alpes}, GIPSA-lab, 38000 Grenoble, France}}
\affil[2]{\small{Department of Mathematical Sciences, University of Arkansas
Fayetteville, AR 72701, USA}}
\affil[3]{\small{School of Computing, DePaul University, 243 S.~Wabash Ave,
Chicago, IL 60604, USA}}
\affil[4]{\small Department
of Applied Mathematics,
Charles University, Malostransk\'{e} n\'{a}m.
25, 118~00~~Praha~1, Czech Republic.}
\date{}
\date{\today}
\newcommand{\NP}{\textbf{NP}}
\newcommand{\DD}{\mathbf D}
\newcommand{\rr}{\mathbf r}
\newcommand{\R}{{\mathbb{R}}}
\newcommand{\Z}{{\mathbb{Z}}}
\newcommand{\Imin}{\ensuremath{\rm{I}^-}}
\newcommand{\Ipl}{\ensuremath{\rm{I}^+}}
\newcommand{\Imove}{\ensuremath{\rm{I}}}
\newcommand{\II}{\ensuremath{\rm{II}}}
\newcommand{\III}{\ensuremath{\rm{III}}}
\newcommand{\IImin}{\ensuremath{\rm{II}^-}}
\newcommand{\IIpl}{\ensuremath{\rm{II}^+}}
\DeclareMathOperator{\TRUE}{TRUE}
\DeclareMathOperator{\FALSE}{FALSE}
\DeclareMathOperator{\reid}{rm} 
\DeclareMathOperator{\defe}{def} 
\DeclareMathOperator{\cross}{cross}
\newtheorem{theorem_soda}{Theorem}[section]
\newtheorem{cor}[theorem_soda]{Corollary}
\newtheorem{notation}[theorem_soda]{Notation}
\newtheorem{example}[theorem_soda]{Example}
\newtheorem{claim}[theorem_soda]{Claim}
\newtheorem{remark}[theorem_soda]{Remark}
\newtheorem{definition}[theorem_soda]{Definition}
\theoremstyle{plain}
\newtheorem{theorem}{Theorem}
\newtheorem{lemma}[theorem]{Lemma}
\newtheorem{claim}{Claim}[theorem]
\newtheorem{obs}[theorem]{Observation}
\theoremstyle{definition}
\newtheorem{definition}[theorem]{Definition}
\theoremstyle{definition}
\newtheorem{remark}[theorem]{Remark}
\let\myinput\input
\newcommand{\includesvg}[1]{%
\myinput{#1.pdf_tex}%
}
\long\def\onefigure#1#2{
\begin{figure*}[tbp]
\begin{center}
#1
\end{center}
\caption{#2}
\end{figure*}
}
\def\immediateFigure#1{%
\smallskip\begin{center}#1\end{center}\smallskip }
\newcommand{\labfig}[2]  
{\onefigure{\mbox{\includegraphics{#1}}}{\label{f:#1} #2} }
\newcommand{\labfigw}[3]  
{\onefigure{\mbox{\includegraphics[width=#2]{#1}}}{\label{f:#1} #3}}
\newcommand{\immfig}[1]  
{\immediateFigure{\mbox{\includegraphics{#1}}}}
\newcommand{\immfigw}[2] 
{\immediateFigure{\mbox{\includegraphics[width=#2]{#1}}}}
\definecolor{orange}{rgb}{1,0.5,0}
\newif\ifcmts
\newcommand{\marrow}{\marginpar{\boldmath$\longleftarrow$}}
\newcommand{\eric}[1]{\ifhmode\newline\fi\marrow \textsf{\textcolor{red}{\bf ERIC:} #1\newline}}
\newcommand{\martin}[1]{\ifhmode\newline\fi\marrow \textsf{\textcolor{magenta}{\bf
MARTIN:} #1\newline}}
\newcommand{\arnaud}[1]{\ifhmode\newline\fi\marrow \textsf{\textcolor{blue}{\bf
ARNAUD:} #1\newline}}
\newcommand{\yoav}[1]{\ifhmode\newline\fi\marrow \textsf{\textcolor{orange}{\bf
YO'AV:} #1\newline}}
\newcommand{\red}[1]{{\textcolor{red}{#1}}}
\newcommand{\blue}[1]{{\textcolor{blue}{#1}}}
\newcommand{\orange}[1]{{\textcolor{orange}{#1}}}
\newcommand{\marrow}{}
\newcommand{\yoav}[1]{}
\newcommand{\eric}[1]{}
\newcommand{\arnaud}[1]{}
\newcommand{\martin}[1]{}
\newcommand{\red}[1]{#1}
\newcommand{\blue}[1]{#1}
\newcommand{\orange}[1]{#1}
\begin{document}

\newif\ifSetUp
\SetUpfalse

\maketitle
\ifsoda
\fancyfoot[R]{\footnotesize{\textbf{Copyright for this paper retained by the
authors.}}}
\else
\thispagestyle{empty}
\fi
\begin{abstract}

We prove that deciding if a diagram of the unknot can be untangled using at most \(k\) Riedemeister moves (where $k$ is part of the input) is \NP-hard.
We also prove that several natural questions regarding links in the
\(3\)-sphere are \NP-hard, including detecting whether a link contains a
trivial sublink with $n$ components, computing the unlinking number of a link,
and computing a variety of link invariants related to four-dimensional topology
(such as the \(4\)-ball Euler characteristic, the slicing number, and the \(4\)-dimensional clasp number).

\end{abstract}

\tableofcontents

\section{Introduction}

\paragraph{Unknot recognition via Reidemeister moves.}
The \em unknot recognition problem \em asks whether a given knot is the unknot.
Decidability of the unknot recognition problem was established by
Haken~\cite{h-tn-61}, and since then several other algorithms were
constructed (see for example the survey of Lackenby~\cite{l-ekt-17}).

One can ask, naively, if one can decide if a given knot diagram represents the
unknot simply by untangling the diagram: trying various Reidemeister moves
until there are no more crossings. A first issue is that one might need to \emph{increase} the number of crossings at some point in this untangling: examples of ``hard unknots'' witnessing this necessity can be found in Kaufman and Lambropoulou~\cite{kl-huct-14} . The problem then, obviously, is knowing when to
stop: if we have not been able to untangle the diagram using so many moves, is
the knot in question necessarily knotted or should we keep on trying? 

In~\cite{hl-nrmnu-01}, Hass and Lagarias gave an explicit (albeit rather large)
bound on the number of Reidemeister moves needed to untangle a diagram of the
unknot. Lackenby~\cite{l-pubrm-15} improved the bound to polynomial thus
showing that the unknot recognition problem is in \NP\ (this was previously proved
in~\cite{hlp-ccklp-99}).  The unknot recognition problem is also in
co-\NP~\cite{l-ecktn-16} (assuming the Generalized Riemann Hypothesis,
this was previously shown by Kuperberg~\cite{Kuperberg}). Thus if the
unknot recognition problem were  \NP-complete (or co-\NP-complete) we would
have that \NP\ and co-\NP\ coincide which is commonly believed not to be the
case.  This suggests that the unknot recognition problem is \em not \em
\NP-hard.

It is therefore natural to ask if there is a way to use Reidemeister moves
leading to a better solution than a generic brute-force search.
Our main result suggests that there may be
serious difficulties in such an approach: given a 3-SAT instance \(\Phi\) we
construct an unknot diagram and a number \(k\), so that the diagram can be
untangled using at most \(k\) Reidemeister moves if and only if \(\Phi\) is
satisfiable.  Hence any algorithm that can calculate the minimal number of
Reidemeister moves needed to untangle unknot diagrams will be robust enough to
tackle any problem in \NP.

The main result of this paper is:

\begin{theorem}
\label{t:main}
  Given an unknot diagram $D$ and an integer $k$,
  deciding if $D$ can be untangled using at most $k$ Reidemeister moves is
  \NP-complete.
\end{theorem}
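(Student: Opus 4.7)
The plan is to establish \NP-completeness by handling membership and hardness separately. Membership in \NP\ is relatively direct: a certificate is a sequence of at most $k$ Reidemeister moves, each local and verifiable in polynomial time. Since each Reidemeister move changes the crossing count by at most $2$, every intermediate diagram has size at most $|D| + 2k$, so the certificate has polynomial total size (encoding $k$ in unary is without loss of generality, as the reduction below will produce a $k$ bounded polynomially in $|\Phi|$).

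For \NP-hardness, I would reduce from 3-SAT. Given a formula $\Phi$ with $n$ variables and $m$ clauses, the plan is to construct an unknot diagram $D_\Phi$ of polynomial size and an integer $k$ such that $D_\Phi$ can be untangled in at most $k$ Reidemeister moves if and only if $\Phi$ is satisfiable. The construction follows a gadget paradigm: a \emph{variable gadget} for each $x_i$ admitting two symmetric cheap untangling strategies of equal cost, corresponding to the two truth values; a \emph{clause gadget} for each $C_j$ connected via long parallel ``wire'' strands to the gadgets of its three literals, designed so that it can be untangled cheaply only when at least one incoming wire carries the ``true'' signal; and a layout placing gadgets far apart so that their local behaviour is topologically isolated. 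Set $k$ equal to the sum of the minimum per-gadget costs, calibrated so that the total budget is met exactly when all clauses can be simultaneously satisfied.

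The forward direction (satisfiable implies $k$ moves suffice) follows by executing in each gadget the strategy corresponding to a satisfying assignment and verifying that the move counts add to $k$. The main obstacle is the converse: if $\Phi$ is unsatisfiable, one must rule out any sequence of at most $k$ Reidemeister moves untangling $D_\Phi$. This is delicate because Reidemeister moves act locally but may interleave across gadgets in unforeseen ways, and globally clever sequences must be excluded. My plan is to define a potential function $\Psi(D)$ on diagrams that decomposes as $\sum_g \psi_g$ over the gadgets $g$, where each $\psi_g$ uses robust combinatorial or topological invariants (crossings lying inside the gadget region, linking numbers of encapsulated sub-loops, writhes of designated sub-arcs), is nonnegative, vanishes only on an untangled gadget, and drops by at most one per Reidemeister move. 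The target is to show that $\Psi(D_\Phi) \ge k + 1$ precisely when $\Phi$ is unsatisfiable. Placing gadgets far apart along long wire segments ensures that a single Reidemeister move can affect only one per-gadget potential at a time, so the per-gadget analysis is truly local, while the wires themselves must be rigid enough that untwisting them contributes its own unavoidable share of moves, preventing ``shortcut'' sequences that bypass a gadget altogether.
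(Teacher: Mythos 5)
Your membership argument is slightly off: the concern with $k$ given in binary is not resolved by pointing to the reduction (which is a hardness statement), but by invoking Lackenby's polynomial upper bound on the number of Reidemeister moves needed to untangle any unknot diagram, which lets one assume without loss of generality that $k$ is polynomial in $|D|$. The paper handles this by citing Lackenby directly. This is minor; the real issue is the hardness direction.

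For hardness, your potential-function plan rests on two premises that fail. First, placing gadgets far apart along long wires does \emph{not} topologically isolate them: diagrams are considered up to isotopy and Reidemeister $\II^+$ moves can bring arbitrarily distant strands into contact, so there is no meaningful notion of ``a move affecting only one gadget region,'' and the wires themselves can be crossed, doubled over, or dragged through other gadgets during an untangling. Second, the ingredients you propose for the per-gadget potentials --- writhe-like quantities, linking numbers of sub-loops, crossings in a region --- are precisely the kind of algebraic invariants the paper explicitly notes (citing Hass--Nowik) are not strong enough for this reduction. The paper's key idea is different and avoids any isolation claim: it works with the \emph{defect} $\defe(D) = 2\reid(D) - \cross(D)$, which is nonnegative and vanishes exactly when only $\II^-$ moves are used, and it tracks each individual crossing of the initial diagram through the entire untangling, assigning it a \emph{weight} according to whether it is ultimately removed by an economical $\II^-$ move (weight $0$), a $\Imove^-$ move (weight $1$), or a wasteful $\II^-$ move (weight $2$), plus $\tfrac{2}{3}$ per $\III$ move touching it. A discharging lemma shows the defect is at least the total weight, and a ``close neighbors'' lemma constrains which pairs of crossings can be economically cancelled against each other given a small weight budget. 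This gives a genuinely local accounting of cost without any geometric separation hypothesis, and is the missing engine that your proposal would need to make the converse direction go through.
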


Lackenby~\cite{l-pubrm-15} proved that the problem above is in \NP, and therefore we only need to show \NP-hardness.

For the reduction in the proof of Theorem~\ref{t:main} we have to construct arbitrarily large diagrams of the unknot.  The difficulty in the proof is to establish tools powerful enough to provide useful lower bounds on the minimal number of Reidemeister moves needed to untangle these diagrams. For instance, the algebraic methods of Hass and Nowik~\cite{hn-udrqnr-10} are not strong enough for our reduction.  It is also quite easy to modify the construction and give more easily lower bounds on the number of Reidemeister moves needed to untangle unlinks if one allows the use of arbitrarily many components of diagrams with constant size, but those techniques too cannot be used for Theorem~\ref{t:main}.  We develop the necessary tools in Section~\ref{ss:defect}.

\paragraph{Computational problems for links.} Our approach for proving
Theorem~\ref{t:main} partially builds on techniques to encode satisfiability instances using Hopf links and Borromean rings, that we previously used in~\cite{dmrst-eR3NPh-18} (though the technical details are very different). With these techniques, we also show that a variety of link invariants are \NP-hard to compute.

Precisely, we prove:

\begin{theorem}\label{t:main2}
Given a link diagram $L$ and an integer $k$, the following problems are \NP-hard:
\begin{enumerate}[(a)]
\item deciding whether $L$ admits a trivial unlink with $k$ components as a sublink.
\item deciding whether an intermediate invariant has value $k$ on $L$,
\item deciding whether $\chi_4(L)=0$,
\item deciding whether $L$ admits a smoothly slice sublink with $k$ components.
\end{enumerate}
\end{theorem}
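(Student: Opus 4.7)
All four parts of Theorem~\ref{t:main2} come from a single reduction from 3-SAT, combining Hopf-link ``variable gadgets'' and Borromean/Brunnian ``clause gadgets'' in the spirit of our construction in \cite{dmrst-eR3NPh-18}. Given a 3-SAT instance $\Phi$ with variables $x_1,\ldots,x_n$ and clauses $c_1,\ldots,c_m$, I would build a link diagram $L$ with the following components: for each variable $x_i$, a Hopf-link pair $T_i \cup F_i$ (so at most one of the pair fits in any trivial sublink); for each clause $c_j = \ell_{j,1} \vee \ell_{j,2} \vee \ell_{j,3}$, a component $K_j$ Brunnian-linked (via small auxiliary loops band-summed into the corresponding variable strands) with the three variable components $\overline{\ell_{j,1}}, \overline{\ell_{j,2}}, \overline{\ell_{j,3}}$ representing the negated literals. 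The key designed property is that $K_j$ together with a subset $S$ of variable components forms a trivial link if and only if $S$ fails to contain at least one of the three negated literals of $c_j$.

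\textbf{Part (a).} Set $k := n + m$. For the forward direction, a satisfying assignment dictates which of $\{T_i, F_i\}$ to pick, and since each clause is satisfied, at least one negated literal per clause is absent from the selection, so every $K_j$ can also be added and all Brunnian gadgets unlink. For the reverse direction, a counting argument combined with the Hopf upper bound forces any trivial $k$-sublink to select exactly one variable component per pair and every $K_j$, and the Brunnian constraints then force the induced assignment to satisfy $\Phi$.

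\textbf{Parts (b), (c), (d).} The same link $L$, with the same or a slightly adjusted threshold, encodes the other three problems. For (d), every trivial sublink is smoothly slice, so one direction is immediate; the converse requires slicing obstructions: linking number rules out ``phantom'' slice sublinks that contain a full Hopf pair $T_i \cup F_i$, and higher-order concordance invariants (Milnor $\bar\mu$-invariants, or the multivariable Alexander polynomial) obstruct slicing of the Brunnian clause gadgets whenever the three negated literals are all selected. For (c), using $\chi_4(L) = c(L) - 2g_4(L)$ where $c(L)$ is the component count and $g_4(L)$ the slice genus, I would calibrate the construction (adding auxiliary trivial components if needed) so that $\chi_4(L) = 0$ is equivalent to the existence of a slice sublink of critical size, hence to satisfiability. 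Part (b) then follows since an invariant sandwiched between two invariants we already control inherits the hardness.

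\textbf{Main obstacle.} The principal difficulty is twofold. First, the clause gadgets must act \emph{locally}: gadgets sharing a variable component must not spuriously interact, which forces a careful band-sum attachment of Brunnian auxiliary loops rather than Brunnian-linking the variable components directly. Second, and more delicate, translating combinatorial non-triviality into genuine concordance obstructions for parts (c) and (d) is nontrivial because Brunnian links have all pairwise linking numbers (and signatures) equal to zero, so the obstruction must come from higher-order invariants, and we must check that these survive in the combined link $L$ rather than being killed by the many Hopf and band-sum interactions.
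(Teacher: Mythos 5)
Your reduction for part (a) is essentially the paper's: Hopf pairs for variables and Borromean/Brunnian gadgets for clauses. The difference is cosmetic --- the paper bands the three Borromean rings directly into the three literal strands, so $L_\Phi$ has exactly $2n$ components and $k = n$, whereas you add an extra component $K_j$ per clause and set $k = n + m$; both encodings can be made to work.

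The genuine gap is in parts (b)--(d), where you are missing the paper's central device: replace $L_\Phi$ by its \emph{positive untwisted Whitehead double}. Doubling kills every pairwise linking number while preserving the obstructions, via A.~Levine's theorem that the untwisted positive Whitehead doubles of the Hopf link and of the Borromean rings are not smoothly slice. Your plan to apply Milnor $\bar\mu$-invariants or the multivariable Alexander polynomial directly to the undoubled link runs into exactly what the Whitehead double is designed to circumvent: the raw Hopf pairs have nonzero linking number, so Milnor invariants of mixed sublinks are only defined modulo lower-order terms, and verifying that they remain well-defined and nonzero after all the band-sums is the hard part of the argument, not a routine check as your ``main obstacle'' paragraph suggests.

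There is a second, independent, hole in part (c). Obstructing $\chi_4(L) > 0$ requires an Euler-characteristic lower bound that holds for \emph{every} slice surface, which neither Milnor invariants nor the Alexander polynomial supply --- they only obstruct full sliceness, i.e.\ $\chi_4 = \mu$. The identity $\chi_4(L) = c(L) - 2g_4(L)$ you invoke is incorrect when minimal slice surfaces are disconnected or have pieces with several boundary circles. The paper instead computes the signature of the Whitehead-doubled link (it equals $2n$, from a block-diagonal Seifert matrix built from disjoint doubled Hopf pieces) and feeds it into Murasugi's inequality $|\sigma| \le 2\sum_i g(F_i) + \mu - \nu$ to force $\chi(F) \le 0$ for every slice surface $F$. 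Without some signature-type estimate, ``calibrating the construction by adding trivial components'' provides no mechanism for the direction `$\Phi$ unsatisfiable $\Rightarrow \chi_4 < 0$'.
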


We refer to Definition~\ref{dfn:IntermediateInvariant} for the definitions of  $\chi_4(L)$, the $4$-ball Euler characteristic,  and of intermediate invariants. These are broadly related to the topology of the $4$-ball, and include the unlinking number, the ribbon number, the slicing number, the concordance unlinking number, the concordance ribbon number, the concordance slicing number, and the  \(4\)-dimensional clasp number. See, for example,~\cite{Shibuya} for a discussion of many intermediate invariants.

\paragraph{Related complexity results.} The complexity of computational problems pertaining to knots and links is quite poorly understood. In particular, only very few computational lower bounds are known, and as far as we know, almost none concern classical knots (i.e., knots embedded in ${S}^3$): apart from our Theorem~\ref{t:main}, the only other such hardness proof we know of~\cite{ks-cikloi-18,s-cce3mi-18} concerns counting coloring invariants (i.e., representations of the fundamental group) of knots. More lower bounds are known for classical links. Lackenby~\cite{l-schpl3m-17} showed that determining if a link is a sublink of another one is \NP-hard. Our results strengthen this by showing that even finding an $n$-component unlink as a sublink is already \NP-hard. Agol, Hass and Thurston~\cite{aht-cckgs-06} showed that computing the genus of a knot in a $3$-manifold is \NP-hard, and Lackenby~\cite{l-schpl3m-17} showed that computing the Thurston complexity of a link in ${S}^3$ is also \NP-hard. Our results complement this by showing that the $4$-dimensional version of this problem is also \NP-hard. 

Regarding upper bounds, the current state of knowledge is only slightly better. While, as we mentioned before, it is now known that the unknot recognition problem is in $\NP \cap$ co-\NP, many natural link invariants are not even known to be decidable. In particular, this is the case for all the invariants for which we prove $\NP$-hardness, except for the problem of finding the maximal number of components of a link that form an unlink, which is in $\NP$ (see Theorem~\ref{thm:TirvialSublinkNPcomplete}).

Shortly before we finished our manuscript, Koenig and Tsvietkova
posted a preprint~\cite{kt-NPhnak-18} that also shows that certain computational problems on links are \NP-hard, with
some overlap with the results obtained in this paper (the trivial sublink
problem and the unlinking number). They also show \NP-hardness of computing the number of
Reidemeister moves for getting between two diagrams of the unlink, but their construction does not untangle the diagram and requires arbitrarily
many components. 
Theorem~\ref{t:main} of the current paper is stronger and answers 
Question~17 of~\cite{kt-NPhnak-18}.

\paragraph*{Organization.} This paper is organized as follows. After some
preliminaries in Section~\ref{s:prelim}, we start by proving the hardness of
the trivial sublink problem in Part~\ref{p:sublink} because it is very simple
and provides a good introduction for our other reductions. We then proceed to
prove Theorem~\ref{t:main} in Part~\ref{p:knots} and the hardness of the
unlinking number and the other invariants in Part~\ref{p:links}. 
The three parts are independent and the reader can read any one part
alone.

\section{Preliminaries}\label{s:prelim}

\paragraph{Notation.} Most of the notation we use is standard.  By \em knot \em we mean a tame piecewise linear embedding of the circle \(S^{1}\) into the \(3\)-sphere \(S^{3}\).  By \em link \em we mean a tame, piecewise linear embedding of the disjoint union of any finite number of copies of \(S^{1}\). 
We use interval notation for natural numbers, for example, \([3,5]\) means \(\{3,4,5\}\); we use \([12]\) to indicate \([1,12]\).
We assume basic familiarity with computational complexity and knot theory, and refer to basic textbooks such as Arora and Barak~\cite{ab-ccma-09} for the former and Rolfsen~\cite{rolfsen} for the latter.

\paragraph{Diagram of a knot or a link.}
All the computational problems that we study in this paper take as input the \emph{diagram} of a knot or a link, which we define here.

A diagram of a knot is a piecewise linear map $D\colon S^1 \to \R^2$ in general position; for such a map, every point in $\R^2$ has at most two preimages, and there are finitely many points in $\R^2$ with exactly two preimages (called \em crossing\em).  Locally at crossing 
two arcs cross each other transversely, and the diagram contains the information of which  
arc passes `over' and which `under'. This we usually depict by interrupting the arc that passes under. (A diagram usually arises as a composition of a (piecewise linear) knot $\kappa \colon S^1 \to \R^3$ and a generic projection $\pi \colon \R^3 \to \R^2$ which also induces `over' and `under' information.) We usually identify a diagram $D$ with its image in $\R^2$ together with the information about underpasses/overpasses at crossings; see, for example, Figure~\ref{f:vg}, ignoring the notation on the picture. Diagrams are considered up-to isotopy.

Similarly, a diagram of a link is a piecewise linear map $D \colon \coprod S^1 \to \R^2$ in general position, where $\coprod$ denotes a disjoint union of a finite number of circles $S^1$, and with the same additional information at the crossings.

By an \emph{arc} in the diagram $D$ we mean a set $D(\alpha)$ where $\alpha$
is an arc in $S^1$ (i.e., a subset of $S^1$ homeomorphic to the closed interval).

The size of a knot or a link diagram is its number of crossings plus number of components
of the link. Up to a constant factor, this complexity exactly describes the complexity of encoding the combinatorial information contained in a knot or link diagram.

\paragraph*{$3$-satisfiability.} A formula in \emph{conjunctive normal form} in variables $x_1, \dots, x_n$ is a
Boolean formula of the form $c_1 \wedge c_2 \wedge \cdots \wedge c_m$ where each
$c_i$ is a \emph{clause}, that is, a formula of the form $(\ell_1 \vee \ell_2
\vee \cdots \vee \ell_k)$ where each $\ell_j$ is a \emph{literal}, that is, a
variable $x_t$ or its negation $\neg x_t$. A formula $\Phi$ is \emph{satisfiable} if
there is an assignment to the variables (each variable is assigned $\TRUE$ or
$\FALSE$) such that $\Phi$ evaluates to $\TRUE$ in the given assignment.

A 3-SAT problem is the well-known \NP-hard problem. On input there is a formula
$\Phi$ in conjunctive normal form such that every clause contains
exactly\footnote{Here we adopt a convention from~\cite{papadimitriou94}.
Some other authors define require only `at most three' literals.} three
variables; see, e.g.,~\cite[Proposition 9.2]{papadimitriou94}. 

\part{Trivial sublink}
\label{p:sublink}

Informally, the trivial sublink problem asks, given a link \(L\) and a positive integer \(n\), whether \(L\) admits the \(n\)-component unlink as a sublink.  We define:

\begin{definition}[The Trivial Sublink Problem]
\label{dfn:TrivialSublinkProblem}
An \em unlink, \em or a \em trivial link, \em is a link in \(S^{3}\) whose
components bound disjointly embedded disks.   A \em trivial sublink \em of a
link \(L\) is an unlink formed by a subset of the components of \(L\).  The \em
trivial sublink problem \em  asks, given a link \(L\) and a positive integer
\(n\), whether \(L\) admits an \(n\) component trivial sublink.

\end{definition}

\begin{theorem}
\label{thm:TirvialSublinkNPcomplete}
The trivial sublink problem is \NP-complete. 
\end{theorem}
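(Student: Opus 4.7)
The plan is to prove both directions separately. For membership in \NP, given a link diagram $L$ and integer $n$, a short certificate consists of an $n$-subset $S$ of components of $L$ together with a witness that $S$ forms an unlink; since unlink recognition is in \NP, the whole certificate has polynomial size.

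For \NP-hardness I reduce from $3$-SAT. Given a formula $\Phi$ with variables $x_1,\dots,x_N$ and clauses $c_1,\dots,c_M$ (each containing three literals), I construct a link $L_\Phi$ with $2N+3M$ components: for each variable $x_i$ a Hopf-linked pair $T_i,F_i$ (encoding truth values), and for each clause $c_j=(\ell^{(j)}_1\vee\ell^{(j)}_2\vee\ell^{(j)}_3)$ three components $A_{j,1},A_{j,2},A_{j,3}$ that are mutually pairwise Hopf-linked. Each $A_{j,k}$ is additionally Hopf-linked to the variable component representing the opposite truth value of $\ell^{(j)}_k$: to $F_i$ if $\ell^{(j)}_k=x_i$, and to $T_i$ if $\ell^{(j)}_k=\neg x_i$. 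The link is realized concretely by placing the components as tiny disjoint planar circles and then locally clasping each declared pair, so that the resulting diagram has polynomial size. I set the target to $n:=N+M$.

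The key property of this construction is that a subset of components of $L_\Phi$ forms a trivial sublink if and only if no two components in the subset have been declared Hopf-linked. One direction follows from invariance of the linking number, which equals $\pm 1$ for each Hopf pair. The reverse direction, the main technical point, follows from the local nature of the construction: for an admissible subset, each declared clasp has at most one endpoint inside the subset, so every chosen component can be isotoped back to a round planar circle, yielding a disjoint collection of trivial circles.

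Granted the key property, the equivalence is straightforward. From a satisfying assignment $\sigma$, select $T_i$ or $F_i$ according to $\sigma(x_i)$ and, for each clause $c_j$, one $A_{j,k}$ corresponding to a literal true under $\sigma$; this yields $N+M$ components with no Hopf-linked pair among them. Conversely, a trivial sublink of size $N+M$ contains at most one component per variable pair and per clause triple (since each such pair and triple is internally Hopf-linked), hence exactly one of each; the variable choices define an assignment $\sigma$, and any selected $A_{j,k}$ forbids its Hopf partner from the sublink, forcing $\ell^{(j)}_k$ to be true under $\sigma$ and thereby satisfying $c_j$. The main obstacle is establishing the key property cleanly, since it is the only non-combinatorial input to the reduction.
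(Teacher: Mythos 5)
Your reduction is correct but takes a genuinely different route from the paper's. You encode each clause $c_j$ as a triple of fresh, mutually Hopf-linked components $A_{j,1},A_{j,2},A_{j,3}$, each also clasped to the variable component corresponding to the \emph{negation} of its literal, so that the trivial sublink selects one satisfied literal-witness per clause; the paper instead encodes each clause as a copy of the Borromean rings whose three components are band-summed onto the corresponding literal components, so that $L_{\Phi}$ has only $2n$ components, the target is $n$, and the trivial sublink consists of the \emph{unsatisfied} literals (using the Brunnian property that any two of the three Borromean components form an unlink). Your gadget is combinatorially more explicit but costs $3M$ extra components and a target of $N+M$; the paper's gadget is leaner but relies on the specific behavior of the Borromean rings under component removal. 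The \NP-membership argument via Hass--Lagarias--Pippenger is essentially the same in both.

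There is one genuine gap you should close. Your ``key property'' in the reverse direction --- that any sublink with no declared Hopf pair is actually a \emph{trivial} link, not merely pairwise unlinked --- does not follow automatically from the construction you describe: a priori the clasp arcs emanating from the selected circles toward deleted components could be entangled with one another in a Whitehead- or Borromean-like way even though all pairwise linking numbers vanish. To make the retraction argument go through you must impose (as the paper does for its bands: ``we assume, as we may, that no two bands cross twice'') a no-weaving condition on your clasps: any two clasp arcs should cross at most once, or if they cross several times then one passes entirely over the other. With that explicit, each clasp whose far endpoint has been deleted retracts freely past the other selected circles and clasps, and the selected components return to disjoint round planar circles. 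As stated, this is the step your write-up flags as ``the main obstacle'' but then only gestures at; it is exactly the place where the argument needs an explicit convention.
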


Note that Theorem~\ref{thm:TirvialSublinkNPcomplete} is just a slight
extension of Theorem~\ref{t:main2}(a), claiming also \NP-membership. The
essential part is \NP-hardness.

\begin{proof}
It follows from Hass, Lagarias, and Pippenger~\cite{hlp-ccklp-99} that deciding
if a link is trivial is in \NP.   By adding to their certificate a collection
of \(n\) components of \(L\) we obtain a certificate for the trivial sublink
problem, showing that it is in \NP\
(\NP-membership of the trivial sublink problem was also established, using completely different techniques, by Lachenby~\cite{l-pubrm-15}).
Thus all we need to show is that the
problem is \NP-hard.  We will show this by reducing 3-SAT to the trivial
sublink problem. 

Given a 3-SAT instance \(\Phi\), with \(n\) variables (say \(x_{1},\dots,x_{n}\)) and \(m\) clauses, we construct a diagram \(D_{\Phi}\) as follows (see Figure~\ref{figure:Lphi}):
\begin{figure}
\begin{center}
\def\svgwidth{\textwidth}
\includesvg{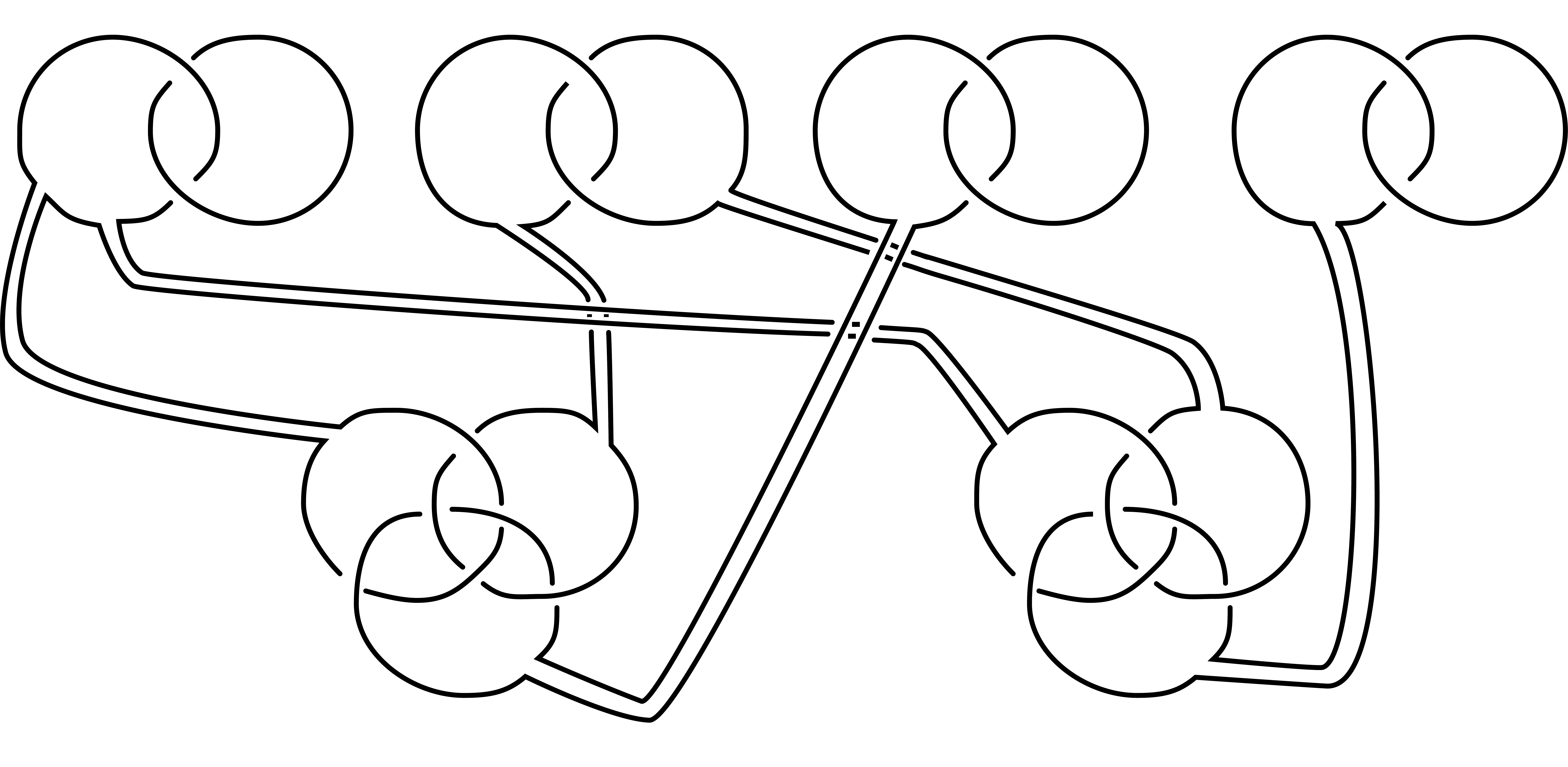}
  \caption{\(D_{\Phi}\) for \(\Phi = (x \vee y \vee z) \wedge (x \vee \neg y \vee t)\))}
\label{figure:Lphi}
\end{center}
\end{figure} 
we first mark \(n+m\) disjoint disks in the plane.  In each of the first \(n\)
disks we draw a diagram of the Hopf link, marking the components in the \(i\)th
disk as \(\kappa_{x_{i}}\) and  \(\kappa_{\neg x_{i}}\).  In the remaining
\(m\) disks we draw diagrams of the Borromean rings and label them according to
the clauses of \(\Phi\).  We now band each component of the Borromean rings to the Hopf link component with the same label.  Whenever two bands cross we have one move over the other (with no ``weaving''); we assume, as we may, that no two bands cross twice.  It is easy to see that this can be done in polynomial time.  The diagram we obtain is \(D_{\Phi}\) and the link it represents is denoted \(L_{\Phi}\) (note that \(L_{\Phi}\) has exactly \(2n\) components).  We complete the proof by showing that \(L_{\Phi}\) admits an \(n\)-component trivial sublink exactly when \(\Phi\) is satisfiable.

\begin{claim}
\label{claim:SatImpliesTrivialUnlink}
If \(\Phi\) is satisfiable then \(L_{\Phi}\) admits an \(n\)-component trivial sublink.
\end{claim}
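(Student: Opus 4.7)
Suppose $\Phi$ is satisfied by an assignment $\tau$. I would select a sublink $S_\tau \subseteq L_\Phi$ as follows: for each variable $x_i$, take the component of $L_\Phi$ containing the Hopf link circle $\kappa_{x_i}$ if $\tau(x_i)=\FALSE$, and the component containing $\kappa_{\neg x_i}$ if $\tau(x_i)=\TRUE$. Equivalently, the selected components are exactly those meeting a Hopf link circle labeled by a \emph{false} literal. This yields $n$ components, and my goal is to show they form an unlink.

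The key observation is that a Borromean ring circle labeled $\ell$ is banded to $\kappa_\ell$, so such a circle belongs to $S_\tau$ precisely when $\tau(\ell)=\FALSE$. Since $\tau$ satisfies every clause, each clause has at least one true literal, so in every Borromean disk at least one of the three circles is \emph{not} in $S_\tau$. By the defining property of the Borromean rings—every proper sublink is trivial—the (at most two) circles remaining in each Borromean disk bound pairwise disjoint disks inside that disk. Likewise, each Hopf circle in $S_\tau$ bounds a disk inside its own Hopf disk, since a single component of a Hopf link is an unknot. Because the $n+m$ Hopf and Borromean disks are pairwise disjoint in the plane, I can arrange all the spanning disks constructed above to be pairwise disjoint inside a thin three-dimensional slab just above the plane; their boundaries are exactly the circles that, once banded, make up the components of $S_\tau$.

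To incorporate the bands, I would thicken each band into a narrow rectangular strip and glue it to the two disks it connects. Topologically, attaching such a strip to two disjoint disks along edge-arcs produces a single larger disk, so after all strips are added one obtains $n$ embedded disks in $S^3$ bounded by the $n$ components of $S_\tau$. The main obstacle is geometric disjointness: two bands can cross in the plane, and so can the corresponding strips. Here the ``no weaving'' hypothesis—at each band-band crossing one band passes entirely over the other—is crucial: it lets me push strips apart in the vertical direction inside the slab, separating them by a sequence of finger moves without disturbing the disks already placed in the Hopf and Borromean regions. Once this is carried out, the resulting $n$ pairwise disjoint embedded disks in $S^3$ certify that $S_\tau$ is an $n$-component unlink, completing the claim.
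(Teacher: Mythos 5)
Your proof is correct and takes essentially the same route as the paper: both select the components labeled by false literals, use satisfiability to guarantee that each Borromean triple loses at least one ring, and invoke the no-weaving hypothesis to unravel the band crossings. Where the paper argues that the remaining diagram retracts by isotopy into the $n$ Hopf regions and is therefore a trivial diagram, you instead build the $n$ spanning disks explicitly from local disks in the Hopf and Borromean regions glued along thickened bands, but the ingredients are identical.
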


Given a satisfying assignment we remove from \(L_{\Phi}\) the components that correspond to satisfied literals, that is, 
if \(x_{i} = \TRUE\) we remove \(\kappa_{x_{i}}\) from \(L_{\Phi}\)
and if \(x_{i} = \FALSE\) we remove \(\kappa_{\neg x_{i}}\) from \(L_{\Phi}\).
We claim that the remaining \(n\) components form an unlink.  To see this,
first note that since the assignment is satisfying, from each copy of the Borromean rings at least one component was removed.  Therefore the rings fall apart and (since we did not allow ``weaving'') the diagram obtained retracts into the first \(n\) disks.  In each of these disks we had, originally, a copy of the Hopf link; by construction exactly one component was removed.  This shows that the link obtained is indeed the \(n\)-component unlink;  Claim~\ref{claim:SatImpliesTrivialUnlink} follows.

\begin{claim}
\label{claim:TrivialUnlinkImpliesSat}
If  \(L_{\Phi}\) admits an \(n\)-component trivial sublink, then \(\Phi\) is satisfiable.
\end{claim}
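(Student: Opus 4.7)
The plan is to extract a satisfying assignment for $\Phi$ from an $n$-component trivial sublink $L' \subseteq L_\Phi$.

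First I would verify that $L'$ contains exactly one component of each Hopf pair $\{\kappa_{x_i}, \kappa_{\neg x_i}\}$ by computing the linking number of the two components of a pair in $L_\Phi$. The Hopf link drawn in disk $i$ contributes $\pm 1$; a Borromean ring attached to $\kappa_{x_i}$ or $\kappa_{\neg x_i}$ links any other Borromean ring trivially (pairwise linking in a Borromean triple is zero, and rings in disjoint planar disks do not cross); and a transparent crossing of two bands contributes $0$, since the two parallel boundary strands of a band run in opposite directions, so the four induced strand crossings carry signs $+,-,-,+$ which cancel. Hence $\lk(\kappa_{x_i}, \kappa_{\neg x_i}) = \pm 1 \neq 0$ in $L_\Phi$. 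Since any two components of an unlink have linking number $0$, at most one member of each Hopf pair lies in $L'$, and the equality $|L'| = n$ forces exactly one.

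I would then define the assignment by $x_i := \TRUE$ iff $\kappa_{\neg x_i} \in L'$, and suppose for contradiction that some clause $c_j = (\ell_1 \vee \ell_2 \vee \ell_3)$ is not satisfied. Then $\kappa_{\ell_1}, \kappa_{\ell_2}, \kappa_{\ell_3}$ all lie in $L'$. Restricting the disjoint spanning disks of $L'$ to just these three components shows that the sub-sublink $L'' := \kappa_{\ell_1} \cup \kappa_{\ell_2} \cup \kappa_{\ell_3}$ is itself a $3$-component unlink.

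The main obstacle is to certify that $L''$ is non-trivial, contradicting the previous paragraph. The invariant I would use is Milnor's triple linking number $\bar\mu_{123}(L'')$, a link-homotopy invariant defined whenever the three pairwise linking numbers vanish; this is the case here by the same band and disjoint-disk computation as in the first step (we may assume without loss of generality that the three literals of $c_j$ involve distinct variables, since any clause with a variable and its negation is tautological and can be discarded in preprocessing). For the classical Borromean rings, $\bar\mu_{123} = \pm 1$. The extra ``tails'' appended to the rings by the bands lie in planar disks disjoint from the Borromean disk $D_j$ of $c_j$, and pairs of tails cross only transparently; furthermore, any Borromean disk of another clause $c_{j'}$ contributes at most two of its rings to $L''$, because otherwise $c_{j'}$ would repeat the three literals of $c_j$ and coincide with it. Hence the non-$c_j$ contributions to $L''$ are either isolated rings or pairs of Borromean-unlinked rings, all removable by a link homotopy, so $L''$ is link-homotopy equivalent to the Borromean rings of $c_j$. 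Therefore $\bar\mu_{123}(L'') = \pm 1 \neq 0$, contradicting that $L''$ is an unlink and completing the reduction.
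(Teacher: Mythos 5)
Your proof is correct and follows the paper's overall plan: use the Hopf pairs to extract an assignment and the Borromean triples to show it is satisfying, with both steps resting on the fact that a sublink of an unlink is an unlink. The difference lies in how the non-triviality of the relevant $2$- and $3$-component sub-sublinks of $\mathcal{U}$ is certified. The paper's proof asserts that $\mathcal{U}$ cannot contain ``the Hopf link'' or ``the Borromean rings'' as a sublink; read literally this requires one to first observe that the sub-sublink $\kappa_{x_i}\cup\kappa_{\neg x_i}$ (resp.\ $\kappa_{\ell_1}\cup\kappa_{\ell_2}\cup\kappa_{\ell_3}$) is isotopic to the Hopf link (resp.\ Borromean rings), which follows from the same ``shrink the freed rings and retract the transparent bands'' argument used for Claim~\ref{claim:SatImpliesTrivialUnlink}. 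You instead avoid that retraction and compute invariants directly on the sub-sublink: $\lk = \pm 1$ for the Hopf pairs and Milnor's $\bar\mu_{123} = \pm 1$ for the Borromean triples. This is a valid and arguably more self-contained route, at the cost of invoking Milnor invariants and of a small amount of care in the link-homotopy reduction (the ``pairs of rings are removable'' step is morally clear but is cleanest if one just counts triple points of Seifert surfaces: the only triple point sits inside the disk of $c_j$). Two minor remarks. First, the bands do not ``lie in planar disks disjoint from $D_j$''; they run between disks, but this does not affect your computation since transparent band crossings contribute nothing to either $\lk$ or a triple-point count. Second, your statement that another clause $c_{j'}$ contributes at most two rings to $L''$ assumes the clauses of $\Phi$ are pairwise distinct as multisets of literals; you can either discard duplicates in preprocessing (as you do for tautological clauses), or note that a duplicate of $c_j$ adds a further $\pm 1$ of the same sign to $\bar\mu_{123}$, which remains nonzero.
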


Suppose that \(L_{\Phi}\) admits an \(n\)-component trivial sublink
\(\mathcal{U}\).   Since \(\mathcal{U}\) itself does not admit the Hopf link as
a sublink, for each \(i\), at most one of \(\kappa_{x_{i}}\) and \(\kappa_{\neg
x_{i}}\) is in \(\mathcal{U}\).  Since \(\mathcal{U}\) has \(n\) components we
see that \em exactly \em one of \(\kappa_{x_{i}}\) and \(\kappa_{\neg x_{i}}\)
is in \(\mathcal{U}\).  If  \(\kappa_{x_{i}}\) is in \(\mathcal{U}\) we set
\(x_{i} = \FALSE\) and if \(\kappa_{\neg x_{i}}\) is in \(\mathcal{U}\) we set
\(x_{i} = \TRUE\).  Now since \(\mathcal{U}\) does not admit the Borromean
rings as a sublink, from each copy of the Borromean rings at least one component is not in \(\mathcal{U}\).  It follows that in each clause of \(\Phi\) at least one literal is satisfied, that is, the assignment satisfies \(\Phi\);
Claim~\ref{claim:TrivialUnlinkImpliesSat} follows.

This completes the proof of Theorem~\ref{thm:TirvialSublinkNPcomplete}.
\end{proof}


\part{The number of Reidemeister moves for untangling}
\label{p:knots}

\section{A restricted form of the satisfiability problem} 
For the proof of Theorem~\ref{t:main}, we will need a slightly restricted form
of the 3-SAT problem given by lemma below.

\begin{lemma}
\label{l:sat_variant}
Deciding whether a formula $\Phi$ in conjunctive normal form is satisfiable
  is \NP-hard even if we assume the following conditions on $\Phi$. 
\begin{itemize}
 \item Each clause contains exactly three literals.
 \item No clause contains both $x$ and $\neg x$ for some variable $x$.
 \item Each pair of literals $\{\ell_1,\ell_2\}$ occurs in at most one
   clause.
\end{itemize}
\end{lemma}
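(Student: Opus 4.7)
The plan is to reduce from ordinary 3-SAT, where each clause consists of exactly three distinct literals. Given such a formula $\Phi$, I will construct in polynomial time an equi-satisfiable formula $\Phi'$ meeting all three conditions. Condition~2 is essentially free: any clause containing both $x$ and $\neg x$ is a tautology and can be deleted without affecting satisfiability, so I may assume $\Phi$ has no tautological clauses. The real work is to enforce condition~3.

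For each clause $c_i = (\ell_{i,1} \vee \ell_{i,2} \vee \ell_{i,3})$ of $\Phi$, I introduce three fresh ``selector'' variables $y_{i,1},y_{i,2},y_{i,3}$ and, for each $j \in \{1,2,3\}$, two further fresh auxiliary variables $p_{i,j}, q_{i,j}$. I replace $c_i$ by the main clause $(y_{i,1} \vee y_{i,2} \vee y_{i,3})$ together with, for each $j$, the three ``implication gadget'' clauses
\[
  (\neg y_{i,j} \vee \ell_{i,j} \vee p_{i,j}),\quad
  (\neg y_{i,j} \vee \neg p_{i,j} \vee q_{i,j}),\quad
  (\ell_{i,j} \vee \neg p_{i,j} \vee \neg q_{i,j}).
\]
A short case analysis shows that these three clauses are jointly satisfiable precisely when $\neg y_{i,j} \vee \ell_{i,j}$ holds, so together they encode the implication $y_{i,j} \Rightarrow \ell_{i,j}$. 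From this the equivalence of satisfiability is immediate: given a satisfying assignment of $\Phi$, I pick one true literal $\ell_{i,j}$ per clause, set the corresponding $y_{i,j}$ to \TRUE\ and the others to \FALSE, and fill in the $p_{i,j}, q_{i,j}$ consistently; conversely, any satisfying assignment of $\Phi'$ must activate some $y_{i,j}$ per main clause, which then forces the corresponding $\ell_{i,j}$ and hence $c_i$.

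It remains to verify condition~3, which is where the main subtlety lies. Within a single gadget attached to $(i,j)$, the three encoding clauses contribute nine pairs of literals, and direct inspection (using that $y_{i,j}, p_{i,j}, q_{i,j}$ and the variable of $\ell_{i,j}$ are pairwise distinct) confirms these nine pairs are pairwise distinct. Between two different gadgets, or between a gadget and a main clause, a shared pair would have to consist of two literals that both appear in both clauses; but in every gadget clause, the literals other than the single occurrence of $\ell_{i,j}$ all involve the fresh variables $y_{i,j}, p_{i,j}, q_{i,j}$, and these by construction do not appear in any other gadget or main clause, so no pair can be shared. The one place care is required is the choice of the asymmetric 2-to-3 padding above: the naive variant $(\neg y_{i,j} \vee \ell_{i,j} \vee p) \wedge (\neg y_{i,j} \vee \ell_{i,j} \vee \neg p)$ would duplicate the pair $\{\neg y_{i,j}, \ell_{i,j}\}$ and immediately violate condition~3, so the three-clause asymmetric form must be used instead. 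The whole construction is plainly polynomial, which completes the plan.
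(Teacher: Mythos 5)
Your construction is correct and it reaches the conclusion by a genuinely different route than the paper. The paper proceeds iteratively: it designs a helper formula $\Psi(t,a,b,c)$ whose satisfying assignments force $t$ to be $\TRUE$, and whenever a pair $\{\ell_1,\ell_2\}$ appears in two clauses $(\ell_1\vee\ell_2\vee\ell_3)$ and $(\ell_1\vee\ell_2\vee\ell_4)$, it replaces them with $(\ell_1\vee\ell_2\vee x)\wedge(\ell_3\vee\neg x\vee\neg t_1)\wedge(\ell_4\vee\neg x\vee\neg t_2)\wedge\Psi(t_1,\dots)\wedge\Psi(t_2,\dots)$, tracking that this strictly decreases the multiplicity of that pair without increasing the multiplicity of any other, and iterating until no collisions remain. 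Your transformation instead rewrites the whole formula in a single pass: each original clause $c_i$ is replaced by a selector clause $(y_{i,1}\vee y_{i,2}\vee y_{i,3})$ on fresh variables, together with three per-literal gadgets that force $y_{i,j}\Rightarrow\ell_{i,j}$, and condition~3 holds globally at once because each gadget clause has at most one literal over an original variable while all other literals use per-$(i,j)$ fresh variables. I checked the gadget case analysis (the three clauses are satisfiable over $p_{i,j},q_{i,j}$ iff $\neg y_{i,j}\vee\ell_{i,j}$ holds) and the pair-distinctness audit, and both go through. The one-shot construction is arguably cleaner and avoids the bookkeeping of the paper's termination argument; the paper's approach has the feature of being a local surgery that leaves untouched clauses untouched. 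Both are plainly polynomial. The only cosmetic remark is that you start from 3-SAT with exactly three distinct literals per clause, which matches the paper's stated Papadimitriou convention, so that is not an issue.
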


\begin{proof}
The first condition says that we consider the 3-SAT problem. Any clause
violating the second condition can be removed from the formula without
affecting satisfiability of the formula as such clause is always satisfied. 
Therefore, it is
  sufficient to provide a recipe to build in polynomial time a formula $\Phi$ satisfying the
  three conditions above out of a formula $\Phi'$ satisfying only the first two
  conditions.

  First, we consider an auxiliary formula 
  $$\Psi = \Psi(t,a,b,c) = (t \vee a \vee \neg b) \wedge
  (t \vee b \vee \neg c) \wedge (t \vee c \vee \neg a) \wedge (a \vee b \vee c)
  \wedge(\neg a \vee \neg b \vee \neg c).$$
We observe that for any satisfying assignment of $\Psi$ we get that $t$ is
  assigned $\TRUE$. Indeed, if $t$ were assigned $\FALSE$ then $(t \vee a \vee
  \neg b) \wedge
    (t \vee b \vee \neg c) \wedge (t \vee c \vee \neg a)$ translates as $(b
    \Rightarrow a) \wedge (c \Rightarrow b) \wedge (a \Rightarrow c)$, that is
    all $a$, $b$, and $c$ are equivalent. However, then $(a \vee b \vee c)
      \wedge(\neg a \vee \neg b \vee \neg c)$ cannot be satisfied.

On the other hand, we also observe that there is a satisfying assignment for
  $\Psi$ where $t$ is assigned $\TRUE$ and, for example, it is sufficient to
  assign $a$ with $\TRUE$, $b$ with $\FALSE$ and $c$ arbitrarily.

Now we return to the formula $\Phi'$ discussed above. 
Suppose there exists a pair  literals $\ell_1$ and $\ell_2$ contained in two clauses of $\Phi'$, say $(\ell_1 \vee \ell_2 \vee \ell_3)$ and $(\ell_1 \vee
  \ell_2 \vee \ell_4)$.  
We replace them with
$$
  (\ell_1 \vee \ell_2 \vee x) \wedge (\ell_3 \vee \neg x \vee \neg t_1) \wedge
  (\ell_4 \vee \neg x \vee \neg t_2) \wedge \Psi(t_1, a_1,b_1,c_1) \wedge \Psi(t_2,
  a_2, b_2, c_2),
$$
where $x, t_1, t_2, a_1, a_2, b_1, b_2, c_1,$ and $c_2$ are newly added
  variables, obtaining a new formula $\Phi''$. We aim to show that $\Phi''$ is
  satisfiable if and only if $\Phi'$ is satisfiable.

Let us first assume that $\Phi'$ is satisfiable and fix a satisfying
  assignment. If $\ell_1 \vee \ell_2$ is $\TRUE$ in this assignment, then we
  may extend it to variables of $\Phi''$ by setting $x$ to $\FALSE$, $t_1$ and
  $t_2$ to $\TRUE$ 
  and $a_1, \dots, c_2$ so that $\Psi(t_1, a_1,b_1,c_1) \wedge  \Psi(t_2, a_2, b_2, c_2)$ is satisfied. 
If $\ell_1 \vee \ell_2$ is $\FALSE$
  in the assignment for $\Phi'$, then $\ell_3$ and $\ell_4$ must be assigned
  $\TRUE$. Then we may extend by setting $x$ to $\TRUE$ and 
 $t_1, t_2, a_1, \dots, c_2$ as before.

  Now let us assume that $\Phi''$ is satisfiable. Then $t_1$ and $t_2$ are set
  to $\TRUE$ due to the properties of $\Psi$. If $x$ is $\TRUE$, then both
  $\ell_3$ and $\ell_4$ must be true and the restriction of the assignment on
  $\Phi''$ to the variables of $\Phi'$ is therefore a satisfying assignment
  for $\Phi'$. Similarly, if $x$ is $\FALSE$, then $\ell_1 \vee \ell_2$ must be
  true and the restriction is again a satisfying assignment for $\Phi'$.

We also observe that in $\Phi''$ we have reduced the number of clauses
  containing the literals $\ell_1$ and $\ell_2$ simultaneously, and for any
  other pair of literals we do not increase the number of clauses containing
  that pair. Therefore, after a polynomial number of steps, when always adding
  new variables, we arrive at a desired formula $\Phi$ satisfying all three conditions.
\end{proof}

\section{The defect}
\label{ss:defect}

\paragraph{Reidemeister moves.}
Reidemeister moves are local modifications of a diagram depicted in
Figure~\ref{f:rm} (the labels at the crossings in a $\III$ move will be
used only later on). We distinguish the $\Imove$ move (left), the $\II$ move (middle) and
the $\III$ move (right). The first two moves affect the number of crossings, thus we
further distinguish the $\Imin$ and the $\IImin$ moves which reduce the number of crossings
from the $\Ipl$ and the $\IIpl$ moves which increase the number of crossings.

\begin{figure}
\begin{center}
  \includegraphics{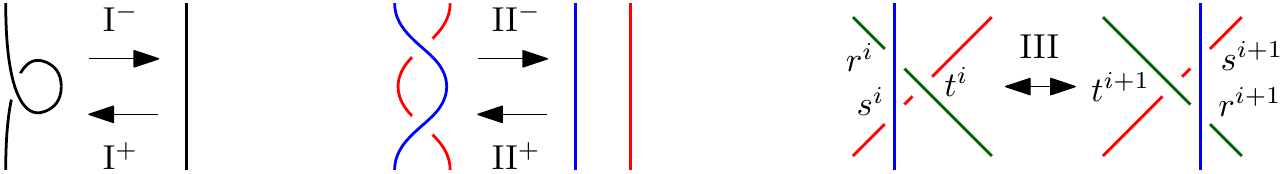}
  \caption{Reidemeister moves} 
\label{f:rm}
\end{center}
\end{figure}

\paragraph{The number of Reidemeister moves for untangling a knot.}
A diagram of an unknot is \emph{untangled} if it does not contain any crossings.
The untangled diagram is denoted by $U$. Given a diagram $D$ of an unknot, 
an \emph{untangling} of $D$ is a sequence $\DD = (D^0, \dots, D^k)$ where $D^0 =
D$, $D^k = U$ (recall that diagrams are only considered up to isotopy) and $D^i$ is obtained from $D^{i-1}$ by a single
Reidemeister move. The number of Reidemeister moves in $\DD$ is denoted by
$\reid(\DD)$, that is, $\reid(\DD) = k$. We also define $\reid(D) := \min
\reid(\DD)$ where the minimum is taken over all untanglings $\DD$ of $D$. 

\paragraph{The defect.} Let us denote by $\cross(D)$ the number of crossings in
$D$. Then the \emph{defect} of an untangling 
$\DD$ is defined by the formula
$$
\defe(\DD) := 2\reid(\DD) - \cross(D).
$$
The \emph{defect} of a diagram $D$ is defined as $\defe(D) := 2\reid(D) -
\cross(D)$. Equivalently, $\defe(D) = \min
\defe(\DD)$ where the minimum is taken over all untanglings $\DD$ of $D$. 

The defect is a convenient way to reparametrize the number of Reidemeister
moves due to the following observation.

\begin{obs}
For any diagram $D$ of the unknot and any untangling $\DD$ of $D$ we have
  $\defe(\DD) \geq 0$. Equality holds if
and only if $\DD$ uses   only $\IImin$ moves.
\end{obs}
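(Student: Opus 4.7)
The plan is a direct counting argument. I would introduce variables $a_+, a_-, b_+, b_-, c$ for the number of $\Ipl$, $\Imin$, $\IIpl$, $\IImin$, $\III$ moves used in $\DD$, respectively, so that $\reid(\DD) = a_+ + a_- + b_+ + b_- + c$.

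Next, I would track the effect of each move on the crossing count: an $\Imove^{\pm}$ move changes $\cross$ by $\pm 1$, an $\II^{\pm}$ move changes it by $\pm 2$, and a $\III$ move leaves $\cross$ unchanged. Since $\DD$ starts at $D$ and ends at $U$ with $\cross(U) = 0$, summing the contributions gives
\[
  -\cross(D) \;=\; (a_+ - a_-) + 2(b_+ - b_-),
\]
which rearranges to $\cross(D) = a_- - a_+ + 2b_- - 2b_+$.

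Substituting into the definition of the defect,
\[
  \defe(\DD) \;=\; 2\reid(\DD) - \cross(D) \;=\; 2(a_+ + a_- + b_+ + b_- + c) - (a_- - a_+ + 2b_- - 2b_+),
\]
and collecting terms yields
\[
  \defe(\DD) \;=\; 3a_+ + a_- + 4b_+ + 2c.
\]
Every coefficient is a nonnegative integer, so $\defe(\DD) \geq 0$, and equality holds precisely when $a_+ = a_- = b_+ = c = 0$, i.e.\ when $\DD$ consists only of $\IImin$ moves.

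There is no real obstacle here; the argument is purely bookkeeping, and the main thing to get right is the signed contribution of each type of Reidemeister move to the crossing count, after which the identity $\defe(\DD) = 3a_+ + a_- + 4b_+ + 2c$ gives both the inequality and the equality condition in one step.
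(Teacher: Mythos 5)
Your proof is correct, and it is essentially the same counting argument as in the paper: both track how each move changes the crossing count and observe that only $\IImin$ removes two crossings per move. Your version simply makes the bookkeeping explicit by deriving the closed-form identity $\defe(\DD) = 3a_+ + a_- + 4b_+ + 2c$, from which the nonnegativity and the equality case fall out immediately; the paper states the same conclusion more tersely without writing out the formula.
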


\begin{proof} Every Reidemeister move in $\DD = (D^0, \dots, D^k)$ removes at most two crossings and the $\IImin$ move is the only move that removes exactly two crossings.
  Therefore, the number of crossings in $D = D^0$ is at most $2k$ and 
  equality holds if and only if every move is a  $\IImin$ move.
\end{proof}

\paragraph{Crossings contributing to the defect.}
Let $\DD = (D^0, \dots, D^k)$ be an untangling of a diagram $D = D^0$ of an unknot. 

Given a crossing $r^i$ in $D^i$, for $0 \leq i \leq k-1$, it may vanish by the
move transforming $D^i$ into $D^{i+1}$ if this is a $\Imin$
or a $\IImin$ move affecting the crossing. In all other cases it
\emph{survives} and we denote by $r^{i+1}$ the corresponding crossing in
$D_{i+1}$.  Note that in the case of a $\III$ move there are three crossings affected by the move and three crossings afterwards. Both before and after,
each crossing is the unique intersection between a pair of the three arcs of
the knot that appear in this portion of the diagram. So we may say that these
three crossings survive the move though they change their actual geometric
positions (they swap the order in which
they occur along each of the three arcs); see Figure~\ref{f:rm}.

With a slight abuse of terminology, by a crossing in $\DD$ we mean a maximal
sequence $\rr
= (r^a, r^{a+1}, \dots, r^b)$ such that $r^{i+1}$ is the crossing in $D^{i+1}$ 
corresponding to $r^i$ in $D^i$ for any $i \in [a,b-1]$. By maximality we mean,
that $r^b$ vanishes after the $(b+1)$st move and either $a = 0$ or $r^a$ is
introduced by the $a$th
Reidemeister move (which must be a $\Ipl$ or $\IIpl$ move). 

An \emph{initial crossing} is a crossing $\rr = (r^0, r^{1}, \dots, r^b)$ in
$\DD$. Initial crossings in $\DD$ are in one-to-one correspondence with
crossings in $D = D^0$. For simplicity of notation, $r^0$ is also denoted $r$
(as a crossing in $D$).

A Reidemeister $\IImin$ move in $\DD$ is \emph{economical}, if both crossings removed by
this move are initial crossings; otherwise, it is \emph{wasteful}.

Let $m_3(\rr)$ be the number of $\III$ moves affecting a crossing $\rr$. 
The \emph{weight} of an initial crossing $\rr$ is defined
in the following way.
$$
w(\rr) = \frac23 m_3(\rr) + 
\left\{
  	\begin{array}{ll}
	  		0  & \mbox{if $\rr$ vanishes by an economical $\IImin$
	move};  \\
	  		1  & \mbox{if $\rr$ vanishes by a $\Imin$ move};  \\
	  		2  & \mbox{if $\rr$ vanishes by a wasteful $\IImin$
      move}.  \\
	  	\end{array}
		\right.
$$

For later purposes, we also define $w(r) := w(\rr)$ and $w(R) := \sum_{r \in R}
w(r)$ for a subset $R$ of the set of all crossings in $D$. 

\begin{lemma}
\label{l:discharging}
  Let $\DD$ be an untangling of a diagram $D$. Then 
  $$
  \defe(\DD) \geq \sum\limits_{\rr} w(\rr),
  $$
  where the sum is over all initial crossings $\rr$ of $\DD$.
\end{lemma}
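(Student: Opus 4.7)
The plan is a bookkeeping/discharging argument obtained by classifying the moves of $\DD$ and further stratifying the moves that remove crossings according to whether the crossings involved are initial or not. Let $N_\Ipl, N_\Imin, N_\IIpl, N_\IImin, N_\III$ denote the numbers of moves of each type in $\DD$. Among the $\Imin$ moves, say $a$ remove an initial crossing and the remaining $a'$ remove a non-initial one, so $N_\Imin = a + a'$. Among the $\IImin$ moves, let $b$ be the number of economical ones, $c_1$ the number of wasteful ones removing exactly one initial crossing, and $c_0$ the number of wasteful ones removing no initial crossing; by definition of ``economical'' these three cases are exhaustive and $N_\IImin = b + c_1 + c_0$. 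Finally, set $S := \sum_\rr m_3(\rr)$, the sum being taken over all initial crossings $\rr$ of $\DD$.

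Two conservation laws and one geometric inequality drive the proof. First, since every initial crossing is eventually removed in $\DD$, I would observe that
\[
\cross(D) = a + 2b + c_1.
\]
Second, since every non-initial crossing is created by exactly one $\Ipl$ or $\IIpl$ move and later destroyed by a $\Imin$ or a $\IImin$ move,
\[
N_\Ipl + 2 N_\IIpl = a' + 2 c_0 + c_1.
\]
The geometric input is that each $\III$ move affects exactly three surviving crossings, so the number of (initial crossing, $\III$ move) incidences obeys $S \leq 3 N_\III$, i.e., $2 N_\III \geq \tfrac{2}{3} S$.

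By direct inspection of the weight formula I get $\sum_\rr w(\rr) = \tfrac{2}{3} S + a + 2 c_1$. Expanding $2\reid(\DD) - \cross(D)$ using the definitions of $N_\Imin$ and $N_\IImin$ and the first conservation law yields, after cancellation,
\[
\defe(\DD) = 2 N_\Ipl + 2 N_\IIpl + 2 N_\III + a + 2 a' + 2 c_0 + c_1.
\]
Subtracting $\sum_\rr w(\rr)$ reduces the desired inequality $\defe(\DD) \geq \sum_\rr w(\rr)$ to
\[
2 N_\Ipl + 2 N_\IIpl + 2 a' + 2 c_0 - c_1 + \bigl(2 N_\III - \tfrac{2}{3} S\bigr) \geq 0.
\]
Using $2 N_\III \geq \tfrac{2}{3} S$ from the geometric inequality and then eliminating $c_1$ via the second conservation law, this collapses to $N_\Ipl + 3 a' + 4 c_0 \geq 0$, which is a tautology.

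The main point requiring care is the classification of moves: in particular verifying that the partition of $\IImin$ moves into $\{b, c_0, c_1\}$ is exhaustive (so that the economical/wasteful dichotomy really captures every case), that a single $\Imin$ or $\IImin$ move cannot remove a crossing which is simultaneously initial and non-initial, and that the ``survival'' of all three crossings across a $\III$ move justifies the bound $S \leq 3 N_\III$. Once this bookkeeping is set up, the rest is pure algebra.
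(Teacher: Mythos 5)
Your argument is correct, and it reaches the same conclusion by a genuinely different route than the paper. The paper uses a \emph{local} discharging argument: each move starts with charge~$2$, each initial crossing with charge~$-1$, each non-initial crossing with charge~$0$, so the total charge equals $\defe(\DD)$; a small set of local redistribution rules (e.g.\ each $\III$ move sends $\tfrac23$ to every crossing it touches, each wasteful $\IImin$ move removing one initial crossing sends it~$3$ and collects~$1$ back from the non-initial crossing it also kills) is then verified object-by-object to leave at least $w(\rr)$ on each initial crossing and $\geq 0$ everywhere else. Your proof instead globalizes this bookkeeping: the two conservation laws you state---$\cross(D)=a+2b+c_1$ for initial crossings and $N_\Ipl+2N_\IIpl=a'+2c_0+c_1$ for non-initial crossings---together with the incidence bound $S\leq 3N_\III$, let you expand $\defe(\DD)-\sum_\rr w(\rr)$ explicitly and watch it collapse to $N_\Ipl+3a'+4c_0+\bigl(2N_\III-\tfrac23 S\bigr)\geq 0$. (I checked the algebra; the expansion of $\defe(\DD)$, the identity $\sum_\rr w(\rr)=\tfrac23 S+a+2c_1$, and the final cancellation are all correct.) Both approaches hinge on the same three observations (exhaustive classification of $\IImin$ moves, double-counting the non-initial crossings against the $\Ipl/\IIpl$ moves that create them, and $S\leq 3N_\III$); the discharging formulation spreads the verification into seven easily-checked local rules, while yours concentrates it into one short algebraic computation and makes the slack term explicit. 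Either is a clean proof; yours is arguably easier to audit at a glance, the paper's is more in the idiom of extremal combinatorics.
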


\begin{proof}
In the proof we use the discharging technique, common in
graph theory.

Let us put charges on crossings in $\DD$ and on Reidemeister moves used in $\DD$.
The initial charge will be 
\begin{itemize}
\item[$2$] on each Reidemeister move;
\item[$-1$] on each initial crossing; and
\item[$0$] on each non-initial crossing.
\end{itemize}

We remark that the sum of the initial charges equals to $\defe(\DD)$ by the
definition of the defect.

Now we start redistributing the charge according to the rules described below. The aim is that, after the redistribution, the charge on each initial crossing will
be at least $w(\rr)$ and it will be at least $0$ on non-initial crossings and on
Reidemeister moves. This will prove the lemma, as the sum of the charges after 
the redistribution is still equal to the defect, whereas it will be at least
the sum of the weights of initial crossings.

We apply the following rules for the redistribution of the charge.

\begin{itemize}
  \item[(R1)] Every $\Ipl$ move sends charge $2$ to the (non-initial) crossing it
  creates.
\item[(R2)] Every $\Imin$ move sends charge $2$ to the crossing it removes.
\item[(R3)] Every $\IIpl$ move sends charge $1$ to each of the two (non-initial)
  crossings it creates.
\item[(R4)] Every economical $\IImin$ move sends charge $1$ to each of the two
  (initial) crossings it removes.
\item[(R5)] Every wasteful $\IImin$ move that removes exactly one initial crossing
  sends charge $3$ to this initial crossing.
\item[(R6)] Every $\III$ move sends $\frac 23$ to every crossing it affects.
\item[(R7)] Every non-initial crossing which is removed by a wasteful $\IImin$ move
  sends charge $1$ to this move.
\end{itemize}

Now it is routine to check that the desired conditions are satisfied, which we now explain. 

Every move has charge at least $0$: The initial charge on moves is $2$. The
rules are set up so that every move distributes charge at most $2$ with
exception of the rule (R5). However, in this case, the wasteful $\IImin$ move
that removes exactly one initial crossing from (R5) gets $1$ charge from rule
(R7).

Every non-initial crossing has charge at least $0$: The initial charge is $0$.
The only rule that depletes the charge is (R7); however in such case, the
charge is replenished by (R1) or (R3).

Every initial crossing $\rr$ has charge equal at least $w(\rr)$: The initial charge
is $-1$. First we observe that (R6) sends the charge $\frac23 m_3(\rr)$ to $\rr$.
If $\rr$ vanishes by an economical $\IImin$ move, it gets additional charge $1$ by
(R4). If $\rr$ vanishes by a $\Imin$ move, it gets additional charge $2$ by (R2).
Finally, if $\rr$ vanishes by a wasteful $\IImin$ move then this move removes
exactly one initial crossing, namely $\rr$. Therefore $\rr$ gets an additional charge
$3$ by (R5).
\end{proof}

We will also need a variant for a previous lemma where we get equality, if we
use the $\Imin$ and $\IImin$ moves only.

\begin{lemma}
\label{l:defect_exact}
  Let $\DD$ be an untangling of a digram $D$ which uses the $\Imin$ and $\IImin$ moves
  only. Then 
  $$
  \defe(\DD) = \sum\limits_{\rr} w(\rr) = \hbox{ number of $\Imin$ moves},
  $$
  where the sum is over all initial crossings $\rr$ of $\DD$.
\end{lemma}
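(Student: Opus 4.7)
The plan is to verify both equalities directly by case analysis, exploiting the restriction that $\DD$ uses only $\Imin$ and $\IImin$ moves.

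First I would observe that under this restriction, no $\Ipl$ or $\IIpl$ moves occur, so no new crossings are ever created during $\DD$. Consequently every crossing appearing in any diagram $D^i$ corresponds to some crossing of $D = D^0$, i.e., every crossing is an initial crossing. This immediately implies that every $\IImin$ move in $\DD$ removes two initial crossings and is therefore economical, so there are no wasteful $\IImin$ moves. Moreover, since $\DD$ contains no $\III$ moves, we have $m_3(\rr) = 0$ for every initial crossing.

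From the definition of $w(\rr)$, the only initial crossings that contribute nonzero weight are those removed by $\Imin$ moves, each contributing weight $1$. Since each $\Imin$ move removes exactly one crossing, this gives
$$
\sum\limits_{\rr} w(\rr) \;=\; \text{number of } \Imin \text{ moves in } \DD,
$$
which establishes the second equality.

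For the first equality, let $a$ and $b$ denote the number of $\Imin$ and $\IImin$ moves in $\DD$, respectively. Then $\reid(\DD) = a+b$. Since every crossing of $D$ must be removed at some point in the course of untangling (the final diagram $U$ has no crossings) and since no new crossings are created, we have $\cross(D) = a + 2b$, with each $\Imin$ move accounting for one crossing of $D$ and each $\IImin$ move for two. Therefore
$$
\defe(\DD) \;=\; 2\reid(\DD) - \cross(D) \;=\; 2(a+b) - (a+2b) \;=\; a,
$$
which matches the count above and completes the proof. No step is a real obstacle; the lemma is essentially a bookkeeping consequence of the definitions once we note that the restriction on moves forces all crossings to be initial and all $\IImin$ moves to be economical.
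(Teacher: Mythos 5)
Your proof is correct and follows essentially the same bookkeeping argument as the paper's, just parameterized by the number of moves $(a,b)$ rather than the number of initial crossings removed by each move type; you are also slightly more explicit in noting that the restriction forces every $\IImin$ move to be economical and every crossing to be initial, which the paper's proof uses implicitly.
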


\begin{proof}
 Let $k_1$ be the number of initial crossings removed by a $\Imin$ move and
 $k_2$ the number of initial crossings removed by a $\IImin$ move. Then
 $\reid(\DD) = k_1 + k_2/2$ and from the definition of the defect we get
 $\defe(\DD) = 2(k_1 + k_2/2) - (k_1 + k_2) = k_1$.  
 It follows directly the definition of the weight that 
 $k_{1} = \sum\limits_{\rr} w(\rr)$. 
\end{proof}

\paragraph{Twins and the preimage of a bigon.}
Let $\rr$ be an initial crossing in an untangling $\DD = (D^0, \dots, D^k)$ removed by an economical
$\IImin$ move. The \emph{twin} of $\rr$, denoted by $t(\rr)$ is the other 
crossing in $\DD$ removed by the same $\IImin$ move. Note that $t(\rr)$ is also an
initial crossing (because of the economical move). We also get $t(t(\rr)) =
\rr$. If $\rr = (r^0, r^1, \dots, r^b)$, then we also extend the definition of
a twin to $D^i$ in such a way that $t(r^i)$ is uniquely defined by $t(\rr) =
(t(r^0), \dots, t(r^b))$. In particular, we will often use a twin $t(r)$
of a crossing $r = r^0$ in $D$ (if it exists).

Furthermore, the crossings $r^b$ and $t(r^b)$ in $D^b$ form a
bigon that is removed by the forthcoming $\IImin$ move.  Let $\alpha^b(r)$
and $\beta^b(r)$ be the two arcs of the bigon (with endpoints $r^b$ and
$t(r^b)$) so that $\alpha^b(r)$ is the arc 
that, after extending slightly,
overpasses  the crossings $r^b$ and $t(r^b)$ whereas a slight extension of
$\beta^b(r)$  underpasses these crossings. (The reader may remember this as
$\alpha$ is `above' and $\beta$ is `below'.)\footnote{Note that $\alpha^b(r)$ and
$\beta^b(r)$ are uniquely defined by $r$ as well as by $\rr$. The choice of $r$
in the notation will be more useful later on.} 
Now we can inductively define arcs
$\alpha^i(r)$ and $\beta^i(r)$ for $i \in [0,b-1]$ so that $\alpha^i(r)$ and
$\beta^i(r)$ are the unique arcs between $r^i$ and $t(r^i)$ which are
transformed to (already defined) $\alpha^{i+1}(r)$ and $\beta^{i+1}(r)$ by the
$i$th Reidemeister move. We also set $\alpha(r) = \alpha^0(r)$ and
$\beta(r) =
\beta^0(r)$. Intuitively, $\alpha(r)$ and $\beta(r)$ form a preimage of the
bigon removed by the $(b+1)$st move and they are called the \emph{preimage
arcs} between $r$ and $t(r)$.

\paragraph{Close neighbors.} 
Let $R$ be a subset of the set of crossings in $D$. Let $r$ and $s$ be any two
crossings in $D$ (not necessarily in $R$) and let $c$ be a non-negative
integer. We say that $r$ and $s$ are
  \emph{$c$-close neighbors with respect to $R$} if $r$ and $s$ can be connected by two arcs $\alpha$ and $\beta$ such that 
  \begin{itemize}
    \item $\alpha$ enters $r$ and $s$ as an overpass;
    \item $\beta$ enters $r$ and $s$ as an underpass;
    \item $\alpha$ and $\beta$ may have self-crossings; however, neither $r$
      nor $s$ is in the interior of $\alpha$ or $\beta$; and
    \item $\alpha$ and $\beta$ together contain at most $c$ crossings from
      $R$ in their interiors. (If there is a crossing in the
      interior of both $\alpha$ and $\beta$, this crossing is counted only
      once.)
  \end{itemize}

\begin{lemma}
\label{l:close}
  Let $R$ be a subset of the set of crossings in $D$, let $c \in \{0,1,2,3\}$.
  Let $r$ be the crossing in $R$ which is the first of the crossings in $R$
  removed by an economical $\IImin$ move (we allow a draw). If $w(R) \leq c$,
  then $r$ and its twin $t(r)$ are $c$-close neighbors with respect to $R$.
\end{lemma}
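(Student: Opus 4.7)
The plan is to take the preimage arcs $\alpha = \alpha(r)$ and $\beta = \beta(r)$, defined just before the lemma, as the arcs witnessing that $r$ and $t(r)$ are $c$-close neighbors with respect to $R$.  The three structural bullets in the definition of $c$-close neighbors (correct endpoints, the over/under entering condition at both endpoints, and absence of $r, t(r)$ from the interiors) I would verify directly from the inductive definition of the preimage arcs, together with the fact that every Reidemeister move preserves the over/under information at every crossing that survives.

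The quantitative content of the lemma is to bound by $c$ the number of crossings of $R$ in the interior of $\alpha \cup \beta$.  For this, I would let $n_i$ denote the number of crossings of $R$ lying in the interior of $\alpha^i(r) \cup \beta^i(r)$ (counted once even if lying on both arcs), and aim to show:
\begin{enumerate}[(i)]
\item $n_b = 0$, since the two sides of the bigon at step $b$ contain no other crossings in their interiors;
\item $n_i$ is non-increasing in $i$;
\item every unit decrease of $n$ can be charged to a distinct crossing of $R$ of weight at least $1$ in $w(R)$.
\end{enumerate}
Combining these three items would yield $n_0 = n_0 - n_b \le w(R) \le c$, which is the bound required.

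To establish (ii) and (iii) I would analyse Reidemeister moves case by case.  The $\Ipl$ and $\IIpl$ moves add only non-initial (hence non-$R$) crossings and so leave $n$ unchanged.  A $\Imin$ move or wasteful $\IImin$ move removes exactly one initial crossing; if this crossing lies in $R$ and on $\alpha^i \cup \beta^i$, then $n$ drops by exactly $1$ while the $R$-crossing acquires weight $1$ (respectively $2$).  By the minimality assumption on $r$, no economical $\IImin$ before step $b+1$ removes a crossing of $R$, so these moves do not affect $n$ either.

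The main obstacle, where care is required, is the $\III$ move.  Inside its local disc the knot consists of three strands; each, viewed as a sub-arc of the underlying circle, belongs to $\alpha^i$, to $\beta^i$, or to neither.  The $\III$ move preserves this assignment as well as the identities of the three crossings of the disc (as intersections between specific pairs of strands), so a crossing of the disc lies on $\alpha^{i+1} \cup \beta^{i+1}$ if and only if it lay on $\alpha^i \cup \beta^i$; hence $n$ is unchanged.  One further subcase to verify is when $r$ or $t(r)$ itself participates in the $\III$ move, in which case the endpoint of $\alpha$ or $\beta$ shifts geometrically but the assignment of strands to $\alpha$, to $\beta$, or to neither, still carries through.
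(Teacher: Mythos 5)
Your choice of witnessing arcs (the preimage arcs $\alpha(r)$, $\beta(r)$) and the structural verification of the first three properties match the paper. However, your quantitative argument has a genuine gap in the $\III$-move case, and the gap is exactly where the lemma's hypothesis $c\le 3$ would have to enter---a hypothesis your argument never uses, which is itself a warning sign.

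Your claim that a $\III$ move leaves $n_i$ unchanged, even ``when $r$ or $t(r)$ itself participates in the $\III$ move,'' is false. The trichotomy ``each strand belongs to $\alpha$, to $\beta$, or to neither'' breaks down precisely when $r^i$ (or $t(r^i)$) is one of the three crossings of the move: the over-strand at $r^i$, say, is then only \emph{partly} contained in $\alpha^i$, with $r^i$ as an endpoint. The other crossing on that strand (the third crossing of the $\III$ triangle) sits on one side of $r^i$ before the move and on the other side after, so it can slide across the endpoint and enter or leave $\alpha^i$. Thus $n_i$ is not monotone, so (ii) fails; and when a $\III$ swap does push a crossing off the arcs, that crossing gains only $\tfrac{2}{3}$ in weight, not the $\ge 1$ your step (iii) charges, so even a salvaged monotonicity would only give $n_0\le\tfrac{3}{2}w(R)$, which is too weak.

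The paper handles this point differently: it does not track a monotone quantity at all, but observes that each of the (supposed) $\ge c+1$ crossings of $R$ interior to $\alpha(r)\cup\beta(r)$ must eventually leave the arcs, that each such exit event (via $\Imin$, wasteful $\IImin$, or a $\III$ swap past $r$ or $t(r)$) contributes weight at least $\tfrac{2}{3}$ to the departing crossing, and---crucially---that any $\III$ swap also contributes $\tfrac{2}{3}$ to $r$ or $t(r)$. The total is then at least $(c+1)\tfrac{2}{3}+\tfrac{2}{3}=\tfrac{2c+4}{3}$, which exceeds $c$ precisely because $c\le 3$. To repair your argument you would need to abandon monotonicity, add the accounting on $r$ and $t(r)$, and invoke $c\le 3$; at that point you have the paper's proof.
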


  \begin{proof}
    Let $\alpha(r)$ and $\beta(r)$ be the preimage arcs between $r$ and $t(r)$.
    We want to verify that they satisfy the properties of the arcs from the
    definition of the close neighbors. The first two properties follow immediately
    from the definition of preimage arcs.

    Next, we want to check that neither $r$ nor $t(r)$ is the interior of
    $\alpha(r)$ or $\beta(r)$. For contradiction, let us assume that this not
    the case. For example, suppose that $r$ also lies  in the interior of  $\alpha(r)$. Let $\rr = (r, r^1, \dots, r^b)$ be the initial
    crossing corresponding to $r$. The $(b+1)$st Reidemeister move in $\DD$
    removes $\rr$. Any preceding Reidemeister move either does not affect $\rr$ at
    all, or it is a $\III$ move swapping the crossing with other crossings. In
    any case, it preserves the self-crossing of $\alpha(r)$ at
    $\rr$. However, this contradicts the fact that $\alpha(r^b)$ is an arc of a
    bigon removed by the $(b+1)$st move.

    In order to check the last property, let us assume that, for contradiction,
    $\alpha(r)$ and $\beta(r)$ together contain at least $c+1$ crossings from
    $R(x)$ in their interiors. These crossings have to be removed from the arcs
    $\alpha(r)$ and $\beta(r)$ until we reach $\alpha(r^b)$ and $\beta(r^b)$.
    They cannot be removed by an economical $\IImin$ move as $r$ is the first
    crossing from $R(x)$ removed by such a move. Thus they have to be removed
    from the arcs either by a $\Imin$ move, a wasteful $\II$ move or a $\III$ move (by swapping with
    $r$ or $t(r)$). This contradicts $w(x) \leq c$. Indeed, if only $\Imin$ and
    $\IImin$ moves are used, we get a total weight at least $c+1$ on the crossings; if at least
    one $\III$ move is used, we get a weight at least $(c+1)\cdot \frac 23$ on the
    crossings and an additional $\frac 23$ on $r$ or $t(r)$. This is in total
    more than $c$ as $c \leq 3$.
  \end{proof}

\section{The reduction}

Let $\Phi$ be a formula in conjunctive normal form satisfying the conditions
stated in Lemma~\ref{l:sat_variant} and let $n$ be the number of variables. Our
aim is to build a diagram $D(\Phi)$ by a polynomial-time algorithm such that
$\defe(D(\Phi)) \leq n$ if and only if $\Phi$ is satisfiable.

\paragraph{The variable gadget.} First we describe the variable gadget. 
For every variable $x$ we consider the diagram depicted at
Figure~\ref{f:vg} and we denote it $V(x)$.

\begin{figure}
\begin{center}
  \includegraphics[page=1]{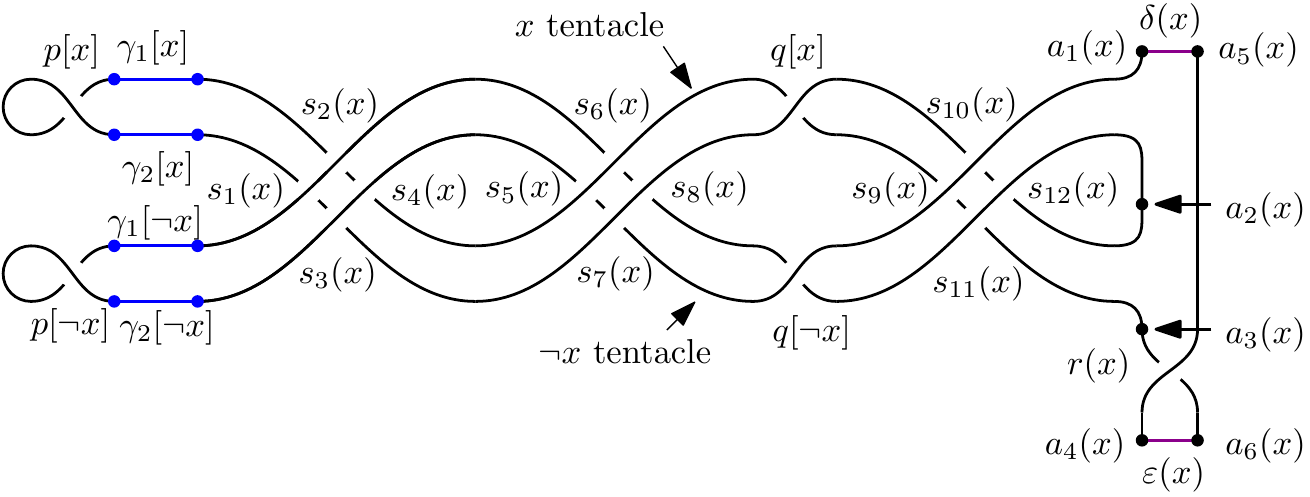}
  \caption{The variable gadget $V(x)$.} 
\label{f:vg}
\end{center}
\end{figure}

The gadget contains $17$ crossings $p[x], p[\neg x], q[x], q[\neg x]$, $r(x)$
and $s_i(x)$ for \(i \in [12]\).
The variable gadget also contains six
distinguished arcs $\gamma_i[x]$ and $\gamma_i[\neg x]$ for $i \in [2]$,
$\delta(x)$ and $\varepsilon(x)$ and six distinguished auxiliary points
$a_1(x), \dots, a_6(x)$ which will be useful later on in order to describe how
the variable gadget is used in the diagram $D(\Phi)$. 

We also call the arc between $a_1(x)$ and $a_2(x)$ which contains $\gamma_1[x]$
and $\gamma_2[x]$ the \emph{$x$ tentacle}, and similarly, the arc between $a_2(x)$ and
$a_3(x)$ which contains $\gamma_1[\neg x]$ and $\gamma_2[\neg x]$ is the
\emph{$\neg x$ tentacle}. Informally, a satisfying assignment to $\Phi$ will
correspond to the choice whether we will decide to remove first the loop at
$p[x]$ by a $\Imin$ move and simplify the $x$ tentacle or whether we remove first
the loop at $p[\neg x]$ and remove the $\neg x$ tentacle in the final
construction of $D(\Phi)$.

We also remark that in the notation, we use  square brackets for objects
that come in pairs and will correspond to a choice of literal $\ell \in \{x, \neg x\}$. This regards $p[\ell]$, $q[\ell]$, $\gamma_1[\ell]$ and $\gamma_2[\ell]$
whereas we use parentheses for the remaining objects.

\paragraph{The clause gadget.}

Given a clause $c = (\ell_1 \vee \ell_2 \vee \ell_3)$ in $\Phi$, the \emph{clause
gadget} is depicted at Figure~\ref{f:clause}.
The construction is based on the Borromean rings. It contains three
pairs of arcs (distinguished by  color) and with a slight abuse of notation,
we refer to 
each of the three pairs of arcs as a ``ring''.   Note that each ring has four pendent endpoints (or leaves) as in the picture.
Each ring corresponds to one of the literals $\ell_1$, $\ell_2$, and $\ell_3$.

\begin{figure}
\begin{center}
  \includegraphics[page=1]{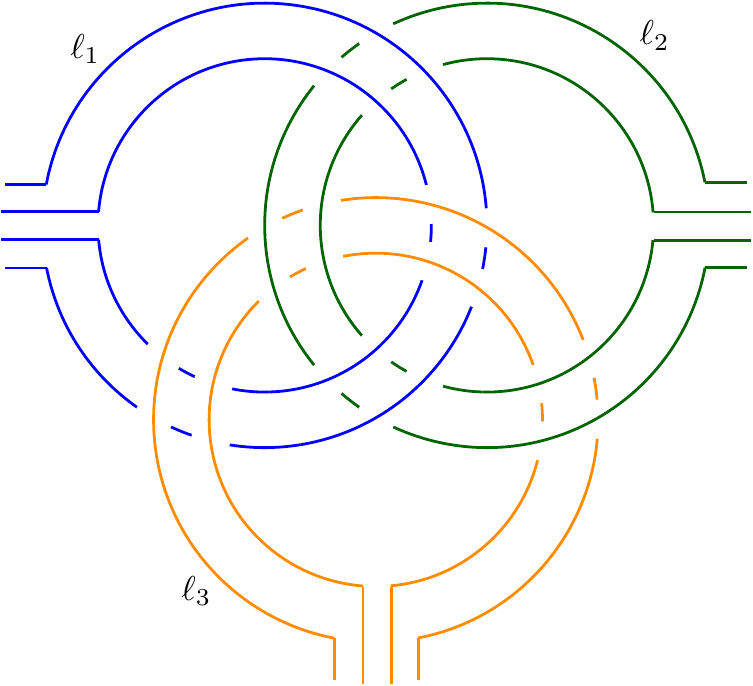}
  \caption{The clause gadget for clause $c = (\ell_1 \vee \ell_2 \vee \ell_3)$.}\label{f:clause}
\end{center}
\end{figure}

\paragraph{A blueprint for the construction.}
Now we build a blueprint for the construction of $D(\Phi)$. Let $x_1, \dots,
x_n$ be the variables of $\Phi$ and let $c_1, \dots, c_m$ be the clauses of
$\Phi$. 

For each clause $c_j = (\ell_1 \vee \ell_2 \vee \ell_3)$ we take a copy of the
graph $K_{1,3}$ (also known as the star with three leaves). We label the
vertices of degree $1$ of such a $K_{1,3}$ by the literals $\ell_1$, $\ell_2$,
and $\ell_3$. Now we draw these stars into the plane sorted along a horizontal
line; see Figure~\ref{f:blueprint}.

\begin{figure}
\begin{center}
  \includegraphics[page=1]{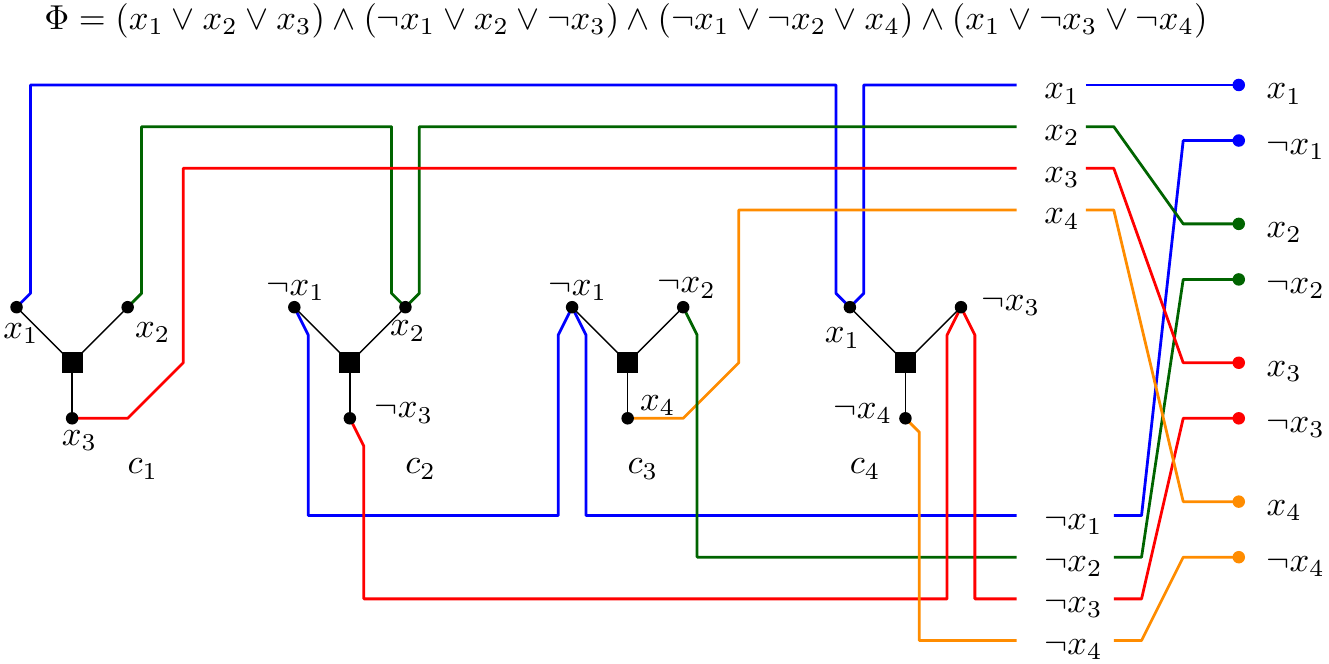}
  \caption{A blueprint for the construction of $D(\Phi)$.} 
\label{f:blueprint}
\end{center}
\end{figure}

Next for each literal $\ell \in \{x_1, \dots, x_n, \neg x_1, \dots, \neg x_n\}$
we draw a piecewise linear segment containing all vertices labelled with that literal according to
the following rules (follow Figure~\ref{f:blueprint}).

\begin{itemize}
\item
The segments start on the right of the graphs $K_{1,3}$ in the top down order $x_1,
\neg x_1$, $x_2, \neg x_2$, $\dots, x_n, \neg x_n$. 
\item
They continue to the left while
we permute them to the order $x_1, \dots, x_n$, $\neg x_1, \dots, \neg x_n$. We
also require that $x_1, \dots, x_n$ occur above the graphs $K_{1,3}$ and $\neg
x_1, \dots, \neg x_n$ occur below these graphs (everything is still on the
right of the graphs). 
\item    
Next, for each literal $\ell$ the segment for $\ell$ continues to
the left while it makes a `detour' to each vertex $v$ labelled $\ell$. 
If $v$ is not the leftmost vertex labelled $\ell$, then the detour
is performed by a `finger' of two parallel lines. We require that the finger
avoids the graphs $K_{1,3}$ except of the vertex $v$. If $v$ is the leftmost
vertex labelled $\ell$, then we perform only a half of the finger so that $v$
becomes the endpoint of the segment.
\end{itemize}

Note that the segments often intersect each other; however, for any $i \in [n]$
the segments for $x_i$ and $\neg x_i$ do not intersect (using the assumption
that no clause contains both $x_i$ and $\neg x_i$).

\paragraph{The final diagram.} Finally, we explain how to build the diagram
$D(\Phi)$ from the blueprint above.

\medskip

Step I (four parallel segments): We replace each segment for a literal $\ell$
with four parallel segments; see Figure~\ref{f:stepI}. The outer two will
correspond to the arc $\gamma_1[\ell]$ from the variable gadget and the inner two
will correspond to $\gamma_2[\ell]$; compare with Figure~\ref{f:vg}.

\begin{figure}
\begin{center}
  \includegraphics[page=2]{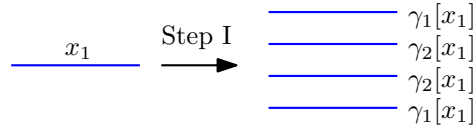}
  \caption{Step I: Replacing segments.} 
\label{f:stepI}
\end{center}
\end{figure}

\medskip

Step II (clause gadgets): We replace each copy of $K_{1,3}$ by a clause gadget for
the corresponding clause $c$; see Figure~\ref{f:stepII}. Now we aim to describe
how is the clause gadget connected to the quadruples of parallel segments
obtained in Step I. Let $v$ be a degree $1$ vertex of the $K_{1,3}$ we are just
replacing. Let $\ell$ be the literal which is the label of this vertex. Then $c$ may or may not be the leftmost clause containing a vertex labelled $\ell$. 

\begin{figure}
\begin{center}
  \includegraphics[page=3]{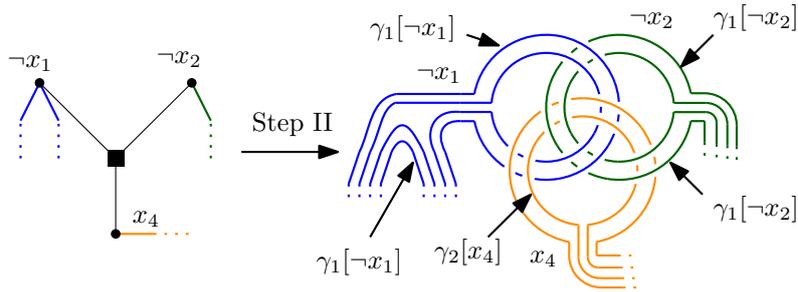}
  \caption{Step II: Replacing the $K_{1,3}$.} 
\label{f:stepII}
\end{center}
\end{figure}

If $c$ is the leftmost clause containing a vertex labelled $\ell$, then there are four parallel
segments for $\ell$ with pendent endpoints (close to the original position of
$v$) obtained in Step I. We connect them to the pendent endpoints of the clause
gadget (on the ring for $\ell$); see $\neg x_2$ and $x_4$ at
Figure~\ref{f:stepII}. Note also that at this moment the two $\gamma_1[\ell]$
arcs introduced in Step I merge as well as the two $\gamma_2[\ell]$ arcs merge.

If $c$ is not the leftmost clause labelled $\ell$ then there are four parallel
segments passing close to $v$ (forming a tip of a finger from the blueprint). We
disconnect the two segments closest to the tip of the finger and connect them to the pendent endpoints of the clause gadget (on the ring for $\ell$); see $\neg x_1$ at
Figure~\ref{f:stepII}.

\medskip

Step III (resolving crossings): If two segments in the blueprint, corresponding
to literals $\ell$ and $\ell'$ have a crossing, Step I blows up such a
crossing into $16$ crossing of corresponding quadruples. We resolve overpasses/underpasses at all these crossings in the same way. That is, one quadruple overpasses the second quadruple at all $16$ crossings; see Figure~\ref{f:stepIII}.

\begin{figure}
\begin{center}
  \includegraphics[page=4]{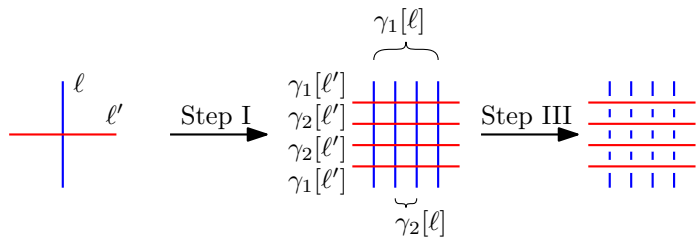}
  \caption{Step III: Resolving crossings.} 
\label{f:stepIII}
\end{center}
\end{figure}

However, we require one additional condition on the choice of
overpasses/underpasses. If $\ell$ and $\ell'$ appear simultaneously in some
clause $c$ we have $8$ crossings on the rings for $\ell$ and $\ell'$ in the
clause gadget for $c$. We can assume that the ring of $\ell$ passes over
the ring of $\ell'$ at all these crossings (otherwise we swap $\ell$ and
$\ell'$). Then for the $16$ crossings on segments for $\ell$ and $\ell'$ we
pick the other option, that is we want that the $\gamma_1[\ell]$ and
$\gamma_2[\ell]$ arcs underpass the $\gamma_1[\ell']$ and $\gamma_2[\ell']$
arcs at these crossings. This is a globally consistent choice because we assume
that there is at most one clause containing both $\ell$ and $\ell'$, this is
the third condition in the statement of Lemma~\ref{l:sat_variant}.

\medskip
STEP IV (the variable gadgets): Now, for every variable $x_i$, the segments
$\gamma_1[x_i], \gamma_2[x_i], \gamma_1[\neg x_i]$ and $\gamma_2[\neg x_i]$ do
not intersect each other. We extend them to a variable gadget as on
Figure~\ref{f:stepIV}. Namely, to the bottom left endpoints  of $\gamma_1[x_i],
\gamma_2[x_i], \gamma_1[\neg x_i]$ and $\gamma_2[\neg x_i]$ we glue the parts
of the variable gadget containing the crossings $p[x_i]$ and $p[\neg x_i]$ and
to the top left endpoints of $\gamma_1[x_i],
\gamma_2[x_i], \gamma_1[\neg x_i]$ and $\gamma_2[\neg x_i]$ we glue the
remainder of the variable gadget. At this moment, we obtain a diagram of a
link, where each link component has a diagram isotopic to the diagram on
Figure~\ref{f:vg}.

\begin{figure}
\begin{center}
  \includegraphics[page=5]{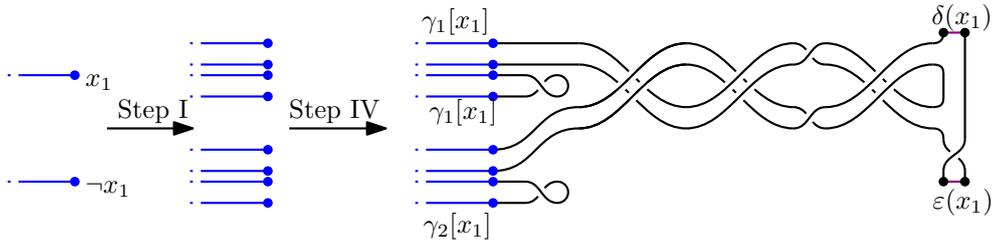}
  \caption{Step IV: Adding the variable gadgets.} 
\label{f:stepIV}
\end{center}
\end{figure}

\medskip
STEP V (interconnecting the variable gadgets): Finally, we form a connected sum of
individual components. Namely, for every $i \in [n-1]$ we perform the knot sum
along the arcs $\delta(x_i)$ and $\varepsilon(x_{i+1})$ by removing them and
identifying $a_4(x_i)$ with $a_1(x_{i+1})$ and $a_6(x_i)$ with $a_5(x_{i+1})$ as on Figure~\ref{f:stepV}. The arcs
$\delta(x_1)$ and $\varepsilon(x_n)$ remain untouched.
This way we obtain the desired diagram $D(\Phi)$; see
Figure~\ref{f:DPhi}.

\begin{figure}
\begin{center}
  \includegraphics[page=6]{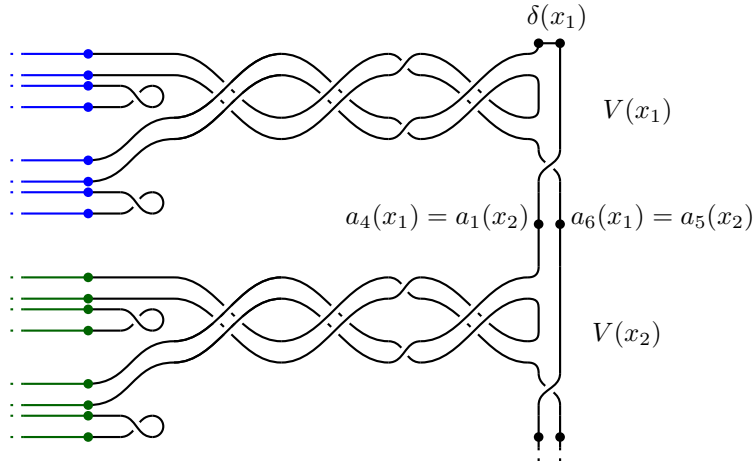}
  \caption{Step V: Interconnecting the variable gadgets.} 
\label{f:stepV}
\end{center}
\end{figure}

\begin{figure}
\begin{center}
  \includegraphics[page=7]{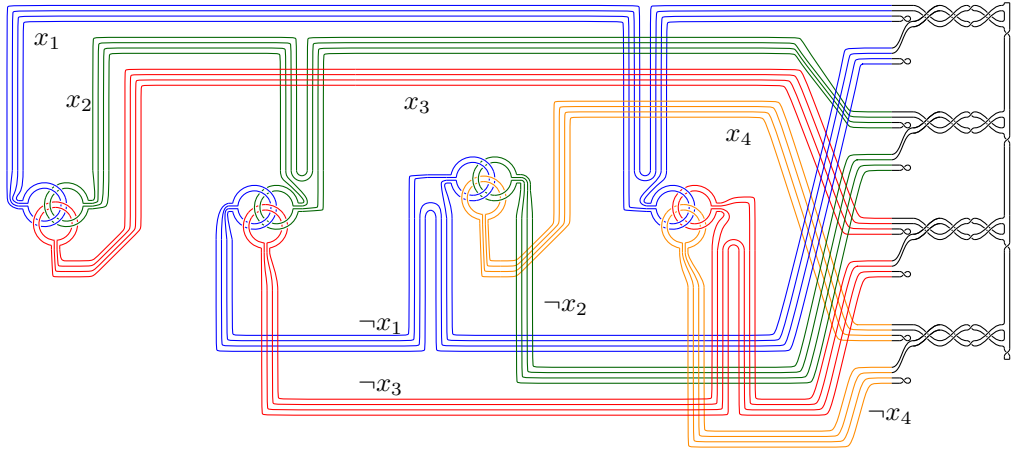}
  \caption{The final construction for the formula $\Phi = (x_1 \vee x_2 \vee x_3)
  \wedge (\neg x_1 \vee x_2 \vee \neg x_3) \wedge (\neg x_1 \vee \neg x_2 \vee x_4)
\wedge (x_1 \vee \neg x_3 \vee \neg x_4)$. For simplicity of the picture, we do
not visualize how the crossings are resolved in Step III. (Unfortunately, we 
cannot avoid tiny pictures of gadgets.)} 
\label{f:DPhi}
\end{center}
\end{figure}

The core of the \NP-hardness reduction is the following theorem.
\begin{theorem}
\label{t:sati_defe}
Let $\Phi$ be a formula in conjunctive normal form satisfying the conditions in
  the statement of Lemma~\ref{l:sat_variant}.
Then $\defe(D(\Phi)) \leq n$ if and only if $\Phi$ is satisfiable.
\end{theorem}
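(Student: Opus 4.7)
I would prove the two implications separately: the backward direction is constructive, while the forward direction relies on the defect/weight machinery of Section~\ref{ss:defect}.

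For the direction ``$\Phi$ satisfiable $\Rightarrow \defe(D(\Phi)) \leq n$'', given a satisfying assignment $\sigma$ I would describe an explicit untangling $\DD$ of $D(\Phi)$. For each variable $x_i$, the assignment $\sigma$ selects a literal $\ell_i \in \{x_i, \neg x_i\}$ set to $\TRUE$; on the variable gadget $V(x_i)$ I apply a single $\Imin$ move to the monogon at $p[\ell_i]$ and then slide the $\ell_i$-tentacle back through every clause gadget it visits, using only $\IImin$ moves. Because $\sigma$ satisfies every clause $c$, in the clause gadget for $c$ at least one ``ring'' has been broken open by the retraction of its tentacle, and the classical fact that removing any single component from the Borromean rings yields a trivial two-component link lets me continue the untangling by $\IImin$ moves only. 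The sequence uses exactly $n$ $\Imin$ moves and no $\Ipl$, $\IIpl$, or $\III$ moves, so by Lemma~\ref{l:defect_exact} we have $\defe(\DD) = n$.

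For the direction ``$\defe(D(\Phi)) \leq n \Rightarrow \Phi$ satisfiable'', I would fix an optimal untangling $\DD$ and associate to each variable $x_i$ a small local witness set $R(x_i) \subseteq V(x_i)$, expected to be $\{p[x_i], p[\neg x_i]\}$ together with a few of the neighbouring crossings $q[x_i], q[\neg x_i], r(x_i)$ needed to rule out cheap economical $\IImin$ moves via their twins. The core combinatorial claim is $w(R(x_i)) \geq 1$ for every $i$: by case analysis, at least one of the two monogon crossings $p[\ell]$ in $V(x_i)$ must either be $\Imin$-removed (weight $1$), participate in a wasteful $\IImin$ (weight $2$), or be shuffled by enough $\III$ moves to contribute weight $\geq 1$; the only remaining escape is that both are removed by economical $\IImin$ moves, which Lemma~\ref{l:close} together with a direct inspection of Figure~\ref{f:vg} rules out because the variable gadget admits no short bigons pairing a $p[\ell]$ with a candidate twin. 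Since the sets $R(x_i)$ are pairwise disjoint, summing and applying Lemma~\ref{l:discharging} yields $\defe(\DD) \geq n$, so equality holds and $w(R(x_i)) = 1$ for each $i$; in particular, in each $V(x_i)$ exactly one of $p[x_i], p[\neg x_i]$ is removed by a $\Imin$ move, which defines a candidate assignment $\sigma$ by setting the corresponding literal to $\TRUE$.

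Finally I would verify that $\sigma$ satisfies every clause by contradiction: if some clause $c = (\ell_1 \vee \ell_2 \vee \ell_3)$ has all literals false, then in each of the three variable gadgets feeding $c$ the $\Imin$ move is performed at $p[\neg \ell_j]$ rather than $p[\ell_j]$, so none of the three ring tentacles of the clause gadget is dispatched ``for free'' by the variable-side budget. The untangling must then resolve the full Borromean configuration of the clause gadget, and I would use Lemma~\ref{l:close} together with the overpass/underpass convention chosen in Step~III and the nontriviality of the Borromean rings to exhibit at least one additional crossing in the clause gadget whose weight is strictly positive and that is not counted in any $R(x_i)$. This extra weight contradicts $\defe(\DD) = n$, so $\sigma$ must satisfy $\Phi$. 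The main obstacle of the entire argument is precisely this last step: propagating the close-neighbour constraints of Lemma~\ref{l:close} through the $16$-fold blown-up crossings of Step~III and through the interlocking of the three rings so as to pinpoint where the extra weight must appear; this is where the specific geometry of the clause gadget and the overpass choice of Step~III are crucially used.
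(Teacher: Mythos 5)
Your plan tracks the paper's proof at the structural level: the forward direction is an explicit untangling with $n$ Reidemeister $\Imin$ moves followed by $\IImin$ moves, invoking Lemma~\ref{l:defect_exact}; the backward direction uses Lemma~\ref{l:discharging} applied to per-variable witness sets, Lemma~\ref{l:close} for the close-neighbour case analysis, the resulting dichotomy $p[x_i]$-versus-$p[\neg x_i]$ to read off an assignment, and a contradiction in the clause gadgets that uses the Step~III overpass convention.

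Two steps, however, would fail as described, and one of them requires an idea you don't have. First, the witness set you propose, $\{p[x_i],p[\neg x_i]\}$ plus ``a few of the neighbouring crossings $q[x_i],q[\neg x_i],r(x_i)$,'' is far too small. The $c$-close-neighbour condition of Lemma~\ref{l:close} only counts internal crossings \emph{belonging to $R$}, so a sparse $R(x_i)$ lets the arcs $\alpha,\beta$ run from $p[x_i]$ far out along the tentacles $\gamma_1[x_i],\gamma_2[x_i]$ with no crossings of $R(x_i)$ in their interiors, and the candidate twin is not pinned down at all. The paper's $R(x)$ consists of $16$ of the $17$ self-crossings of $V(x)$ (all but $r(x)$), and the dense ring of $s_j(x)$'s is what forces the first economical bigon into the $\{s_1,s_2\}/\{s_1,s_3\}$ position with $p[\ell]$ removed by an $\Imin$ move beforehand (Claim~\ref{c:first_move}). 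Second, and more seriously, your clause-gadget contradiction cannot close without the paper's Claim~\ref{c:twins}: that $p[\neg\ell(x)]$ and $q[\neg\ell(x)]$ are themselves twins whose preimage arcs run along $\gamma_1[\neg\ell]$ and $\gamma_2[\neg\ell]$. This fact lets one include $p[\ell_i],q[\ell_i]$ in the clause witness set $R''(c)$ and rule them out as the first economically removed crossing there, while simultaneously ``capping off'' the arcs $\alpha,\beta$ for the remaining candidates $u_1,\dots,u_8$: for $u_1$ this forces $\alpha\subset\gamma_1[\ell_1]$, $\beta\subset\gamma_1[\ell_2]$, at which point the Step~III over/under convention supplies the contradiction. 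Without this twin/preimage-arc structure, the close-neighbour constraint for the Borromean crossings alone gives nothing, since candidate twin arcs escape along the $\gamma$'s unobstructed. You correctly flag the clause step as the main obstacle, but the $p,q$ twin-pair claim is the specific load-bearing lemma you would need to discover and prove.
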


Theorem~\ref{t:main} immediately follows from Theorem~\ref{t:sati_defe} and
Lemma~\ref{l:sat_variant}: 

\begin{proof}[Proof of Theorem~\ref{t:main} modulo Theorem~\ref{t:sati_defe}]
Due to the definition of the defect, the minimum number of Reidemeister moves
required to untangle $D$ equals $\frac12(\defe(D(\Phi)) - \cross(D(\Phi))$. Therefore,
setting $k = \frac12(n + \cross(D(\Phi))$, Theorem~\ref{t:sati_defe} gives that
$D(\Phi)$ can be untangled with at most $k$ moves if and only if $\Phi$ is
satisfiable. This gives the required \NP-hardness via Lemma~\ref{l:sat_variant}.
(Note also, that $D(\Phi)$ and $k$ can be constructed in polynomial time in
the size of $\Phi$.)
\end{proof}

The remainder of this section is devoted to the proof of Theorem~\ref{t:sati_defe}.

\subsection{Satisfiable implies small defect}

The purpose of this subsection is to prove the `if' implication of
Theorem~\ref{t:sati_defe}. That is we are given a satisfiable $\Phi$ and we
aim to show that $\defe(D(\Phi)) \leq n$.

Let us consider a satisfying assignment. For any literal $\ell$ assigned
$\TRUE$
we first remove the loop at the $p[\ell]$ vertex in the variable gadget (see
Figure~\ref{f:vg}) by a $\Imin$ move. This way, we use one $\Imin$ move on each
variable gadget, that is $n$ such moves. Next we aim to show that it is
possible to finish the untangling of the diagram by $\IImin$ moves only. As soon
as we do this, we get an untangling with defect $n$ by
Lemma~\ref{l:defect_exact} which will finish the proof.

Thus it remains to finish the untangling with $\IImin$ moves only. We again pick
$\ell$ assigned $\TRUE$ and we start shrinking the $\ell$ tentacle by $\IImin$
moves. This way we completely shrink $\gamma_1[\ell]$ and $\gamma_2[\ell]$ as
due to the construction as all arcs that meet $\gamma_1[\ell]$ simultaneously
meet $\gamma_2[\ell]$ and vice versa. See Figure~\ref{f:tent_red} for the
initial $\Imin$ move and a few initial $\IImin$ moves. 
Furthermore we can continue shrinking the $\ell$ tentacle until we get a loop
next to the $q[\ell]$ vertex; see Figure~\ref{f:positive_shrunk}.

\begin{figure}
\begin{center}
  \includegraphics{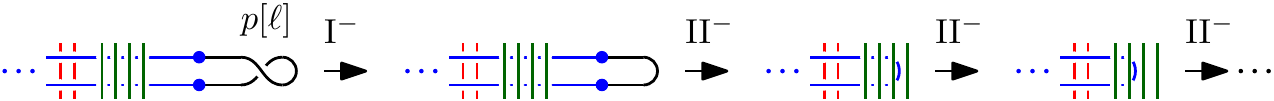}
  \caption{Initial simplifications.} 
\label{f:tent_red}
\end{center}
\end{figure}

\begin{figure}
\begin{center}
  \includegraphics[page=8]{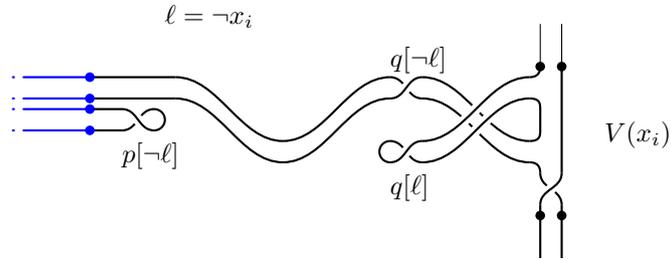}
  \caption{The $\ell$ tentacle was shrunk to a loop next to $q[\ell]$. In this
  example we have $\ell = \neg x_i$.} 
\label{f:positive_shrunk}
\end{center}
\end{figure}

We continue the same process for every literal $\ell$ assigned $\TRUE$. In the
intermediate steps, some of the other arcs meeting $\gamma_1[\ell]$ and
$\gamma_2[\ell]$ might have already been removed. However, it is still possible
to simplify the $\ell$ tentacle as before.
  See Figure~\ref{f:positive_shrunk_global} for the result after
  shrinking all tentacles assigned $\TRUE$.

\begin{figure}
\begin{center}
  \includegraphics[page=9]{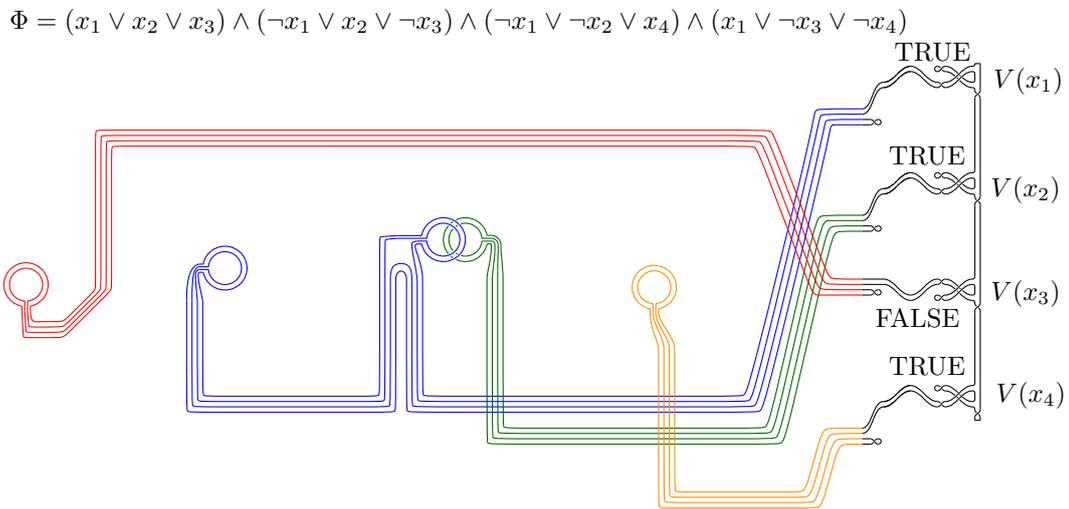}
  \caption{Simplifying the tentacles according to a satisfying assignment $x_1=
  x_2 = x_4 = \TRUE$, $x_3 = \FALSE$.} 
\label{f:positive_shrunk_global}
\end{center}
\end{figure}

Because we assume that we started with a satisfying assignment, in each clause
gadget at least one ring among the three Borromean rings disappears.
Consequently, if there are two remaining rings in some clause gadget, then they
can be pulled apart from each other by $\IImin$ moves as on
Figure~\ref{f:borromean_simplify}.

\begin{figure}
\begin{center}
  \includegraphics[page=10]{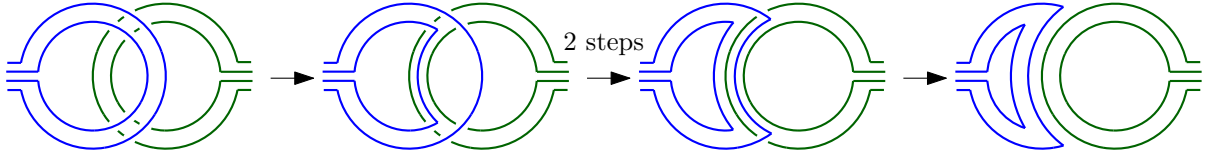}
  \caption{Untangling two rings in the clause gadget via $\IImin$ moves.} 
\label{f:borromean_simplify}
\end{center}
\end{figure}

After this step, for each $\ell$ assigned $\TRUE$, the $\gamma_1[\neg \ell]$ and
$\gamma_2[\neg \ell]$ form `fingers' of four parallel curves. These fingers can
be further simplified by $\IImin$ moves so that any crossings among different
fingers are removed; see Figure~\ref{f:fingers_negative}. For each variable
gadget $V(x)$ we get one of the two possible pictures at
Figure~\ref{f:two_options} left. Both of them simplify to the picture on the
right by three further $\IImin$ moves.

\begin{figure}
\begin{center}
  \includegraphics[page=11]{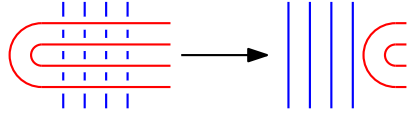}
  \caption{Simplifying $\gamma_1[\neg \ell]$ and $\gamma_2[\neg \ell]$ via
  $\IImin$ moves. First, we untangle the inner (horizontal) `finger' and then we
  untangle the outer (horizontal) `finger'.} 
\label{f:fingers_negative}
\end{center}
\end{figure}

\begin{figure}
\begin{center}
  \includegraphics[page=2]{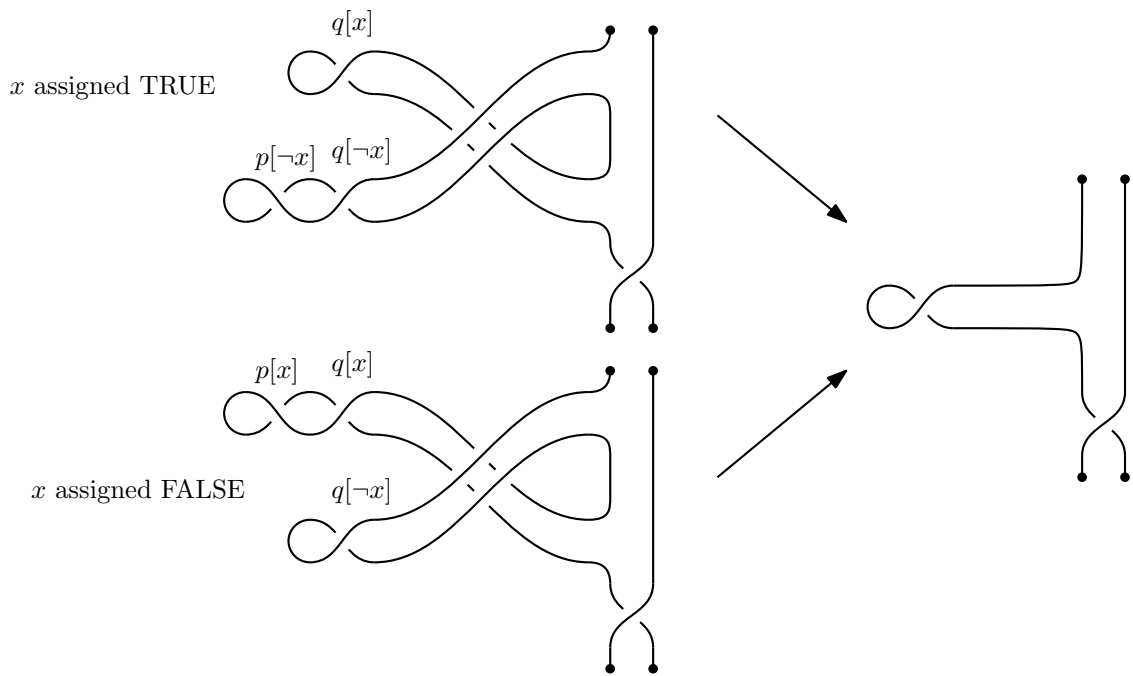}
  \caption{Results of the simplifications on the previous picture on the level
  of variable gadgets.} 
\label{f:two_options}
\end{center}
\end{figure}

Finally, we recall how are the variable gadgets interconnected (compare
the right picture at Figure~\ref{f:two_options} with Figure~\ref{f:stepV}). Then it is easy to
remove all remaining $2n$ crossings by $\IImin$ moves gradually from top to
bottom. This finishes the proof of the `if' part of Theorem~\ref{t:sati_defe}.

\subsection{Small defect implies satisfiable}

The purpose of this subsection is to prove the `only if' part of the statement
of Theorem~\ref{t:sati_defe}. Recall that this means that we assume
$\defe(D(\Phi)) \leq n$ and we want to deduce that $\Phi$ is satisfiable.
(Along  the way we will actually also deduce that $\defe(D(\Phi)) = n$.) In
this subsection, we heavily use the terminology introduced in
Section~\ref{ss:defect}.

Let $\DD = (D^0,
\dots, D^k)$ be an untangling of $D$ with $\defe(\DD) \leq n$. 
For a variable $x$ let $R(x) = \{p[x], p[\neg x], q[x], q[\neg x], s_1(x),
\dots, s_{12}(x)\}$ be the set of $16$ out of the $17$ self-crossings in the
variable gadget $V(x)$ (we leave out $r(x)$) and let the \emph{weight of $x$}, denoted by $w(x)$, be the sum
of weights of the crossings in $R(x)$. 

Our first aim is to analyze the first economical $\IImin$ move that removes
  some of the crossings in $R(x)$, using Lemma~\ref{l:close}.

\begin{claim}
\label{c:first_move}
  Let $x$ be a variable with $w(x) \leq 1$. Let $r$ be the crossing in $R(x)$ which is the first of the crossings in
  $R(x)$ removed by an economical $\IImin$ move (we allow a
  draw).
     Then one of the following cases holds
  \begin{enumerate}[(i)]
    \item $\{r, t(r)\} = \{s_1(x), s_2(x)\}$, $w(p[x]) = w(x) = 1$ and $p[x]$ is
  removed by a $\Imin$ move prior to removing $r$
      and $t(r)$.
    \item $\{r, t(r)\} = \{s_1(x), s_3(x)\}$, $w(p[\neg x]) = w(x) = 1$ and $p[\neg x]$ is
  removed by a $\Imin$ move prior to removing $r$
      and $t(r)$.
\end{enumerate}
 
\end{claim}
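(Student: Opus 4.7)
The plan is to combine Lemma \ref{l:close} (applied with $R = R(x)$ and $c = 1$) with a geometric case analysis on the variable gadget $V(x)$ from Figure \ref{f:vg}. Since the hypothesis $w(R(x)) = w(x) \leq 1$ matches the bound $c = 1$, Lemma \ref{l:close} gives that $r$ and $t(r)$ are $1$-close neighbors with respect to $R(x)$. Explicitly, the preimage arcs $\alpha = \alpha(r)$ and $\beta = \beta(r)$ connect $r$ and $t(r)$ in $D$ with $\alpha$ overpassing at both endpoints, $\beta$ underpassing at both endpoints, neither arc containing $r$ or $t(r)$ in its interior, and their interiors together containing at most one crossing of $R(x)$.

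The core of the argument is a careful inspection of $V(x)$ to determine which pairs of crossings in $R(x)$ can be such $1$-close neighbors. I would go through each pair $\{r, t(r)\}$ in $R(x)$ and check two independent conditions: (a) the overpass/underpass pattern at both endpoints is realizable by a pair of arcs in $D$, and (b) the two arcs together contain at most one crossing of $R(x)$ in their interiors. For most pairs in $R(x)$, any pair of arcs satisfying (a) is forced to traverse at least two other crossings of $R(x)$, violating (b). The surviving candidates are precisely $\{s_1(x), s_2(x)\}$ and $\{s_1(x), s_3(x)\}$, and in each case the unique admissible interior crossing from $R(x)$ is the loop $p[x]$ (respectively $p[\neg x]$), which every choice of arcs satisfying (a) is forced to pass through.

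Finally, I extract the weight and $\Imin$-removal conclusions. Focus on case (i) with $\{r, t(r)\} = \{s_1(x), s_2(x)\}$. Since $p[x]$ lies in the interior of one of $\alpha(r), \beta(r)$ at time $0$ but does not appear in the bigon at time $b$, the crossing $p[x]$ must be destroyed by some Reidemeister move during the untangling before the $(b+1)$-st move. By the minimality of $r$ as the first crossing of $R(x)$ vanishing via an economical $\IImin$, the crossing $p[x]$ cannot vanish this way before $r$. A wasteful $\IImin$ removal would give $w(p[x]) \geq 2$, exceeding the budget, and each $\III$ involvement adds $\tfrac{2}{3}$, also incompatible with $w(x)\leq 1$. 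Hence $p[x]$ is removed by a $\Imin$ move and is in no $\III$ move, yielding $w(p[x]) = 1$ and in turn $w(x) = 1$ (every other crossing of $R(x)$ has weight zero). Case (ii) follows by the evident symmetry between $x$ and $\neg x$. The main obstacle is the geometric case analysis in the second step: one must systematically rule out every pair of crossings in $R(x)$ except the two listed, which is a nontrivial direct inspection of the variable gadget.
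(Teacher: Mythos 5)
Your proposal follows the same overall strategy as the paper's proof: apply Lemma~\ref{l:close} with $R=R(x)$ and $c=1$ to obtain the preimage arcs $\alpha,\beta$, carry out a case analysis on the variable gadget to isolate $\{s_1(x),s_2(x)\}$ and $\{s_1(x),s_3(x)\}$ as the only admissible twin pairs, then use the weight budget to force $p[x]$ (resp.\ $p[\neg x]$) to vanish by a single $\Imin$ move. The trouble is that you explicitly defer the case analysis (``a nontrivial direct inspection''), and that inspection is where essentially all of the content of the claim resides. The paper does it by listing, for each candidate $r\in R(x)$, the positions that $t(r)$ can occupy as the far endpoint of $\alpha$ (respecting the overpass condition and a budget of one intermediate $R(x)$-crossing) and separately as the far endpoint of $\beta$, then intersecting the two lists. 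Most entries have empty intersection, leaving only $s_1(x),s_2(x),s_3(x)$. One case, $r=s_9(x)$, is genuinely subtle and cannot be dispatched by the overlap test alone: the two candidate lists \emph{do} intersect (in $r(x)$, $\delta'(x)$, $\varepsilon'(x)$), but reaching any of these forces $\alpha$ through $s_{10}(x)$ and $\beta$ through $s_{11}(x)$, blowing the $c=1$ budget. Without surfacing this you have not actually established that the two listed pairs are the only survivors.

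There is also a soft spot in the final step. You write that ``each $\III$ involvement adds $\tfrac23$, also incompatible with $w(x)\le 1$,'' but $\tfrac23\le 1$, so a single $\III$ involvement is not, by itself, over budget. The correct reason $\III$ moves are ruled out is that if $p[x]$ survives and leaves the preimage arc, the $\III$ move that expels it must swap $p[x]$ past an \emph{endpoint} of the arc, i.e., past (the current image of) $r$ or $t(r)$, both of which belong to $R(x)$. That single $\III$ move therefore contributes $\tfrac23$ to \emph{each} of two crossings in $R(x)$, giving $w(x)\ge\tfrac43>1$. With this repaired, your conclusion --- $p[x]$ is removed by a $\Imin$ move, participates in no $\III$ move, hence $w(p[x])=w(x)=1$, with every other crossing of $R(x)$ of weight zero --- goes through, and case~(ii) follows by the symmetry you invoke. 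So the plan is right and nothing would fail, but as written the main verification is missing and the weight estimate needs to be tightened.
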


Before we start the proof, we remark that the condition $w(x) \leq 1$ implies
  that there are (at least $15$) crossings in $R(x)$ removed by economical
  $\IImin$ moves. In particular, $r$ in the statement always exists.

\begin{proof}
  First, we need to identify the possible pairs $\{r,t(r)\}$ where $r \in
  R(x)$. Such pairs are found by a case analysis, using Lemma~\ref{l:close}
  with $c = 1$ and $R = R(x)$.

The general strategy is the following. For each element
  of $R(x)$ we consider whether it may be $r$. We analyze possible arcs
  $\alpha$ and $\beta$ from the definition of $c$-close neighbors. 

  There are two directions in which $\alpha$ may emanate from $r$. For each direction we allow up
to one internal crossing from $R(x)$ on $\alpha$, getting a candidate
  position for $t(r)$ (even if $\alpha$ passes through other variable
  gadgets we count only the crossings from $R(x)$). We immediately disregard
  the cases when $\alpha$ passes through $r$ again (this is not allowed by the
  third item of the definition of close neighbors). We also emphasize that we
  are interested only in the cases when $\alpha$ enters the candidate $t(r)$
  as an overpass.

  Next, we refocus on $\beta$; again there are two possible directions and we
  again identify possible the possible position of $t(r)$ (this time entered
  as an underpass).

  Finally, we compare the lists of candidate positions for $t(r)$ obtained for $\alpha$ and
  for $\beta$; it must be possible to obtain $t(r)$ in both ways.

The candidate positions of $t(r)$ as an endpoint of $\alpha$ and as an
endpoint of $\beta$ are summarized in Table~\ref{t:tr}; and they can be easily
  found with the aid of Figure~\ref{f:vg}. Note that it follows from the
  construction of $D(\Phi)$ that $\delta(x)$ and $\varepsilon(x)$ are (usually)
  not in $D(\Phi)$. For considerations in the table, we denote by $\delta'(x)$
  the arc in $D(\Phi)$ between the points $a_1(x)$ and $a_5(x)$ which avoids
  $r(x)$ (equivalently, any other crossing in $V(x)$). Similarly, we let $\varepsilon'(x)$ denote
  the arc in $D(\Phi)$ between the points $a_4(x)$ and $a_6(x)$ which avoids
  $r(x)$. The table also misses values
  for $r = s_9(x)$ which requires a separate analysis.

  In order to avoid any
  ambiguity, we explain how the first row of the table is obtained,
  considering the case $r = p[x]$ (follow Figure~\ref{f:vg}):

\begin{table}
  \begin{tabular}{llll}
    Choice of $r$ & $t(r)$ in $\alpha$ & $t(r)$ in $\beta$ & Overlap \\
\hline
$p[x]$ & $\gamma_2[x]$ & $\gamma_1[x], s_2(x), s_4(x)$ &
$\emptyset$\\
$p[\neg x]$ & $\gamma_2[\neg x], s_3(x), s_4(x)$ & $\gamma_1[\neg x]$ &
$\emptyset$\\
$s_1(x)$ & $\gamma_1[\neg x], \gamma_2[\neg x], s_3(x), s_2(x)$ & $\gamma_2[x], \gamma_1[x], s_2(x), s_3(x)$ &
$s_2(x), s_3(x)$\\
$s_2(x)$ & $s_1(x), \gamma_1[\neg x]$ &
$\gamma_1[x], p[x], \gamma_2[x], s_1(x), s_4(x)$ &
$s_1(x)$\\
    $s_3(x)$ & $\gamma_2[\neg x], p[\neg x], \gamma_1[\neg x], s_1(x), s_4(x)$ &
$s_1(x), \gamma_2[x]$ & $s_1(x)$\\
    $s_4(x)$ & $s_3(x), \gamma_2[\neg x], p[\neg x]$ &
    $s_2(x), \gamma_1[x], p[x]$ & $\emptyset$\\
$s_5(x)$ & $s_6(x)$ & $s_7(x)$ & $\emptyset$\\
$s_6(x)$ & $s_5(x)$ & $s_8(x), q[\neg x]$ & $\emptyset$\\
$s_7(x)$ & $s_8(x), q[x]$ & $s_5(x)$ & $\emptyset$\\
$s_8(x)$ & $s_7(x), q[x]$ & $s_6(x), q[\neg x]$ & $\emptyset$\\
$q[x]$ & $s_7(x), s_8(x)$ & $s_9(x), s_{11}(x)$ & $\emptyset$\\
$q[\neg x]$ & $s_9(x), s_{10}(x)$ & $s_6(x), s_8(x)$ & $\emptyset$\\
    $s_9(x)$ & \multicolumn{3}{c}{separate analysis}\\
$s_{10}(x)$ & $s_9(x), q[\neg x], \delta'(x), r(x), \varepsilon'(x)$ & $s_{12}(x)$
& $\emptyset$\\
$s_{11}(x)$ & $s_{12}(x)$ & $s_9(x), q[x], r(x), \varepsilon'(x), \delta'(x)$
& $\emptyset$\\
    $s_{12}(x)$ & $s_{11}(x)$ & $s_{10}(x)$ & $\emptyset$\\

\end{tabular}
  \caption{Possible positions for $t(r)$ depending on $r$ in
  Claim~\ref{c:first_move}.}
  \label{t:tr}
\end{table}
 
 We know that $\alpha$ may emanate to the left or to the right. Emanating to the left is immediately ruled out as we reach
  $p[x]$ again in the next crossing. Emanating to the right allows $t(r)$ to be
  some crossing on $\gamma_2[x]$ or seemingly it may be $s_1(x)$ or $s_3(x)$;
  however $s_1(x)$ and $s_3(x)$ are entered by $\alpha$ as an underpass, so they
  cannot be $t(r)$. Therefore the only option, from the point of view of
  $\alpha$, is that $t(r)$ belongs to $\gamma_2[x]$, as marked in
  Table~\ref{t:tr}. Similarly when focusing on $\beta$, emanating to the left
  is immediately ruled out whereas emanating to the right allows $t(r)$ to
  belong $\gamma_1[x]$
or to be $s_2(x)$ or $s_4(x)$ (as in the table).
  We conclude that for $r = p[x]$ there is no $t(r)$ suitable both for $\alpha$
  and $\beta$, using the fact that $\gamma_1[x]$ and $\gamma_2[x]$ do not
  intersect (in the whole $D(\Phi)$). (This is marked by the $\emptyset$ in the
  overlap column.) We deduce that $r$ cannot be $p[x]$.
 
In general, for identifying the overlaps for other options of $r$, we use that
  no two of the arcs $\gamma_1[x], \gamma_2[x], \gamma_1[\neg x], \gamma_2[\neg
  x]$ intersect.

Next, we want to rule out the case $r = s_9(x)$ because this is not covered by
Table~\ref{t:tr}. Considering the possible arcs $\alpha$, we get the following
options for $t(r)$: $q[\neg x]$, $s_{10}(x)$, $\delta'(x)$, $r(x)$,
$\varepsilon'(x)$, and from the point of view of $\beta$, we have the following
options for $t(r)$: $q[x]$, $s_{11}(x)$, $r(x)$, $\varepsilon'(x)$, $\delta'(x)$.
Therefore, there are $r(x)$, $\delta'(x)$ and $\epsilon'(x)$ in the overlap (in
addition $\delta'(x)$ and $\epsilon'(x)$ may intersect). However, if we want to
reach $r(x)$, $\delta'(x)$ or $\epsilon'(x)$ with $\alpha$ we have to pass
through $s_{10}(x)$. Similarly, $\beta$ has to pass through $s_{11}(x)$. But
this violates the condition that $\alpha$ and $\beta$ together have at most
$c=1$ point of $R(x)$ in their interiors.

By checking Table~\ref{t:tr} and by the paragraph above we deduce
  that the only two options for $\{r, t(r)\}$ are $\{s_1(x), s_2(x)\}$ and
  $\{s_1(x), s_3(x)\}$. By checking possible $\alpha$ and $\beta$ we get:

\begin{itemize}
\item $\{r, t(r)\} = \{s_1(x), s_2(x)\}$, $\alpha$ is the arc directly
  connecting $s_1(x)$ and $s_2(x)$ containing no other crossings and $\beta$ is
    the arc connecting $s_1(x)$ and $s_2(x)$ passing (twice) through $p[x]$; or

\item $\{r, t(r)\} = \{s_1(x), s_3(x)\}$, $\alpha$ is
      the arc connecting $s_1(x)$ and $s_3(x)$ passing (twice) through $p[\neg
      x]$ and $\beta$ is the arc directly  connecting $s_1(x)$ and $s_3(x)$
      containing no other crossings.
\end{itemize}

  Let us first focus on the first case above.

  Before removing $s_1(x)$ and $s_2(x)$ the crossing $p[x]$ has to be removed from the
  arc $\beta$. This can be done, in principle, by a $\Imin$ move, a wasteful
  $\IImin$ move, or $p[x]$ can be 
  swapped  with $s_1(x)$ or $s_2(x)$ by a $\III$
  move before removing $s_1(x)$ and $s_2(x)$.

  Removing $p[x]$ by a $\Imin$ move is the desired conclusion $(i)$ given that
  in this case we have $w(p[x]) \geq 1$ (because of the $\Imin$ move) as well
  as $w(p[x]) \leq w(x) \leq 1$ (from the assumptions of the claim); therefore
  $w(p[x]) = w(x) = 1$
  as required.

 Removing $p[x]$ by a wasteful $\IImin$ move is impossible as we would have
 $w(p[x]) \geq 2$ whereas $w(x) \leq 1$ by the assumption of the claim.
 
 Similarly, swapping $p[x]$ with $s_1(x)$ or $s_2(x)$ is impossible as we would have $w(p[x])
 + w(s_1(x)) + w(s_2(x)) \geq 2 \cdot \frac 23$ but $w(x) \leq 1$ again.

Conclusion $(ii)$ follows analogously from the second case.
\end{proof}

Now, let us set $\ell := \ell(x) := x$ if the conclusion $(i)$ of Claim~\ref{c:first_move}
holds and $\ell := \ell(x) := \neg x$ if the conclusion $(ii)$ holds (assuming $w(x) \leq
1$). (We identify $\neg \neg x$ with $x$, that is, if $\ell = \neg x$, then
$\neg \ell = x$.) 

\begin{claim}
\label{c:twins}
  If $w(x) \leq 1$, then $p[\neg \ell]$ and $q[\neg \ell]$ are twins. In
  addition, the preimage arcs $\alpha$ and $\beta$ between $p[\neg \ell]$
  and $q[\neg \ell]$ contain $\gamma_1[\neg \ell]$ and
  $\gamma_2[\neg \ell]$.  
\end{claim}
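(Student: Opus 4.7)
Assume case $(i)$ of Claim \ref{c:first_move} (case $(ii)$ is symmetric by swapping $x$ and $\neg x$), so $\ell = x$, $w(p[x]) = w(x) = 1$, the $\Imin$ move on $p[x]$ takes place before the economical $\IImin$ move that removes $\{s_1(x), s_2(x)\}$, and consequently every crossing in $R(x) \setminus \{p[x]\}$ has weight $0$: it is removed by an economical $\IImin$ move and is never touched by a $\III$ move.

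My plan is to iterate the case analysis of Claim \ref{c:first_move} with a shrinking subset of $R(x)$. I start from $R_1 := R(x) \setminus \{s_1(x), s_2(x)\}$ (which still has $w(R_1) \leq 1$) and apply Lemma \ref{l:close} with $c = 1$, obtaining the next pair of twins $\{r, t(r)\}$. The arcs $\alpha, \beta$ in the definition of close neighbors may now cross $s_1(x)$ and $s_2(x)$ without incurring any budget. Rebuilding Table \ref{t:tr} with those crossings transparent, and using the non-intersection of $\gamma_i[\ell]$ and $\gamma_i[\ell']$ for different literals, shows that the new admissible pairs are obtained by taking one step further along the $x$ tentacle. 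After peeling off this pair I enlarge the transparent set and iterate, each time revealing a single new twin pair, until the only crossings of $R(x)$ still unmatched are $p[\neg x]$ and $q[\neg x]$. When $p[\neg x]$ becomes the first crossing of the current $R_j$ removed by an economical $\IImin$ move, Lemma \ref{l:close} forces $t(p[\neg x]) = q[\neg x]$, establishing the first half of the claim.

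For the second half, I examine the preimage arcs $\alpha, \beta$ between $p[\neg x]$ and $q[\neg x]$. The iteration shows that the only crossings of $R(x)$ that $\alpha$ and $\beta$ may meet in their interiors are ones already paired off locally inside the gadget; combined with the zero-weight assumption (which forbids any $\III$ move on these crossings), the arcs of the bigon pull back in $D(\Phi)$ to the unique pair of arcs from $p[\neg x]$ to $q[\neg x]$ that enter with the correct overpass/underpass pattern and avoid the crossing $r(x)$. Those are precisely the two arcs running along the $\neg x$ tentacle, hence contain $\gamma_1[\neg x]$ and $\gamma_2[\neg x]$.

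The main obstacle is the table-based case analysis at each iteration: as $R_j$ shrinks, new candidate positions for $t(r)$ become reachable, and ruling them out requires careful use of the weight budget (any $\III$ move or wasteful $\IImin$ move would blow past $w(x) \leq 1$) together with the global non-intersection of tentacle arcs belonging to distinct literals. A secondary difficulty is handling the crossings near $r(x)$, $\delta(x)$, and $\varepsilon(x)$, where the gadget meets its neighbors through the interconnections of Step~V; these will require a separate argument in the spirit of the $s_9(x)$ exception from the proof of Claim \ref{c:first_move}, ensuring in particular that the preimage arcs cannot shortcut across the boundary of $V(x)$.
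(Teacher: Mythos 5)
Your plan is a genuinely different route from the paper, and I think it has a real gap rather than just being longer.

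The paper does not iterate. Instead of peeling pairs one at a time starting from $\{s_1(x),s_2(x)\}$, it makes a single clean application of Lemma~\ref{l:close}: it defines the weight-zero subset $R'(x) = \{p[\neg\ell], q[\ell], q[\neg\ell], s_9(x), s_{10}(x), s_{11}(x), s_{12}(x)\}$ (the crossings of $R(x)$ that lie off the loop through $p[\ell]$), applies the lemma once with $c=0$, and a single table (Table~\ref{t:choice_l}) plus a short separate argument for $r=q[\ell]$ forces $\{r,t(r)\}=\{p[\neg\ell],q[\neg\ell]\}$. Notice the paper never keeps $p[\ell]$ in the test set, so the budget drops to $c=0$; you keep it, which weakens the constraint on $\alpha,\beta$ at every step and makes each table strictly harder.

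The concrete gap in the iteration is the claim that it terminates with exactly $\{p[\neg x],q[\neg x]\}$ unmatched. Lemma~\ref{l:close} controls the twin of the \emph{first} element of $R_j$ removed by an economical $\IImin$ move; it does not say that twin lies in $R_j$, nor in $R(x)$ at all. In fact it cannot: after $p[x]$ is taken out by the $\Imin$ move, $R(x)\setminus\{p[x]\}$ has $15$ elements, and $15$ is odd, so at least one crossing of $R(x)$ has its twin outside $R(x)$ — something your ``reveal a single new twin pair, enlarge the transparent set, iterate'' scheme never confronts. (Indeed $q[\ell]$ is exactly such a crossing, which is why the paper handles $r=q[\ell]$ separately and shows it cannot be the \emph{first} removed from $R'(x)$, rather than trying to identify its twin.) Even if you patch that, your final step — ``Lemma~\ref{l:close} forces $t(p[\neg x])=q[\neg x]$'' — is not a one-liner: it needs essentially the same table analysis the paper does for $R'(x)$, so the iterative layering on top is extra work without eliminating the core case check. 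Finally, you flag but do not resolve the behavior near $r(x)$, $\delta(x)$, $\varepsilon(x)$; the paper handles this inside the same table via $\delta'(x)$, $\varepsilon'(x)$.

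To repair the argument along the paper's lines: drop the iteration, restrict to a weight-zero subset of $R(x)$ that contains $p[\neg\ell]$ and $q[\neg\ell]$ but avoids $p[\ell]$ and everything on the $\ell$-side loop, apply Lemma~\ref{l:close} once with $c=0$, and do the case analysis there. The second half of your sketch (identifying the preimage arcs with the $\neg\ell$ tentacle) is then essentially the paper's concluding inspection and is fine.
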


\begin{proof}
  Let us set $R'(x) := \{p[\neg \ell], q[\ell], q[\neg \ell], s_9(x), s_{10}(x),
  s_{11}(x), s_{12}(x)\}$. Intuitively, the crossings of $R'(x)$ are those
  crossings of $R(x)$ that do not meet the arc from $q[\ell]$ to $q[\ell]$
  containing $p[\ell]$; see Figure~\ref{f:vg_choice_l}.
  
  All $r' \in R'(x)$ have $w(r') = 0$ as $w(x) =
  w(p[\ell]) = 1$ by Claim~\ref{c:first_move}. In particular, all such $r'$ are
  removed by an economical $\IImin$ move. Let $r$ be the first of these elements removed by an
economical $\IImin$ move. Let $\alpha$ and $\beta$ be the arcs from
Lemma~\ref{l:close} with $R = R'$ and $c = 0$. 

\begin{figure}
\begin{center}
  \includegraphics{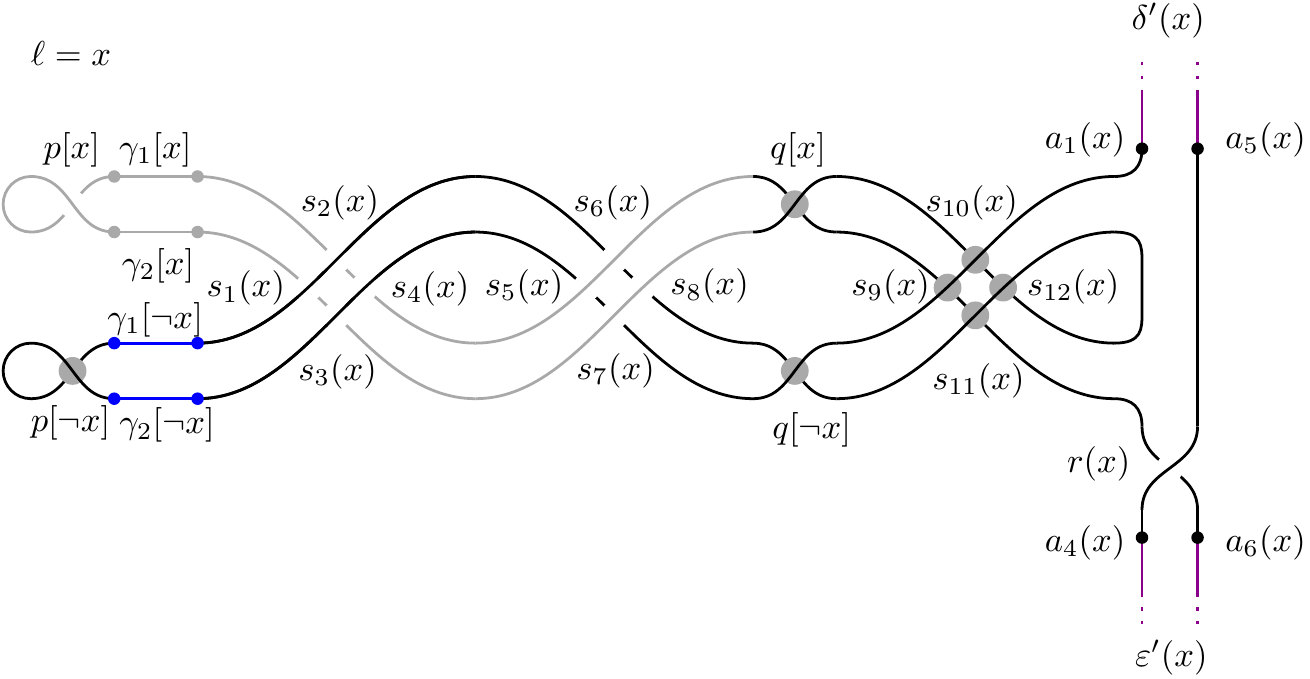}
  \caption{The crossings of $R'(x)$ in the variable gadget in case $\ell = x$.} 
\label{f:vg_choice_l}
\end{center}
\end{figure}

  Now we perform a similar inspection as in the
  proof of Claim~\ref{c:first_move}. This time $c = 0$, thus we do not allow
  any internal crossing on arcs $\alpha$ and $\beta$. Most of the cases are
  straightforward and we refer to Table~\ref{t:choice_l} for the possible
  $\alpha$ and $\beta$; $\delta'(x)$ and $\varepsilon'(x)$ play the same role
  as in the proof of the previous claim. The only exception is that we need to rule out the case
  $r = q[\ell]$ separately:

\begin{table}
\begin{tabular}{llll}
    Choice of $r$ & $t(r)$ in $\alpha$ & $t(r)$ in $\beta$ & Overlap \\
\hline
  $p[\neg \ell]$ if $\ell = x$ & $\gamma_2[\neg \ell], s_3(x), s_4(x), q[\neg
  \ell]$ &
  $\gamma_1[\neg \ell], s_6(x), s_8(x), q[\neg \ell]$ & $q[\neg \ell]$\\
  $p[\neg \ell]$ if $\ell = \neg x$ & $\gamma_2[\neg \ell], s_7(x), s_8(x),
  q[\neg \ell]$ &
  $\gamma_1[\neg \ell], s_2(x), s_4(x), q[\neg \ell]$ & $q[\neg \ell]$\\
  $q[\ell]$ &  \multicolumn{3}{c}{separate analysis}\\
  $q[\neg \ell]$ if $\ell = x$ & $s_4(x), s_3(x), \gamma_2[\neg \ell], p[\neg
  \ell], s_9(x)$ &
  $s_8(x), s_6(x), \gamma_1[\neg \ell], p[\neg \ell]$ & $p[\neg \ell]$\\
  $q[\neg \ell]$ if $\ell = \neg x$ & $s_8(x), s_7(x), \gamma_2[\neg \ell],
  p[\neg \ell]$ &
  $s_4(x), s_2(x), \gamma_1[\neg \ell], p[\neg \ell], s_9(x)$ & $p[\neg \ell]$\\
  $s_9(x)$ & $q[\neg x], s_{10}(x)$ & $q[x], s_{11}(x)$ & $\emptyset$\\
  $s_{10}(x)$ & $s_9(x), \delta(x), r(x), \epsilon(x)$ & $s_{12}(x)$ & $\emptyset$\\
  $s_{11}(x)$ & $s_{12}(x)$ & $s_9(x), r(x), \epsilon(x), \delta(x)$ & $\emptyset$\\
  $s_{12}(x)$ & $s_{11}(x)$ & $s_{10}(x)$ & $\emptyset$\\
\end{tabular}
  \caption{Possible positions for $t(r)$ depending on $r$ in Claim~\ref{c:twins}.}
  \label{t:choice_l}
\end{table}

  Let us therefore assume that $r = q[\ell]$. First we want to observe
  that both $\alpha$ and $\beta$ emanate from $q[\neg \ell]$ to the left.
  Indeed, if $\alpha$ emanates to the right, then necessarily $\ell = \neg x$
  and $t(r) = s_9(x)$ but we do not have a suitable $\beta$ for this case.
  Similarly, if $\beta$ emanates to the right, then $\ell = x$ and $t(r) =
  s_9(x)$ but we do not have a suitable $\alpha$.

  Now we know that both $\alpha$ and $\beta$ emanate to the left. This means
  that both $\alpha$ and $\beta$ are subarcs of the arc with both endpoints
  $q[\ell]$ left from $q[\ell]$ (if $\ell = x$, this is the grey arc on the
  Figure~\ref{f:vg_choice_l}). In particular, $t(r)$ has to be a selfcrossing of this arc, and
  there is only one option, namely $t(r) = p[\ell]$. However, here 
  we crucially use Claim~\ref{c:first_move}, the twin of $q[\ell]$ cannot be
  $p[\ell]$ as $p[\ell]$ is removed by a $\Imin$ move. This rules out the case
  $r= q[\ell]$.

  Therefore, it follows from Table~\ref{t:choice_l} that $\{r, t(r)\} = \{p[\neg
  \ell], q[\neg \ell]\}$. In addition, a further inspection of the variable
  gadget also reveals that $\alpha$ contains $\gamma_2[\neg \ell]$ and $\beta$
  contains $\gamma_1[\neg \ell]$ as desired.
\end{proof}

Now, we have acquired enough tools to finish the proof of the proposition.
By Claim~\ref{c:first_move}, we have $w(x) \geq 1$ for any variable $x$.
By Lemma~\ref{l:discharging}, we deduce 
$$\defe(\DD) \geq \sum_x w(x) \geq n,$$
where the sum is over all variables. On the other hand, we assume $\defe(\DD)
\leq n$. Therefore both inequalities above have to be equalities and in
particular $w(x) = 1$ for any variable $x$. In particular the assumptions of
Claims~\ref{c:first_move} and~\ref{c:twins} are satisfied for any variable $x$.

Given a variable $x$, we assign $x$ with $\TRUE$ if the conclusion $(i)$ of
Claim~\ref{c:first_move} holds (that is, if $x = \ell(x)$). Otherwise, if the conclusion $(ii)$ of
Claim~\ref{c:first_move} holds (i.e. $\neg x = \ell(x)$), we set $x$ to
$\FALSE$.
It remains to prove that we get a satisfying assignment this way.

For contradiction, suppose there is a clause $c = (\ell_1 \vee \ell_2 \vee \ell_3)$
which is not satisfied with this assignment. Let $x_i$ be the variable of
$\ell_i$, that is, $\ell_i = x_i$ or $\ell_i = \neg x_i$. The fact that $c$ is not
satisfied with the assignment above translates as $\ell(x_i) = \neg \ell_i$ for
any $i \in \{1,2,3\}$.

By Claim~\ref{c:twins}, we get that $p[\ell_i]$ and $q[\ell_i]$ are twins for
any $i \in \{1,2,3\}$. Let $R''(c)$ be the set of crossings in these
Borromeans union the sets $\{p[\ell_i], q[\ell_i]\}$ for $i \in \{1,2,3\}$.  
All the crossings in $R''(c)$ have weight $0$ and they have to be removed by
economical $\IImin$ moves as all defect is realized on points $p[\ell(x)]$ for
all variables $x$ (but $p[\ell_i] = p[\neg \ell(x_i)]$ are not among these
points). 

Let $r$ be the first removed crossing among the crossings in $R''(c)$. First,
we observe that $r$ cannot be any of $p[\ell_i]$ or $q[\ell_i]$ for $i \in
\{1,2,3\}$. This follows from Claim~\ref{c:twins} as the arcs
$\gamma_1[\ell_i]$ and $\gamma_2[\ell_i]$ contain some crossings in $R''(c)$.

Next we apply Lemma~\ref{l:close} with $R = R''(c)$ and $c = 0$.
By symmetry of the clause gadget, it is sufficent to consider the cases that
$r$ is one of the crossings $u_1, \dots, u_8$ on Figure~\ref{f:u_crossings}
between the rings for $\ell_1$ and $\ell_2$.

\begin{figure}
\begin{center}
  \includegraphics[page=2]{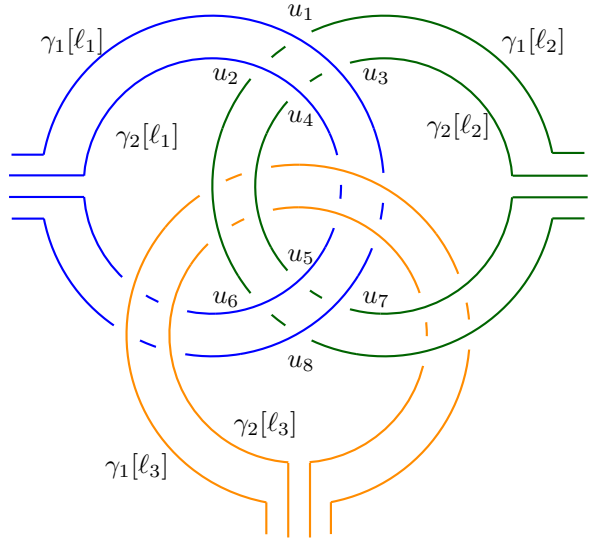}
  \caption{The clause gadget with crossings $u_1, \dots, u_8$.}
  \label{f:u_crossings}
\end{center}
\end{figure}

Let $\alpha$ and $\beta$ be the arcs between $r$ and $t(r)$ from the definition
of $c$-close neighbors. We can immediately rule out $r \in \{u_4, u_5, u_6, u_7,
u_8\}$ by an easy inspection as in Claim~\ref{c:twins} (this is easy because we
always hit a crossing from $R''(c)$ by possible $\alpha$ and $\beta$).
Therefore it remains to consider the case $r \in \{u_1, u_2, u_3\}$. 

Now let us consider the case $r = u_3$. The only option for $\alpha$ is to
emanate to the left reaching the crossing $u_1$ as emanating to the right
reaches a crossing from $R''(c)$ as an underpass. Consequently, $\beta$ has to
emanate to the right since emanating to the left would reach $u_4$. However,
before $\beta$ reaches $u_1$, it has to pass through $p[\ell_2]$ or $q[\ell_2]$
which rules out this option.

The case $r = u_2$ is ruled out analogously.

It remains to consider the case $r = u_1$. We have already ruled out the case
that the twin $t(r)$ would be $u_2$ or $u_3$ (it is sufficient to swap $r$ and
$t(r)$ in the previous considerations). Thus $\alpha$ has to emanate to the
left from $u_1$ whereas $\beta$ has to emanate to the right. The first point of
$R''(c)$ that $\alpha$ reaches is $p[\ell_1]$ or $q[\ell_1]$ while the first
point of $R''(c)$ that $\beta$ reaches is $p[\ell_2]$ or $q[\ell_2]$. Therefore
$t(r)$ must be some crossing of $\gamma_1[\ell_1]$ and $\gamma_1[\ell_2]$ while
$\alpha$ is a subarc of $\gamma_1[\ell_1]$ and $\beta$ is a subarc of
$\gamma_1[\ell_2]$. On the one hand, such crossing may exist. On the other
hand, $\alpha$ reaches such a crossing always as an underpass and $\beta$
as an overpass due to our convention in Step~III of the construction of $D(\Phi)$. Therefore, we
do not get admissible $\alpha$ and $\beta$. This contradicts the existence of
$r$. Therefore, the suggested assignment is satisfying. 
This finishes the proof.


%

\part{Hard link invariants}
\label{p:links}
\section{Intermediate Invariants}
\label{section:intermediate}

In this section we describe a family of link invariants from the
statement of Theorem~\ref{t:main2}.
The material presented here is standard and details can be found in various
textbooks; since every piecewise linear knot can be smoothed in a unique way,
we assume as we may that the knots discussed are smooth.  Throughout
Part~\ref{p:links} we work in the smooth category.  We first define:

\begin{definition}  
\label{dfn:VariousLinkInvariants}
Let \(L\) be a link in the \(3\)-sphere.  We now give a list of the invariants that we will be using; for a detailed discussion see, for example,~\cite{rolfsen}. 

\begin{enumerate}
	\item A \em smooth slice surface \em for \(L\) is an orientable surface with no closed components, properly and smoothly embedded in the \(4\)-ball, whose boundary is \(L\).  Recall that every link bounds an orientable surface in \(S^{3}\) (a Seifert surface); by pushing the interior of that Seifert surface into the \(4\)-ball we see that every link bounds a smooth slice surface.

	\item The \em \(4\)-ball Euler characteristic \em of \(L\), denoted \(\chi_{4}(L)\), is the largest integer so that \(L\) bounds a smooth slice surface of Euler characteristic \(\chi_{4}(L)\).  Since a smooth slice surface has no closed components, its Euler characteristic is at most the number of components of \(L\); in particular, \(\chi_{4}(L)\) exists.

	\item A link is called \em smoothly slice \em if it bounds a slice surface that consists entirely of disks; equivalently, the \(\chi_{4}(L)\) equals the number of components of \(L\).  Note that unlinks are smoothly slice  (but not only unlinks).
	\item The \em unlinking number\em, denoted \(u(L)\), is the smallest nonnegative integer so that \(L\) admits some diagram \(D\) so that after \(u(L)\) crossing changes on \(D\) a trivial link is obtained.
	\item The \em ribbon number\em, denoted \(u_{r}(L)\), is the smallest nonnegative integer so that \(L\) admits some diagram \(D\) so that after \(u_{r}(L)\) crossing changes on \(D\) a ribbon link is obtained (see~\cite{rolfsen} for the definition of ribbon link).
	\item The \em slicing number\em, denoted \(u_{s}(L)\), is the smallest nonnegative integer so that \(L\) admits some diagram \(D\) so that after \(u_{s}(L)\) crossing changes on \(D\) a smoothly slice link is obtained.  	
    \item Links \(L_{0},L_{1} \subset S^{3}\) are called \em concordant \em if
      there exists a smooth embedding $f: L_0\times [0,1] \rightarrow S^n \times
      [0,1]$, so that $f(L_0 \times \{0\})=L_0 \times \{0\}$ and $f(L_0 \times
      \{1\})=L_1 \times \{1\}$. Equivalently, there exists a union of smooth annuli
	\(A\), properly embedded in \(S^{3} \times [0,1]\), so that \(A \cap
	(S^{3} \times \{0\}) = L_{0} \times\{0\}\) and \(A \cap (S^{3} \times \{1\}) =
      L_{1} \times \{1\}\).
	\item The \em concordance unlinking number\em, denoted \(u_{c}(L)\), is the minimum of the unlinking number over the concordance class of \(L\).
	\item The \em concordance ribbon number\em, denoted \(u_{cr}(L)\), is the minimum of the ribbon number over the concordance class of \(L\).
	\item The \em concordance slicing number\em, denoted \(u_{cs}(L)\), is the minimum of the slicing number over the concordance class of \(L\).
	\item By transversality, every link \(L\) bounds smoothly immersed disks in \(B^{4}\) with finitely many double points.      The \em \(4\)-dimensional clasp number \em (sometimes called the \em four ball crossing number\em) of \(L\), denoted \(c_{s}(L)\), is the minimal number of double points for such disks.  
	\end{enumerate}
\end{definition}

Finally, we define:

\begin{definition}[intermediate invariant] 
\label{dfn:IntermediateInvariant}
A real valued link invariant \(i\) is called an \em intermediate invariant \em  if
\[
u \geq i \geq c_{s}
\]
\end{definition}

Many invariants are known to be intermediate (see, for example,~\cite{Shibuya}).  We list a few here:

\begin{lemma}
\label{lemma:IntermediateInvariant}
The invariants \(u\), \(u_{r}\), \(u_{s}\), \(u_{c}\), \(u_{cr}\), \(u_{cs}\) and \(c_{s}\)
are all intermediate.
\end{lemma}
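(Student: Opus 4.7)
The goal is to establish, for each invariant $i$ in the list, the inequalities $u(L) \geq i(L) \geq c_s(L)$. I plan to derive all of them from four ingredients: the class inclusions $\{\textrm{unlinks}\} \subseteq \{\textrm{ribbon links}\} \subseteq \{\textrm{smoothly slice links}\}$; the fact that each link is concordant to itself; a realization of each crossing change as a single transverse self-intersection of an immersed cobordism; and concordance invariance of $c_s$.

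From the class inclusions, any sequence of crossing changes that converts $L$ to an unlink also converts it to a ribbon link and to a slice link, so directly from the definitions $u(L) \geq u_r(L) \geq u_s(L)$, and the same argument applied within the concordance class of $L$ gives $u_c(L) \geq u_{cr}(L) \geq u_{cs}(L)$. Since $L$ is concordant to itself, each concordance minimum is bounded above by its non-concordance version: $u(L) \geq u_c(L)$, $u_r(L) \geq u_{cr}(L)$, $u_s(L) \geq u_{cs}(L)$. All of these steps are immediate from the definitions and do not need further elaboration.

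The geometric heart of the argument is the inequality $u_s(L) \geq c_s(L)$. I would pick a sequence of $k = u_s(L)$ crossing changes converting $L$ to a smoothly slice link $L'$. A single crossing change can be realized by an immersion of an annular cobordism in $S^3 \times [0,1]$ with exactly one transverse self-intersection: locally near the crossing one exchanges the two crossing strands by letting them pass through each other. Stacking these cobordisms produces an immersed cobordism from $L$ to $L'$ in $S^3 \times [0,1]$ with exactly $k$ double points, with each component an annulus. Because $L'$ is smoothly slice it bounds a collection of disjoint smoothly embedded disks in $B^4$; gluing these disks to the cobordism along $L'$ and smoothing corners produces immersed disks in $B^4$ bounded by $L$ with exactly $k$ double points, so $c_s(L) \leq k = u_s(L)$. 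The same construction gives $u_r(L) \geq c_s(L)$ and $u(L) \geq c_s(L)$ a fortiori.

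For the concordance variants I would use concordance invariance of $c_s$: if $A$ is a concordance from $L$ to $L'$ realized by disjoint smoothly embedded annuli in $S^3 \times [0,1]$, then gluing $A$ to a system of immersed disks for $L'$ in $B^4$ that realizes $c_s(L')$ produces immersed disks bounded by $L$ with the same number of double points (the annuli are embedded, so no new self-intersections arise in the glued surface), giving $c_s(L) \leq c_s(L')$; reversing $A$ yields equality. Consequently $u_{cs}(L) = u_s(L')$ for some $L' \sim L$, and then $u_{cs}(L) \geq c_s(L') = c_s(L)$ by the previous paragraph, with the analogous argument for $u_{cr}$ and $u_c$. The main obstacle is really the standard but delicate realization of crossing changes as single double points, together with its extension to concordance invariance of $c_s$; both are classical facts in smooth $4$-manifold topology, and once granted, the seven invariants are all squeezed between $c_s$ and $u$ as required.
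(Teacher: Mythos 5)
Your argument is correct and uses exactly the same geometric building blocks as the paper --- realizing each crossing change as one transverse double point of an immersed annular cobordism, stacking, and capping with slice disks --- but factors them differently. The paper reduces the whole lemma to the single inequality $u_{cs} \geq c_s$ and proves it by one gluing in $B^4$, decomposed into three overlapping shells $X_1 \cup X_2 \cup X_3$: embedded concordance annuli $A_1$ from $L$ to a minimizer $L'$, immersed annuli $A_2$ carrying the $u_s(L')$ double points, and embedded slice disks $D$. You instead split this into two standalone facts --- $u_s \geq c_s$ (crossing changes to double points, capped by slice disks) and concordance invariance of $c_s$ (embedded annuli glued to immersed disks introduce no new double points) --- and then chain $u_{cs}(L) = u_s(L') \geq c_s(L') = c_s(L)$. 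Geometrically the constructions are the same surface; your version has the small advantage of isolating concordance invariance of $c_s$ as an explicit reusable lemma (the paper leaves it implicit), while the paper's three-shell construction handles all the smoothing and transversality bookkeeping in a single pass. Either organization proves the lemma.
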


\begin{proof}
It is well known that the unlink is ribbon and a ribbon link is slice, and therefore
\[
u \geq u_{r} \geq u_{s}
\text{ and  }
 u_{c} \geq u_{cr} \geq u_{cs}
 \]
Since any link is in its own concordance class we have 
\[
u_{s} \geq u_{cs}
\text{ and }
u \geq u_{c}
\]
Combining these we see that
\[
u \geq u_{r} \geq u_{s} \geq u_{cs}
\text{ and }
u \geq u_{c} \geq u_{cr} \geq u_{cs}
\]
Therefore it suffices to show that \(u_{cs} \geq c_{s}\).  To see this, decompose \(B^{4}\) (the unit ball in \(\R^{4}\)) as \(B^{4} = X_{1} \cup X_{2} \cup X_{3}\) where here:
\[
X_{1} = \big\{\vec{x} \in \R^{4} \ \big| \ 0.6 < |\vec{x}| \leq 1 \big\} \ (\cong S^{3} \times (0,1])
\]
\[
X_{2} = \big\{\vec{x} \in \R^{4} \ \big| \ 0.3 < |\vec{x}| < 0.7 \big\} \ (\cong S^{3} \times (0,1))
\]
and
\[
X_{3} = \big\{\vec{x} \in \R^{4} \ \big| \  |\vec{x}| < 0.4 \big\} \ (\cong \mathrm{int}B^{4})
\]
We used open intersections to guarantee smoothness.

Let \(L'\) be a link in the concordance class of \(L\) that minimizes \(u_{s}\), that is, \(u_{s}(L') = u_{cs}(L)\).  Since \(L\) and \(L'\) are concordant there exists disjoint annuli \(A\), smoothly embedded in \(S^{3} \times [0,1]\), so that \(A \cap (S^{3} \times\{0\}) = L \times\{0\}\) and \(A \cap (S^{3} \times\{1\}) = L' \times\{1\}\).  Since \(X_{1} \cong S^{3} \times (0,1]\), 
there exist disjoint annuli \(A_{1}\), smoothly embedded in \(X_{1}\), so that \(A_{1} \cap 
\partial X_{1} = L\) (note that \(\partial X_{1} = S^{3}\))
and 
\[
A_{1} \cap (S^{3} \times (0.6,0.7)) = L' \times (0.6,0.7)
\]

Suppose that \(L'\) has \(\mu\) components and let \(k\) denote \(u_{s}(L')\).  Let \(L''\) be a slice link obtained from \(L'\) after \(k\) crossing changes, and let \(S\) denote the disjoint union of \(\mu\) circles (so \(L'\) is an embedding of \(S\) into \(S^{3}\)).  Then there is a smooth homotopy 
\(F: S \times (0.3,0.7) \to S^{3}\) realizing  \(k\) crossing changes, that is:
	\begin{enumerate}[(i)]
	\item \(F(\cdot,t) = L'\) for \(0.6 < t < 0.7\).
	\item \(F(\cdot,t) = L''\) for \(0.3 < t < 0.4\).
	\item There are \(k\) values \(0.4 < t_{1} < \cdots < t_{k} < .06\) so that  \(F(\cdot,t_{i})\) has exactly one transverse double point.
	\item For any other value \(t\) we have that  \(F(\cdot,t)\) is a smooth embedding of \(S\).
	\end{enumerate}
Define 
\(\widehat F: S \times (0.3,0.7) \to X_{2}\) by
\[
\widehat F(p,t) = (F(p,t), t)
\]
Denote the image of \(\widehat F\) by \(A_{2}\).  Then \(A_{2}\) are \(\mu\) smoothly immersed annuli with exactly \(k\) transverse double points.  Note that 
 \[
 A_{1} \cap (X_{1} \cap X_{2})   =
 A_{2} \cap (X_{1} \cap X_{2})   
 \]

Since \(L''\) is a smoothly slice link with \(\mu\) components it bounds \(\mu\) smooth disks disjointly embedded in \(B^{4}\).  Since \(X_{3} \cong \mathrm{int}B^{4}\), this induces a smooth embedding \(D \subset X_{3}\), where here \(D\) are \(\mu\) disks, so that \(D \cap (S^{3} \times (0.3,0.4)) = L'' \times (0.3,0.4)\).  Note that
 \[
 A_{2} \cap (X_{2} \cap X_{3})   =
 D \cap (X_{2} \cap X_{3})   
 \]
It is now clear that 
\[
A_{1} \cup A_{2} \cup D
\]
is a smooth immersion of \(\mu\) disks with exactly \(k\) double points, showing that 
\[
u_{cs}(L) \leq c_{s}(L)
\]
\end{proof}

Finally we prove:

\begin{lemma}
\label{lemma:chi4geqMuCs}
Let \(L\) be a link with \(\mu\) components.  Then
\[
\chi_{4}(L) \geq \mu - 2c_{s}(L)
\] 
\end{lemma}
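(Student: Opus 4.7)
The plan is to convert an optimal collection of immersed disks realizing $c_s(L)$ into an embedded orientable slice surface by resolving each double point via an embedded surgery, losing exactly $2$ in Euler characteristic per resolution.

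By the definition of $c_s(L)$, there exist $\mu$ smoothly immersed disks $f \colon D_1 \sqcup \cdots \sqcup D_\mu \to B^4$ with $f(\partial D_i)$ equal to the $i$th component of $L$ and with exactly $c_s(L)$ transverse double points in the interior of $B^4$. The total Euler characteristic of the source is $\mu$. The idea is to modify $f$ near each double point so as to remove the singularity, at the cost of lowering the Euler characteristic by $2$ each time, while keeping the image embedded, orientable, and boundaryless in the interior.

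At each transverse double point $p$, the image of $f$ in a small ball $B^4_\epsilon \subset B^4$ around $p$ consists of two small $2$-disks meeting transversely at $p$; these two disks have boundaries on $\partial B^4_\epsilon \cong S^3$ forming a Hopf link. The key topological input is that a Hopf link in $S^3$ bounds a smoothly embedded annulus in the $4$-ball (one can take a pushed-in Seifert annulus). I would replace the two small disks inside $B^4_\epsilon$ by such an annulus, with orientation chosen to be compatible with the orientations of the two sheets (flipping the orientation of a component of a disk locally if needed; since disks are simply connected this does not introduce inconsistency with the rest of the surface, and when both sheets come from the same $D_i$ the annulus simply tubes the two local branches together). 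This local replacement decreases Euler characteristic by $\chi(\text{annulus}) - 2\chi(\text{disk}) = 0 - 2 = -2$, removes the singular point $p$ from the image, and introduces no new intersections if $\epsilon$ is chosen small enough.

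Performing this resolution at each of the $c_s(L)$ double points yields a smoothly, properly embedded orientable surface $\Sigma \subset B^4$ with $\partial \Sigma = L$ and
$$\chi(\Sigma) = \mu - 2 c_s(L).$$
Moreover, $\Sigma$ has no closed components: every component of $\Sigma$ contains an arc (in fact a boundary circle) coming from some original $\partial D_i$, because the tubing operation only joins existing sheets (each of which already has nonempty boundary) and never caps anything off. Hence $\Sigma$ is a smooth slice surface for $L$, and by definition of $\chi_4$,
$$\chi_4(L) \geq \chi(\Sigma) = \mu - 2 c_s(L),$$
as required. The one subtlety is checking orientability of the resolution; this is routine provided the annulus replacing each crossing is chosen coherently with the sheet orientations, and I expect this to be the only technical point requiring care.
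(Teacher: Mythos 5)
Your argument is correct and follows essentially the same approach as the paper's sketch: orient the immersed disks realizing $c_{s}(L)$ and resolve each transverse double point by replacing the local picture (two small disks meeting at a point, with Hopf-link boundary on a small $S^3$) by an oriented annulus, dropping the Euler characteristic by exactly $2$ per resolution. The one small imprecision is the remark about ``flipping the orientation of a component of a disk locally''---orientations are not local, but this is harmless: regardless of the orientations the two sheets carry, a resolving annulus with matching oriented boundary always exists (choose the positive or negative pushed-in Hopf band accordingly), and this is what both you and the paper are implicitly using.
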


\begin{proof}[Sketch of proof]
This was shown by Shibuya~\cite{Shibuya}; for the convenience of the reader we sketch the proof here. Given \(\mu\) disks with \(c_{s}(L)\) double points, we endow each disk with an orientation.  One can replace a neighborhood of a double point with an annulus in a way that agrees with the chosen orientation.  The result is a smooth orientable surface \(F\) whose  Euler characteristic is \(\mu - 2c_{s}(L)\).
\end{proof}

\section{A certain signature calculation.}
\label{section:signature}

In this section we calculate the signature of certain links; this will be used in the next section.
The signature is an integer valued link invariant, defined for knots by Trotter in~\cite{trotter} and generalized for links by Murasugi in~\cite{murasugi}.

Since the signature is covered in many standard texts about knots and links we will only summarize how to calculate it:
	\begin{enumerate}
	\item  Given a link a \(L\), first construct a Seifert surface for \(L\), that is, an embedded, orientable surface \(F\), with no closed components, whose boundary is \(L\).  \(F\) need not be connected.
	\item Next construct embedded oriented curves \(\{a_{i}\}\) on \(F\) that form a basis for \(H_{1}(F;\Z)\).  
	\item Arbitrarily fix a co-orientation on each component of \(F\) (that is, directions ``above'' and ``below'' \(F\)).  We define \(a_{i}^{+}\) and \(a_{i}^{-}\) to be parallel copies of \(a_{i}\) pushed slightly above and below \(F\).
	\item The \em Seifert matrix \em \(M\) is the square matrix whose \(ij\)th entry is \(\mathrm{lk}(a_{i}^{+},a_{j}^{-})\), the linking number of \(a_{i}^{+}\) and \(a_{j}^{-}\).  Note that although the linking number is a symmetric function, the matrix \(M\) need not be symmetric.  Note also that whenever \(a_{i} \cap a_{j} = \emptyset\), the \(ij\)th entry of \(M\) is simply \(\mathrm{lk}(a_{i},a_{j})\).  
	\item The \em signature \em of \(L\), denoted \(\sigma(L)\), is the signature of the symmetric bilinear form on \(H_{1}(F;\Z)\) defined by \(M + M^{T}\); explicitly, it is the number of positive entries minus the number of  negative entries after diagonalizing \(M+M^{T}\).  
	\end{enumerate}
It is quite surprising that \(\sigma(L)\) is a link invariant, that is, independent of the choices made.  Nevertheless, it is known to be an invariant and a very useful one at that, as we shall see in the next section.  

We now define:

\begin{definition}[Whitehead Double]
\label{def:WhiteheadDouble}
Let \(L\) be a link.  A \em Whitehead double \em of \(L\) is a link obtained by taking two parallel copies of each component of \(L\) and joining them together with a clasp 
(see Figure~\ref{figure:WhiteheadDouble}). 
A Whitehead double is called \em positive \em if the crossings at the clasp are positive.  If the linking number of the two copies of each component is zero the Whitehead double is called \em untwisted\em.  It is easy to see that the untwisted positive Whitehead double is uniquely determined by \(L\).
\end{definition}

\begin{figure}
\begin{center}
\def\svgwidth{.7\textwidth}
\includesvg{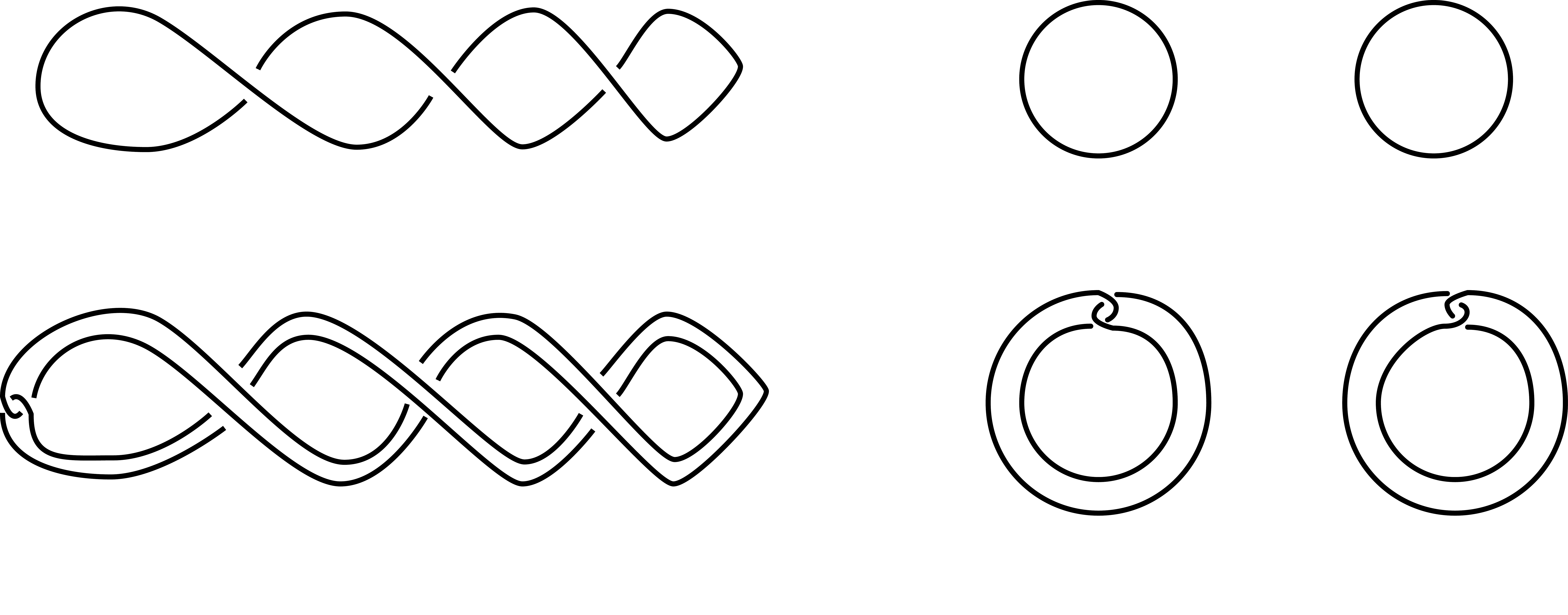}
  \caption{Some Whitehead doubles}
\label{figure:WhiteheadDouble}
\end{center}
\end{figure}

\begin{lemma}
\label{lem:SignatueOfWD1}
The signature of the positive untwisted Whitehead double of the Hopf link is \(2\).
\end{lemma}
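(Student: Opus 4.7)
The plan is to apply the five-step recipe at the start of this section to the link $L$, the positive untwisted Whitehead double of the Hopf link. For the Seifert surface I take $F = F_1 \sqcup F_2$, the disjoint union of two once-punctured tori, one for each component of $L$. Each $F_i$ is built from a flat ($0$-framed) annulus running along one of the two unknotted components of the original Hopf link, closed up by a positively twisted band realizing the clasp; the two tori can be placed in disjoint regular neighborhoods of the two Hopf components, so $F$ is embedded (and Seifert surfaces are allowed to be disconnected by the definition at the start of the section).

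On each $F_i$ I pick a pair of simple closed curves $a_{2i-1}, a_{2i}$ generating $H_1(F_i;\mathbb{Z})$ with intersection number $a_{2i-1}\cdot a_{2i}=1$: $a_{2i-1}$ is the core of the flat annulus (a parallel copy of the $i$th Hopf component with $0$-framing) and $a_{2i}$ is a curve on $F_i$ traversing the twisted band once (a small meridional loop near the clasp). I then read off the Seifert matrix $M$ entry by entry. The diagonal entries are self-framings: $M_{11}=M_{33}=0$ from the flat annulus and $M_{22}=M_{44}=+1$ from one positive full twist of the clasp. Within a single torus the intersection pairing forces $M_{12}-M_{21}=M_{34}-M_{43}=1$, and choosing the pushoffs compatibly I can take $M_{12}=M_{34}=1$ and $M_{21}=M_{43}=0$. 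Across the two tori, $a_1$ and $a_3$ are parallel copies of the two linked Hopf components, so $M_{13}=M_{31}=\pm 1$, while all other cross-entries vanish because each meridional loop $a_2, a_4$ bounds a small disk near its own clasp, disjoint from the opposite Hopf component. Replacing $a_3$ by $-a_3$ if necessary absorbs the sign of $M_{13}$ without changing the signature.

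After reordering the basis as $(a_2, a_1, a_3, a_4)$, the symmetrized form reads
\[
M+M^T=\begin{pmatrix} 2 & 1 & 0 & 0 \\ 1 & 0 & 2 & 0 \\ 0 & 2 & 0 & 1 \\ 0 & 0 & 1 & 2 \end{pmatrix},
\]
and a direct computation gives leading principal minors $2, -1, -8, -15$. Prepending $P_0 = 1$, the sign sequence $+, +, -, -, -$ has exactly one sign change, so Jacobi's criterion yields one negative and three positive eigenvalues, i.e.\ $\sigma(L) = 3 - 1 = 2$. The main obstacle lies in Step~3: carefully justifying that the cross-torus entries not involving $a_1, a_3$ truly vanish and pinning down the positive sign of the contribution of the clasp to $M_{22}=M_{44}$. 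Both require a clean geometric inspection of the twisted band in Figure~\ref{figure:WhiteheadDouble}; once the Seifert matrix is nailed down, the signature computation reduces to routine linear algebra.
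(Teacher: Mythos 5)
Your proof is correct and takes essentially the same route as the paper: a disconnected Seifert surface of two once-punctured tori, the resulting $4\times 4$ Seifert matrix, and a direct signature computation. Your Seifert matrix differs from the paper's only by a relabeling of basis curves (the paper puts the twisted-band curves first and the annulus cores second within each torus, yielding diagonal $1,0,1,0$ instead of your $0,1,0,1$, with the Hopf-linking entry at $M_{24}$ rather than $M_{13}$), and you make explicit, via the sign-change/Jacobi criterion on the leading principal minors, the ``straightforward calculation'' the paper leaves to the reader.
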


\begin{proof}
An untwisted Whitehead double of the Hopf link bounds \(F\), two disjointly embedded once punctured tori 
(see Figure~\ref{figure:WDofHopf}). 
In that figure each torus is seen as a ``flat'' annulus with a twisted band attached near the top.  The tori are co-oriented to the positive side above the ``flat'' annulus.  In that figure we marked \(a_{1},a_{2},a_{3},a_{4}\), ordered generators for \(H_{1}(F,\Z)\).  
This gives the Seifert matrix:
\begin{figure}
\begin{center}
\def\svgwidth{\textwidth}
\includesvg{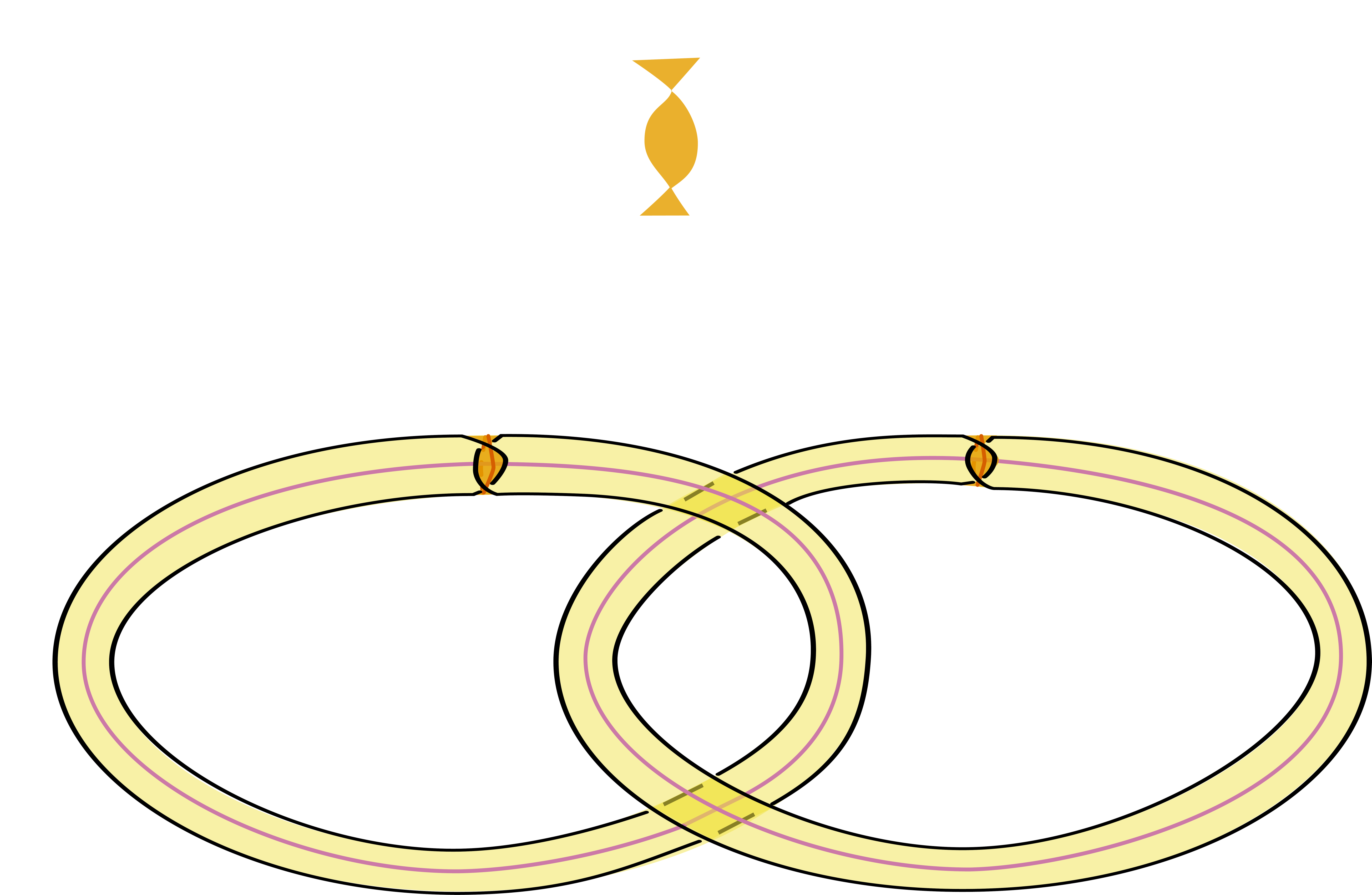}
  \caption{Top: the untwisted Whitehead double of an unknot bounds a flat annulus with a twisted band. Bottom: the positive untwisted Whitehead double of the Hopf link, a Seifert surface and generators of its homology.}
\label{figure:WDofHopf}
\end{center}
\end{figure}
\begin{equation}
\label{eq:SeifertMatrix}
M = 
\begin{pmatrix}
1 & 0 &  0 & 0 \\
1 & 0  & 0 & 1 \\
0 & 0 &   1 & 0 \\
0 & 1  & 1 & 0 
\end{pmatrix}
\end{equation}
Symmetrizing \(M\) we get
\[
M + M^{T}= 
\begin{pmatrix}
2 & 1 &  0 & 0 \\
1 & 0  & 0 & 2 \\
0 & 0 &  2 & 1 \\
0 & 2  & 1 & 0 
\end{pmatrix}
\]
A straightforward calculation shows that the signature is \(2\). 
\end{proof}

Before stating the next lemma we describe the type of link we will be dealing with.  Let \(L\) be a link with an even number of components, say \(2n\), so that \(L\) can be written as \(L = L_{1} \cup \cdots \cup L_{n}\) satisfying the following conditions:
	\begin{enumerate}
	\item \((\forall i) \ L_{i}\) is the positive untwisted Whitehead double of the Hopf link.
	\item Each \(L_{i}\) bounds \(F_{i}\), a co-oriented disjoint union of two once punctured tori, as in Figure~\ref{figure:WDofHopf}.  Let \(a_{1}^{(i)},a_{2}^{(i)},a_{3}^{(i)},a_{4}^{(i)}\) denote the generators for \(H_{1}(F_{i};\Z)\), again as in the figure.  
	\item \((\forall i \neq i') \ \ F_{i} \cap F_{i'} = \emptyset\). 
	\item \((\forall i \neq i')(\forall j,j') \ \ \mathrm{lk}(a_{j}^{(i)}, a_{j'}^{(i')}) = 0\). 
	\end{enumerate}
For a link \(L\) as above we have:

\begin{lemma}
\label{lem:SignatueOfWD2}
\(\sigma(L) = 2n\).
\end{lemma}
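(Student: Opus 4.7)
The plan is to compute $\sigma(L)$ directly from the definition by exhibiting a natural Seifert surface and Seifert matrix for $L$, and showing that both decompose as an orthogonal sum of the pieces coming from each $L_i$. Then Lemma~\ref{lem:SignatueOfWD1} will give the answer immediately by additivity of the signature.

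First I would take $F := F_1 \sqcup \cdots \sqcup F_n$ as the Seifert surface for $L$; condition~(3) guarantees that this disjoint union is in fact an embedded orientable surface with boundary $L$, and we co-orient each $F_i$ as in Figure~\ref{figure:WDofHopf}. The ordered basis $a_1^{(1)},a_2^{(1)},a_3^{(1)},a_4^{(1)},a_1^{(2)},\dots,a_4^{(n)}$ is a basis for $H_1(F;\mathbb{Z}) = \bigoplus_{i=1}^n H_1(F_i;\mathbb{Z})$.

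Next I would assemble the Seifert matrix $M$ of $L$ in this basis. For indices belonging to the same block $i$, the $(j,j')$-entry is $\mathrm{lk}\bigl((a_j^{(i)})^+,(a_{j'}^{(i)})^-\bigr)$, which is exactly the entry of the Seifert matrix $M_0$ computed in the proof of Lemma~\ref{lem:SignatueOfWD1} (equation~\eqref{eq:SeifertMatrix}). For indices coming from different blocks $i \neq i'$, the curves $a_j^{(i)}$ and $a_{j'}^{(i')}$ are disjoint (by condition~(3)), so the corresponding entry of $M$ is the ordinary linking number $\mathrm{lk}(a_j^{(i)},a_{j'}^{(i')})$, which vanishes by condition~(4). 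Therefore $M$ is block diagonal with $n$ copies of $M_0$ along the diagonal, and so is $M+M^T$.

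Finally, the signature of a block-diagonal symmetric bilinear form is the sum of the signatures of the blocks: if each block is diagonalized individually by a congruence $P_i^T(M_0+M_0^T)P_i$, then $P := P_1 \oplus \cdots \oplus P_n$ simultaneously diagonalizes $M+M^T$ and the numbers of positive and negative diagonal entries simply add. By Lemma~\ref{lem:SignatueOfWD1} each block contributes signature $2$, so
\[
\sigma(L) \;=\; \sum_{i=1}^n \sigma(L_i) \;=\; 2n,
\]
as claimed. The only nontrivial input is the verification that the off-diagonal blocks of $M$ vanish, and this is exactly what conditions~(3) and~(4) were designed to ensure; everything else is a routine assembly of Lemma~\ref{lem:SignatueOfWD1}.
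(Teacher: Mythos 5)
Your argument matches the paper's proof essentially verbatim: both take $F=F_1\sqcup\cdots\sqcup F_n$ as the Seifert surface, observe that conditions~(3) and~(4) force the off-diagonal blocks of the Seifert matrix to vanish (so that $M+M^T$ is block-diagonal with copies of the block from Lemma~\ref{lem:SignatueOfWD1}), and conclude by additivity of the signature over orthogonal summands. The only difference is that you spell out the block-diagonal congruence a bit more explicitly, which is fine but not a different route.
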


\begin{proof}
With the conventions above we have that \(F = F_{1} \cup \cdots \cup F_{n}\) is an oriented Seifert surface for \(L\).  We will use
\[
a_{1}^{(1)},a_{2}^{(1)},a_{3}^{(1)},a_{4}^{(1)},\dots,a_{3}^{(n)},a_{4}^{(n)}
\]
as an ordered set of generators for \(H_{1}(F;\Z)\).   We obtain a Seifert matrix that along the diagonal has \(4 \times 4\) \em  identical \em blocks, each identical to the Seifert matrix of the positive untwisted Whitehead double of the Hopf link (given in~(\ref{eq:SeifertMatrix})).  For generators corresponding to \(i \neq i'\) we have, by assumption, that \(\mathrm{lk}(a_{i}^{(j)}, a_{i'}^{(j')}) = 0\).   Thus all the remaining entries are zero.   This shows that the signature is the sum of the signatures of the \(4 \times 4\) blocks, and the lemma follows from Lemma~\ref{lem:SignatueOfWD1}.
\end{proof}

\section{Unlinking, $4$-ball Euler characteristic, and intermediate invariants.}
\label{section:4ball}

\def\lwh{\ensuremath{L_{\Phi}^{\text {\tiny WH}}}}
\def\kwh{\ensuremath{\kappa_{x_{i}}^{\text {\tiny WH}}}}
\def\kwhn{\ensuremath{\kappa_{\neg x_{i}}^{\text {\tiny WH}}}}

\def\lslice{\ensuremath{L_{\text {\tiny SLICE}}}}

In this section we will show that several link invariants are \NP-hard (see the invariants defined in Definition~\ref{dfn:VariousLinkInvariants}).
Recall the definition of the positive untwisted Whitehead double (Definition~\ref{def:WhiteheadDouble} and Figures~\ref{figure:WhiteheadDouble} and~\ref{figure:WDofHopf}).  One last piece of background we will need is a result of A. Levine~\cite{levine}. Theorem~1.1 of~\cite{levine} implies, in particular:

\begin{lemma}
\label{lem:AdamLevine}
The untwisted positive Whitehead double of the Hopf link, and that of the Borromean rings, are not smoothly slice.
\end{lemma}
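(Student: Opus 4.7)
The plan is to handle the two cases separately, since the Borromean case is strictly harder and requires finer concordance obstructions than the Hopf case. For the Whitehead double of the Hopf link, the signature computation in Lemma~\ref{lem:SignatueOfWD1} already suffices: the Murasugi--Tristram inequality for links bounding smooth surfaces in $B^4$ asserts that if $L$ bounds an orientable surface $F \subset B^4$ with no closed components, then $|\sigma(L)| \leq \mu(L) - \chi(F)$. A smoothly slice link bounds $\mu(L)$ disjoint disks, forcing $\chi(F) = \mu(L)$ and hence $\sigma(L) = 0$. Since $\sigma = 2$ for the Hopf-link Whitehead double by Lemma~\ref{lem:SignatueOfWD1}, it cannot be smoothly slice.

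The Borromean case is the substantive one: all pairwise linking numbers of the Whitehead-doubled Borromean rings vanish, and the Tristram--Levine signatures vanish as well, so abelian obstructions are insufficient. The plan here is to use Heegaard Floer correction terms. Specifically, let $L'$ denote the positive untwisted Whitehead double of the Borromean rings and $\Sigma$ its double-branched cover. If $L'$ were smoothly slice, the double cover of $B^4$ branched over the slice disks would be a smooth rational homology $4$-ball $W$ with $\partial W = \Sigma$, and a theorem of Ozsv\'ath--Szab\'o would force the correction terms $d(\Sigma, \mathfrak{s})$ to vanish along a Lagrangian subgroup of $H_1(\Sigma;\mathbb{Z})$. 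The key step is then to produce a surgery description of $\Sigma$, compute (or at least bound) the relevant $d$-invariants, and verify that no such Lagrangian exists.

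The main obstacle is precisely this Heegaard Floer calculation for the Borromean case. Even a moderate direct approach requires analyzing invariants of a surgery diagram for the branched double cover of a $3$-component link, which is nontrivial. An alternative route, which is Levine's approach in~\cite{levine}, replaces the direct $d$-invariant computation by a satellite-operator framework: one views Whitehead doubling as a satellite operator acting on knots and links and uses link Floer homology invariants such as $\tau$ or $\epsilon$ of auxiliary companion knots to rule out sliceness. This requires substantially more Heegaard Floer setup than is developed in the present paper, which is why we simply invoke Levine's result rather than reproving it.
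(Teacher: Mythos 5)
The paper offers no proof of this lemma: it simply invokes Theorem~1.1 of Levine's paper~\cite{levine} for both links. Your proposal is therefore not the same as the paper's, and the difference is worth noting. For the Hopf link case you supply a genuine elementary argument: the signature computed in Lemma~\ref{lem:SignatueOfWD1} is $2$, and the Murasugi--Tristram inequality (equivalently, the classical fact that a smoothly slice link has vanishing signature, which the paper already uses in the form of Equation~(\ref{equation:murasugi})) immediately rules out sliceness. This is correct, reuses machinery the paper has already set up, and is more self-contained than a citation. One small caveat: the inequality you write, $|\sigma(L)|\leq \mu(L)-\chi(F)$, is a weaker variant of the usual one $|\sigma(L)|\leq \beta_1(F)=2\sum g(F_i)+\mu-\nu$, but both yield $\sigma=0$ in the slice case, so the argument goes through.

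For the Borromean case your proposal ultimately does exactly what the paper does, namely cite Levine, after first surveying why the abelian obstructions fail and sketching two plausible Heegaard Floer routes ($d$-invariants of the branched double cover, or Levine's satellite-operator/$\tau$-style argument). Your assertion that the Tristram--Levine signatures of the Whitehead-doubled Borromean rings vanish is plausible but is stated without verification; since it is only motivational and you fall back on citing Levine anyway, this does not affect correctness. Net assessment: your proposal is correct. It partially improves on the paper by giving the Hopf case an elementary signature proof, and it coincides with the paper on the Borromean case, for which citing~\cite{levine} is the right call given that a self-contained argument would require Heegaard Floer technology well beyond the scope of this paper.
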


We are now ready to describe our construction:

\paragraph{The construction of \lwh.}
Given a 3-SAT instance \(\Phi\), recall the link \(L_{\Phi}\) from Part~\ref{p:sublink} (Figure~\ref{figure:Lphi}), and let \(\lwh\) be its positive untwisted Whitehead double.  Note that there is a natural bijection between components before and after taking a Whitehead double; let \kwh\ denote the component corresponding to \(\kappa_{x_{i}}\) and  let \kwhn\ denote the component corresponding to \(\kappa_{\neg x_{i}}\)

  \begin{remark} If we in addition assume\footnote{This can be easily
    assumed without affecting \NP-hardness. Similarly as in the proof of
    Lemma~\ref{l:sat_variant}, we can replace $(\ell \vee \ell
    \vee \ell)$ with $(\ell \vee \neg t_1 \vee \neg t_2)$ where $t_1$ and $t_2$
    are new variables forced to be $\TRUE$ via the formula $\Psi$ from the proof
    of Lemma~\ref{l:sat_variant}.} that no clause of \(\Phi\) is of the form \((\ell
  \vee \ell \vee \ell)\) then, by
  construction, every component of \(L_{\Phi}\) is unknotted.  Since the
  Whitehead double of the unknot is unknotted, we may assume that the
  components of   \(\lwh\)  in Theorem~\ref{thm:UnlikingEtc} are all unknotted.
\end{remark}

Recall the definition of intermediate invariants (Definition~\ref{dfn:IntermediateInvariant}) and the examples given in Lemma~\ref{lemma:IntermediateInvariant}.   The goal of this section is to prove:

\begin{theorem}\label{t:equivalences}
\label{thm:UnlikingEtc}
Given a 3-SAT instance \(\Phi\) with \(n\) varaibles, let \lwh\ be the link constructed above.  Then the following are equivalent, where here \(i\) is any intermediate invariant:
	\begin{enumerate}
	\item \(\Phi\) is satisfiable.  
	\item \(u(\lwh) = n\).
	\item \(i(\lwh) = n\).
	\item \(c_{s}(\lwh) = n\).
	\item \(\chi_{4}(\lwh) = 0\).
	\item \(\lwh\) admits a smoothly slice sublink with \(n\) components.
	\end{enumerate}
\end{theorem}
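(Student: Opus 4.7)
The strategy is to establish a baseline of unconditional lower bounds from the signature calculation in Section~\ref{section:signature}, to prove the combinatorial equivalence $(1)\Leftrightarrow(6)$ using Part~\ref{p:sublink} and Lemma~\ref{lem:AdamLevine}, and to close the chain of equivalences among (2)--(5) via constructive upper bounds coming from a satisfying assignment.

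First I would verify $\sigma(\lwh)=2n$ via Lemma~\ref{lem:SignatueOfWD2}. The link $\lwh$ organizes into the $n$ same-variable pairs $\{\kwh,\kwhn\}$, each forming a positive untwisted Whitehead double of a Hopf link locally at the corresponding variable. Since the pairwise linking numbers between $\kappa_\ell$'s from distinct variables in $L_\Phi$ vanish (inherited from the pairwise zero linkings of the Borromean rings and the non-weaving convention on the bands), one can choose pairwise disjoint Seifert surfaces $F_1,\dots,F_n$ satisfying the cross-linking hypotheses of Lemma~\ref{lem:SignatueOfWD2}, giving $\sigma(\lwh)=2n$. The standard Murasugi-type inequality $|\sigma(L)|\leq \mu(L)-\chi_4(L)$ then forces $\chi_4(\lwh)\leq 0$ unconditionally, and Lemma~\ref{lemma:chi4geqMuCs} yields $c_s(\lwh)\geq n$ unconditionally. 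Via $u\geq i\geq c_s$, every intermediate invariant as well as $u$ is bounded below by $n$ unconditionally.

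Next I would establish $(1)\Leftrightarrow(6)$. For $(1)\Rightarrow(6)$: Claim~\ref{claim:SatImpliesTrivialUnlink} gives a trivial $n$-sublink of $L_\Phi$; its Whitehead double is an unlink of $n$ Whitehead-doubled unknots, which are themselves unknotted by the remark preceding the theorem, and therefore smoothly slice. For $(6)\Rightarrow(1)$: suppose $\mathcal{S}\subset\lwh$ is a slice $n$-sublink, and note that every sublink of $\mathcal{S}$ is also slice (by restriction of slice disks). If $\mathcal{S}$ contained both $\kwh$ and $\kwhn$ for some $i$, then this pair, whose underlying link in $L_\Phi$ consists of two unknots with linking $\pm 1$, is ambient isotopic to the positive untwisted Whitehead double of a Hopf link, contradicting Lemma~\ref{lem:AdamLevine}. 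Hence $\mathcal{S}$ selects exactly one of $\{\kwh,\kwhn\}$ per variable, defining a truth assignment as in Claim~\ref{claim:TrivialUnlinkImpliesSat}. If some clause $(\ell_1\vee\ell_2\vee\ell_3)$ were unsatisfied then all three corresponding Whitehead-doubled components would lie in $\mathcal{S}$, yielding a Whitehead-doubled Borromean sublink and again contradicting Lemma~\ref{lem:AdamLevine}; so $\Phi$ is satisfiable.

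It remains to close the cycle $(1)\Rightarrow(2)\Rightarrow(3)\Rightarrow(4)\Rightarrow(5)\Rightarrow(6)$. For $(1)\Rightarrow(2)$, I would mimic the simplification strategy of Part~\ref{p:sublink}: for each variable, a single crossing change at the clasp of the true-literal Whitehead-doubled component decouples that variable's pair, and because each clause contains a satisfied literal the corresponding Borromean ring in every clause gadget then falls apart; the $n$ crossing changes leave a diagram isotopic to an unlink, so $u(\lwh)\leq n$. The implications $(2)\Rightarrow(3)\Rightarrow(4)$ follow from $u\geq i\geq c_s$ combined with the baseline $\geq n$ bounds, forcing equalities. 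The implication $(4)\Rightarrow(5)$ is immediate from Lemma~\ref{lemma:chi4geqMuCs} and $\chi_4(\lwh)\leq 0$. The main obstacle I anticipate is $(5)\Rightarrow(6)$: from mere existence of a surface of Euler characteristic $0$ one must extract a slice $n$-sublink. The equality case $|\sigma(\lwh)|=\mu(\lwh)-\chi_4(\lwh)=2n$ of the signature inequality should force the surface to decompose essentially into $n$ annuli---one per variable pair---and from this constrained decomposition, together with the rigidity supplied by the clause-gadget Borromean rings and Lemma~\ref{lem:AdamLevine}, one recovers a slice $n$-sublink (equivalently, satisfiability of $\Phi$). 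This structural step is the technical heart of the argument.
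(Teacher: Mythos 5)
Your overall strategy matches the paper's: signature computation via Lemma~\ref{lem:SignatueOfWD2}, Murasugi and Lemma~\ref{lemma:chi4geqMuCs} for the unconditional lower bounds, crossing changes at the clasps for $(1)\Rightarrow(2)$, and Lemma~\ref{lem:AdamLevine} for $(6)\Rightarrow(1)$. These parts are correct and organized essentially as in the paper.

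The one genuine gap is the step you flag yourself, $(5)\Rightarrow(6)$ (the paper's step~(d.II)), and your sketch of how it should go is off target. You predict the slice surface should ``decompose essentially into $n$ annuli, one per variable pair.'' This cannot be what happens: the argument you need actually shows that each component of the slice surface $F$ has \emph{exactly one} boundary circle, so annuli (two boundary circles) cannot occur. The correct derivation uses a sharper form of Murasugi's inequality than the one you quote. You invoke $|\sigma(L)|\leq \mu(L)-\chi_4(L)$, but this only yields $\chi_4\leq 0$; it does not control the number of components $\nu$ of $F$. The paper instead uses $|\sigma|\leq 2\sum_i g(F_i)+\mu-\nu$, which, after an Euler-characteristic bookkeeping, is equivalent to $|\sigma|\leq \nu-\chi(F)$. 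Combined with $|\sigma|=2n$ and $\nu\leq 2n$ (no closed components, so at most one component per boundary circle), this forces $\chi(F)\leq 0$, and when $\chi(F)=0$ it forces $\nu=2n$, i.e.\ each component has exactly one boundary. One then counts: if $r$ components of $F$ are disks ($\chi=+1$) and the remaining $2n-r$ have $\chi\leq -1$, then $0=\chi(F)\leq r-(2n-r)=2r-2n$, giving $r\geq n$, and the boundaries of those $r$ disks provide the desired slice $n$-sublink. So the missing ingredient is a disk-counting argument driven by the sharper $\nu$-dependent signature bound, not an annulus decomposition, and with only the $\mu$-form of the inequality you cannot get off the ground.

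One minor imprecision: for $(1)\Rightarrow(6)$ you cite the remark preceding the theorem to conclude the Whitehead-doubled components are unknotted, but the remark's extra hypothesis on $\Phi$ is irrelevant here; the components of the \emph{trivial} sublink from Claim~\ref{claim:SatImpliesTrivialUnlink} are unknots by definition, so their untwisted Whitehead doubles are unknots unconditionally.
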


Theorem~\ref{t:main2}(b)--(d) directly follows from Theorem~\ref{t:equivalences}.

\begin{proof}
 
The proof of this Theorem is split in the following steps:
\begin{enumerate}[(a)]
\item $\Phi$ is satisfiable implies that $u(\lwh) \leq n$.
\item $c_s(\lwh) \leq i(\lwh) \leq u(\lwh)$.
\item $\chi_4(\lwh) \geq 2n - 2c_s(\lwh)$.
\item If \(\chi_4(\lwh) \geq 0\) then the following two conditions hold:
	\begin{enumerate}[(d.I)]
	\item \(\chi_4(\lwh) = 0\) 
	\item $\lwh$ admits a smoothly slice sublink with $n$ components.
	\end{enumerate}
\item If $\lwh$ admits a smoothly slice sublink with $n$ components, then $\Phi$ is satisfiable.
\end{enumerate}

We first show how~(a)---(e) prove Theorem~\ref{thm:UnlikingEtc}.  Assume first that \(\Phi\) is satisfiable.  Then by~(a) and~(b) we have that \(c_{s}(\lwh) \leq n\), and by~(c) we have that \(\chi_{4}(\lwh) \geq 0\).  Then~(d.I) shows that \(\chi_{4}(\lwh) = 0\).  Working our way back, we see that \(c_{s}(\lwh) = i(\lwh) = u(\lwh) = n\), establishing $(1) \Rightarrow (2) \Rightarrow (3) \Rightarrow (4) \Rightarrow (5)$.  In addition,~(d.II) shows directly that \((5) \Rightarrow (6)\).  Finally,~(e) establishes \((6) \Rightarrow (1)\).

\bigskip\noindent
We complete the proof of Theorem~\ref{thm:UnlikingEtc} by establishing~(a)---(e):
\begin{enumerate}[(a)]
\item Suppose we have a satisfying assignment for \(\Phi\) (for this implication, {\it cf.} the proof of Theorem~\ref{thm:TirvialSublinkNPcomplete}).  
By a single crossing change we resolve the clasp of every component that correspond to a satisfied literal, that is, if \(x_{i} = \TRUE\) we change one of the crossings of the clasp of \kwh\ and if \(x_{i} = \FALSE\) we change one of the crossings of the clasp of \kwhn; as a result, the components corresponding to satisfied literals now form an unlink that is not linked with the remaining components, and we can isotope them away. 
Since the assignment is satisfying, from each copy of the Borromean rings at least one ring is removed, and the remaining components retract into the first \(n\) disks that contained the Hopf links.  In each disk we have an untwisted Whitehead double of the unknot which is itself an unknot.  Thus we see that the unlink on \(2n\) component is obtained, showing that \(u(\lwh) \leq n\).

\item By definition of intermediate invariant we have that   \(c_{s} \leq i \leq u\).

\item This is Lemma~\ref{lemma:chi4geqMuCs}.

\item Let \(F\) be a slice surface for \(\lwh\); recall that \(F\) has no closed components.
We will use the following notation:
	\begin{itemize}
	\item \(\nu\) is the number of components of \(F\); 
	\item  \(\{F_{i}\}_{i=1}^{\nu}\) are the components of \(F\); 
	\item \(g(F_{i})\) and \(\# \partial F_{i}\) denote the genus of \(F_{i}\) and the number of its boundary components.
	\end{itemize}
Murasugi~\cite[Equation~9.4 on Page~416]{murasugi} proved:
\begin{equation}
\label{equation:murasugi}
\big|\sigma(\lwh)\big| \leq 2\Big(\sum_{i=1}^{\nu}g(F_{i})\Big) + \mu - \nu  
\end{equation}
where here \(\mu\) denotes the number of components of \(\lwh\); applying Murasugi's Theorem with \(\mu = 2n\)
allows us we estimate \(\beta_{1}(F)\), the first Betti number of \(F\):

\begin{claim}
\label{c:BetaGeq2n}
\(\beta_{1}(F) \geq 2n\).
\end{claim}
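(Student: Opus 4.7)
Our strategy is to apply Murasugi's inequality from equation~(\ref{equation:murasugi}), which rearranges as
\[
\beta_{1}(F) \;=\; 2\sum_{i=1}^{\nu} g(F_{i}) + \mu - \nu \;\geq\; |\sigma(\lwh)|.
\]
Therefore it suffices to prove $\sigma(\lwh) = 2n$, and for this we invoke Lemma~\ref{lem:SignatueOfWD2}.

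Group the $4n$ components of $\lwh$ into $n$ collections by writing $\lwh = L_{1} \cup \cdots \cup L_{n}$ with $L_{i} = \kwh \cup \kwhn$. We first observe that each $L_{i}$ is the positive untwisted Whitehead double of the Hopf link. Indeed, the two-component sublink $\kappa_{x_{i}} \cup \kappa_{\neg x_{i}} \subset L_{\Phi}$ is itself a Hopf link: once every other component of $L_{\Phi}$ is deleted, the Borromean ring regions unlock (Borromean rings are pairwise unlinked, so their remaining components disperse), and the band excursions of $\kappa_{x_{i}}$ and $\kappa_{\neg x_{i}}$ through those regions can be isotoped back into the $i$th variable disk, leaving exactly the Hopf link from Figure~\ref{figure:Lphi}. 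Consequently each $L_{i}$ bounds the standard Seifert surface $F_{i}$ from Figure~\ref{figure:WDofHopf}, formed by two once-punctured tori placed inside small tubular neighborhoods of $\kappa_{x_{i}}$ and $\kappa_{\neg x_{i}}$.

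Condition (3) of Lemma~\ref{lem:SignatueOfWD2} is arranged by choosing pairwise disjoint tubular neighborhoods of all $2n$ components of $L_{\Phi}$, which is possible since $L_{\Phi}$ is an embedded link in $S^{3}$; the surfaces $F_{i}$ then lie in disjoint regions of $S^{3}$. For condition (4), each punctured torus in $F_{i}$ contributes two generators to $H_{1}$: a clasp meridian (supported in a small neighborhood of the clasp) and a $0$-framed longitude parallel to the underlying component of $L_{\Phi}$. A clasp meridian has zero linking number with any curve disjoint from its small support, while two longitudes from distinct $F_{i}$ and $F_{i'}$ link each other with number $\lk(\kappa, \kappa')$ for distinct, non-Hopf-paired components $\kappa, \kappa'$ of $L_{\Phi}$; all such numbers vanish because the only nonzero pairwise linkings in $L_{\Phi}$ occur between Hopf-paired components inside a single variable disk, and the Borromean ring structure contributes nothing. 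Lemma~\ref{lem:SignatueOfWD2} then yields $\sigma(\lwh) = 2n$, and Murasugi's inequality gives $\beta_{1}(F) \geq 2n$, as claimed.

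The main subtlety is verifying that $\kappa_{x_{i}} \cup \kappa_{\neg x_{i}}$ really forms a Hopf link in $S^{3}$ after passing to that sublink: this exploits the fact that Borromean rings are pairwise unlinked, so that the bands through Borromean ring regions become homotopically inessential (and can be retracted) the moment one component of each Borromean family is deleted.
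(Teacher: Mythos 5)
Your proof is correct and follows essentially the same route as the paper: apply Murasugi's inequality~(\ref{equation:murasugi}), which after noting \(\beta_1(F_i) = 2g(F_i) + \#\partial F_i - 1\) becomes \(\beta_1(F) \geq |\sigma(\lwh)|\), and then invoke Lemma~\ref{lem:SignatueOfWD2} to get \(\sigma(\lwh) = 2n\). The one genuine difference is that you explicitly verify that \(\lwh\) satisfies the hypotheses of Lemma~\ref{lem:SignatueOfWD2} --- that each \(\kwh \cup \kwhn\) is the positive untwisted Whitehead double of a Hopf link (since the sublink \(\kappa_{x_i}\cup\kappa_{\neg x_i}\subset L_\Phi\) is a Hopf link, as the Borromean pieces are pairwise unlinked), that the \(F_i\) can be taken inside disjoint tubular neighborhoods, and that the cross linking numbers vanish --- whereas the paper simply cites the lemma and leaves these checks to the reader. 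Your verification is sound, and the observation that the only nontrivial pairwise linking in \(L_\Phi\) occurs within a Hopf pair is exactly the right point. One small slip: you write that \(\lwh\) has ``\(4n\) components''; in fact it has \(2n\), since Whitehead doubling rejoins the two parallel copies into a single circle per component, so the component count is unchanged from \(L_\Phi\). This is a harmless miscount and does not affect the argument.
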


\begin{proof}

Since \lwh\ has \(2n\) components we have that \(\sum_{i=1}^{\nu} \# \partial F_{i} = 2n\); this is used in \(\star\) below:
Since no component of \(F\) is closed, \(\beta_{1}(F_{i}) = 2g(F_{i}) + \#\partial F_{i} - 1\); this is used in \(\star\star\) below.
	\begin{align*}
	2n &= \big|\sigma(\lwh)\big| & \text{Lemma~\ref{lem:SignatueOfWD2}}  \nonumber \\
	&\leq  2\Big(\sum_{i=1}^{\nu}g(F_{i})\Big) + 2n - \nu & \text{Equation}~(\ref{equation:murasugi})  \nonumber \\
	&=  \sum_{i=1}^{\nu}\Big(2g(F_{i}) + \#F_{i} - 1 \Big) & \star \nonumber \\
	&= \sum_{i=1}^{\nu} \beta_{1}(F_{i}) & \star\star \nonumber \\
	&= \beta_{1}(F) \label{equation:beta1}
	\end{align*}
This completes the proof of the claim. 
\end{proof}

Next we prove:

\begin{claim}
\label{c:NuEquasN}
\(\nu \geq 2n + \chi(F)\).
\end{claim}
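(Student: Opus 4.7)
The plan is to prove the claim by a direct calculation, combining Claim~\ref{c:BetaGeq2n} with the standard Euler characteristic formula for orientable surfaces with boundary and no closed components.

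First, I would record the basic identities. Since no $F_i$ is closed, each $F_i$ is an orientable surface of genus $g(F_i)$ with $\#\partial F_i \geq 1$ boundary components, so
\[
\chi(F_i) = 2 - 2g(F_i) - \#\partial F_i \quad \text{and} \quad \beta_1(F_i) = 2g(F_i) + \#\partial F_i - 1.
\]
Summing and using that \(\lwh\) has \(2n\) components so \(\sum_{i=1}^{\nu}\#\partial F_i = 2n\), this gives
\[
\chi(F) = 2\nu - 2\sum_{i=1}^{\nu} g(F_i) - 2n \qquad \text{and} \qquad \beta_1(F) = 2\sum_{i=1}^{\nu} g(F_i) + 2n - \nu.
\]

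Next I would plug in Claim~\ref{c:BetaGeq2n}, which gives \(\beta_1(F) \geq 2n\). Substituting the second identity above yields
\[
2\sum_{i=1}^{\nu} g(F_i) + 2n - \nu \;\geq\; 2n, \qquad \text{hence} \qquad 2\sum_{i=1}^{\nu} g(F_i) \;\geq\; \nu.
\]

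Finally I would use the first identity to rewrite the target inequality:
\[
2n + \chi(F) \;=\; 2\nu - 2\sum_{i=1}^{\nu} g(F_i),
\]
so \(\nu \geq 2n + \chi(F)\) is equivalent to \(2\sum g(F_i) \geq \nu\), which is precisely the inequality just derived. This completes the proof. There is no real obstacle here: the whole content is Claim~\ref{c:BetaGeq2n} (already proved via the signature bound of Murasugi and Lemma~\ref{lem:SignatueOfWD2}), and the step above is purely bookkeeping with the formulas for \(\chi\) and \(\beta_1\) of a surface with boundary.
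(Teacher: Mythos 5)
Your proof is correct and rests on the same two ingredients as the paper's: Claim~\ref{c:BetaGeq2n} together with the identity $\nu = \beta_0(F) = \chi(F) + \beta_1(F)$ (valid because $F$ has no closed components, so $\beta_2(F)=0$). Your bookkeeping is actually a bit more streamlined than the paper's, which reaches the same line $\nu = \beta_1(F) + \chi(F) \geq 2n + \chi(F)$ via an unnecessary detour that separates out the disk components of $F$; but the argument is essentially the same.
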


\begin{proof}
Let \(F_{\leq 0}\) be the \(\nu - r\) components of \(F\) that have nonpositive Euler characteristic; after reordering we may assume that the components of  \(F_{\leq 0}\) are \(F_{1},\dots,F_{\nu-r}\).  
Since the disk components of \(F\) contribute exactly \(+r\) to \(\chi(F)\) we have
\[
\sum_{i=1}^{\nu-r} \chi(F_{i}) = -r + \chi(F)
\]
Solving for \(r\) we see:
\begin{align}
\label{eq:EulerOfFi}
r &= \sum_{i=1}^{\nu-r} \Big(-\chi(F_{i})\Big) + \chi(F) \nonumber \\
   &= \sum_{i=1}^{\nu-r} \Big(2g(F_{i}) + \#F_{i} -2 \Big) + \chi(F) 
\end{align}
Disk components contribute zero to \(\beta_{1}(F)\), so \(\beta_{1}(F) = \sum_{i=1}^{\nu-r} \beta_{1}(F_{i})\).  
Thus we have (here \(\star\star\) is as in Claim~\ref{c:BetaGeq2n}):
\begin{align*}
\nu &= r + (\nu-r) \\ 
& = \sum_{i=1}^{\nu-r} \big(2g(F_{i}) + \#\partial F_{i} -2 \big) + \chi(F) + (\nu-r)  & \text{Equation}~(\ref{eq:EulerOfFi}) \\
& = \sum_{i=1}^{\nu-r} \big(2g(F_{i}) + \#\partial F_{i} - 1\big) +\chi(F)  \\
& = \sum_{i=1}^{\nu-r} \beta_{1}(F_{i})  + \chi(F) & \star\star \\
& = \beta_{1}(F) + \chi(F) \\
& \geq 2n + \chi(F) & \text{Claim}~\ref{c:BetaGeq2n} 
\end{align*}
This completes the proof of the claim. 
\end{proof}
Since \(F\) has no closed components we have that \(\nu \leq 2n\).  Thus by 
Claim~\ref{c:NuEquasN} 
we have that \(\chi(F) \leq 0\); this establishes Conclusion~(d.I). 

Now suppose that \(\chi(F) = 0\).  By 
Claim~\ref{c:NuEquasN} 
we have that \(\nu = 2n\), that is, \(F\) has exactly \(2n\) components.  Thus each component of \(F\) has exactly one boundary component.  This means that \(F_{\leq 0}\) has exactly \(2n-r\) components and each has strictly negative Euler characteristic; thus \(\chi(F_{\leq 0}) \leq -(2n-r)\).  The \(r\) disk components of \(F\) contribute exactly \(+r\) to \(\chi(F)\) we have that 
\[0 = \chi(F) \leq -(2n-r) + r = 2r -2n\]
that is,
\[
n  \leq r
\]
By construction, the link formed by the boundaries of the disks \(\{F_{\nu-r+1},\dots,F_{\nu}\}\) is an \(r\) component smoothly slice sublink of \lwh, and since \(r \geq n\), this establishes Conclusion~(d.II).

\item Finally, assume that \lwh\ admits an \(n\) component smoothly slice
  sublink \lslice.  By Lemma~\ref{lem:AdamLevine} we have that the positive
  untwisted Whitehead double of the Hopf link is not a sublink of \lslice\ and
  therefore for each \(i\) exactly one of \kwh\ and \kwhn\ is in \lslice.  If
  \kwh\ is in \lslice\ we set \(x_{i} = \FALSE\) and if \kwhn\ is in \lslice\
  we set \(x_{i} = \TRUE\). 

Using Lemma~\ref{lem:AdamLevine} again we see that \lslice\ does not admit the
positive untwisted Whitehead double of the Borromean rings as a sublink.
Therefore, from every set of Borromean rings, at least one component does \em not \em belong to \lslice; it follows that the assignment above satisfies \(\Phi\).
\end{enumerate}
\end{proof}

\section*{Acknowledgment}
We thank Amey Kaloti and Jeremy Van Horn Morris for many helpful conversations.

\bibliographystyle{alpha}
\bibliography{links_hard}

\begin{thebibliography}{dMRST18}

\bibitem[AB09]{ab-ccma-09}
Sanjeev Arora and Boaz Barak.
\newblock {\em Computational complexity: a modern approach}.
\newblock Cambridge University Press, Cambridge, 2009.

\bibitem[AHT06]{aht-cckgs-06}
Ian Agol, Joel Hass, and William Thurston.
\newblock The computational complexity of knot genus and spanning area.
\newblock {\em Transactions of the American Mathematical Society},
  358:3821--3850, 2006.

\bibitem[dMRST18]{dmrst-eR3NPh-18}
Arnaud de~Mesmay, Yo'av Rieck, Eric Sedgwick, and Martin Tancer.
\newblock Embeddability in {$\mathbb{R}^3$} is {NP}-hard.
\newblock In {\em Proceedings of the Twenty-Ninth Annual ACM-SIAM Symposium on
  Discrete Algorithms}, pages 1316--1329. Society for Industrial and Applied
  Mathematics, 2018.
\newblock Full version on arXiv:1604.00290.

\bibitem[Hak61]{h-tn-61}
Wolfgang Haken.
\newblock Theorie der {N}ormalfl{\"a}chen.
\newblock {\em Acta Mathematica}, 105(3-4):245--375, 1961.

\bibitem[HL01]{hl-nrmnu-01}
Joel Hass and Jeffrey Lagarias.
\newblock The number of {R}eidemeister moves needed for unknotting.
\newblock {\em Journal of the American Mathematical Society}, 14(2):399--428,
  2001.

\bibitem[HLP99]{hlp-ccklp-99}
Joel Hass, Jeffrey~C. Lagarias, and Nicholas Pippenger.
\newblock The computational complexity of knot and link problems.
\newblock {\em Journal of the ACM (JACM)}, 46(2):185--211, 1999.

\bibitem[HN10]{hn-udrqnr-10}
Joel Hass and Tahl Nowik.
\newblock Unknot diagrams requiring a quadratic number of {R}eidemeister moves
  to untangle.
\newblock {\em Discrete \& Computational Geometry}, 44(1):91--95, 2010.

\bibitem[KL14]{kl-huct-14}
Louis~H Kauffman and Sofia Lambropoulou.
\newblock Hard unknots and collapsing tangles.
\newblock In {\em Introductory Lectures on Knot Theory}, 2014.

\bibitem[KS]{ks-cikloi-18}
Greg Kuperberg and Eric Samperton.
\newblock Coloring invariants of knots and links are often intractable.
\newblock Manuscript.

\bibitem[KT18]{kt-NPhnak-18}
Dale Koenig and Anastasiia Tsvietkova.
\newblock {NP}-hard problems naturally arising in knot theory.
\newblock arXiv:1602.08427, 2018.

\bibitem[Kup14]{Kuperberg}
Greg Kuperberg.
\newblock Knottedness is in {$\NP$}, modulo {GRH}.
\newblock {\em Adv. Math.}, 256:493--506, 2014.

\bibitem[{Lac}15]{l-pubrm-15}
Marc {Lackenby}.
\newblock {A polynomial upper bound on {R}eidemeister moves.}
\newblock {\em {Ann. Math. (2)}}, 182(2):491--564, 2015.

\bibitem[Lac16]{l-ecktn-16}
Marc Lackenby.
\newblock The efficient certification of knottedness and {T}hurston norm.
\newblock arXiv:1604.00290, 2016.

\bibitem[Lac17a]{l-ekt-17}
Marc Lackenby.
\newblock Elementary knot theory.
\newblock In {\em Lectures on Geometry (Clay Lecture Notes)}. Oxford University
  Press, 2017.

\bibitem[Lac17b]{l-schpl3m-17}
Marc Lackenby.
\newblock Some conditionally hard problems on links and 3-manifolds.
\newblock {\em Discrete \& Computational Geometry}, 58(3):580--595, 2017.

\bibitem[Lev12]{levine}
Adam~Simon Levine.
\newblock Slicing mixed bing--whitehead doubles.
\newblock {\em Journal of Topology}, 5(3):713--726, 2012.

\bibitem[Mur65]{murasugi}
Kunio Murasugi.
\newblock On a certain numerical invariant of link types.
\newblock {\em Transactions of the American Mathematical Society},
  117:387--422, 1965.

\bibitem[Pap94]{papadimitriou94}
Christos~H. Papadimitriou.
\newblock {\em Computational complexity}.
\newblock Addison-Wesley Publishing Company, Reading, MA, 1994.

\bibitem[Rol90]{rolfsen}
Dale Rolfsen.
\newblock {\em Knots and links}, volume~7 of {\em Mathematics Lecture Series}.
\newblock Publish or Perish, Inc., Houston, TX, 1990.
\newblock Corrected reprint of the 1976 original.

\bibitem[Sam18]{s-cce3mi-18}
Eric Samperton.
\newblock Computational complexity of enumerative 3-manifold invariants.
\newblock {\em arXiv preprint arXiv:1805.09275}, 2018.

\bibitem[Shi74]{Shibuya}
Tetsuo Shibuya.
\newblock Some relations among various numerical invariants for links.
\newblock {\em Osaka J. Math.}, 11:313--322, 1974.

\bibitem[Tro62]{trotter}
Hale~F Trotter.
\newblock Homology of group systems with applications to knot theory.
\newblock {\em Annals of Mathematics}, pages 464--498, 1962.

\end{thebibliography}

\end{document}